\documentclass[hidelinks,onefignum,onetabnum]{siamart251216}

\usepackage{amsmath,amssymb,mathtools}
\usepackage{enumitem}
\usepackage{microtype}
\usepackage{xcolor}
\usepackage{mathrsfs}
\hypersetup{colorlinks=true,linkcolor=blue!50!black,citecolor=blue!50!black,urlcolor=blue!50!black}
\newcommand{\eps}{\varepsilon}
\newcommand{\sbvto}{\xrightarrow{\mathrm{SBV}}}

\newcommand{\kap}{\kappa_\varepsilon}

\newcommand{\mres}{\mathbin{\vrule height 1.6ex depth 0pt width .13ex\vrule height .13ex depth 0pt width 1.3ex}}

\newtheorem{remark}{Remark}

\ifpdf
\hypersetup{
  pdftitle={An Example Article},
  pdfauthor={D. Doe, P. T. Frank, and J. E. Smith}
}
\fi

\begin{document}
\title{Simultaneous Sharp-Interface Limits for a Diffuse-Interface Landau--de Gennes Droplet Model\thanks{\funding{W.W was supported by the National Natural Science Foundation of China under Grant No.~12271476. L.Z. was supported by the National Natural Science Foundation of China (No. 12225102, T2321001, and 12288101) and the National Key Research and Development Program of China  2024YFA0919500.}}}
\author{Sisi Guan\thanks{Beijing International Center for Mathematical Research, Peking University, Beijing 100871, China (email: sisiguan98@bicmr.pku.edu.cn).}\and Wei Wang\thanks{School of Mathematical Sciences, Zhejiang University, Hangzhou 310058, China (email: wangw07@zju.edu.cn).}\and Lei Zhang\thanks{School of Mathematical Sciences, Beijing International Center for Mathematical Research, Center for Quantitative Biology, Center for Machine Learning Research, Institute for Artificial Intelligence, Peking University, Beijing 100871, China (zhangl@math.pku.edu.cn).}}

\headers{Simultaneous Sharp-Interface Limits}{S. Guan, W. Wang, and L. Zhang}
\maketitle

\begin{abstract}
Nematic droplet models based on diffuse interfaces involve competing microscopic transition layers whose interaction is lost if the small parameters are sent to zero successively. We study this interaction for the diffuse-interface Landau--de Gennes model of Shi et al., with phase-field thickness \(\varepsilon\) and exterior \(Q\)-tensor relaxation length \(\kappa_\varepsilon\), and show that their relative scale selects the effective sharp-interface theory. Within a regular-interface class, we establish the corresponding sharp-interface limit for every limiting ratio $r=\lim_{\varepsilon\to0}\kap/\eps\in[0,+\infty]$. Three qualitatively different regimes emerge. When \(r=+\infty\), the two transition layers separate asymptotically and the interfacial costs decouple, recovering the previously obtained iterated limit. When \(r\in(0,+\infty)\), the layers overlap and produce a genuinely coupled, \(r\)-dependent anisotropic surface density. When \(r=0\), finite energy forces the interior trace of \(Q\) to vanish on the limiting interface. The comparable-scale regime requires a genuinely coupled interfacial analysis. Thus the exterior relaxation length is not merely a regularization parameter: together with the phase-field thickness, it determines the macroscopic surface energy and anchoring law of the sharp-interface limit.
\end{abstract}

\begin{keywords}
Landau--de Gennes theory, diffuse-interface model, sharp-interface limit, liquid-crystal droplets, anchoring energy, singular perturbation
\end{keywords}

\begin{MSCcodes}
49J45, 76A15, 35B25
\end{MSCcodes}

\section{Introduction}\label{sec:intro}
Liquid crystals are states of matter intermediate between liquids and crystalline solids \cite{chandrasekhar1992liquid,stewart2004continuum,virga1994variational}. Among the various liquid-crystal phases, the nematic phase is the simplest and most widely studied. Its constituent molecules have no long-range positional order, but their long axes tend to align along locally preferred directions~\cite{deGennesProst1993,frank1958theory,onsager1949effects,oseen1933theory,wang2021modelling}. A nematic droplet is a finite region of nematic liquid crystal surrounded by an isotropic phase, with an interface separating the two phases. Its molecular orientation interacts with the interface geometry, leading to distinctive physical behaviour. In particular, in polymer-dispersed liquid-crystal systems, their electro-optical properties have been exploited in display and switching technologies \cite{doane1988polymer,west1988phase}. More broadly, they provide a natural variational setting for studying the interaction between orientational order, topological defects, and free interfaces in anisotropic materials~\cite{kaznacheev2003tactoids,kim2013morphogenesis,LeoniEtAl2017,nastishin2005optical,prinsen2003tactoids,tortora2011chiral,wang2018tactoids}.

A fundamental modeling issue for nematic droplets is how to represent the interface separating the nematic and isotropic phases. In sharp-interface formulations, the droplet is described by an unknown region, and the energy typically consists of bulk elastic contributions in the droplet together with surface anchoring energies on its boundary, subject to a volume constraint~\cite{GengLin2022,LinPoon1996,Virga1989}. Since both the orientational order and the droplet region are part of the variational problem, such formulations lead to analytically challenging free-boundary problems. Diffuse-interface models instead introduce a phase field that varies smoothly between the nematic and isotropic phases, allowing the problem to be formulated on a fixed domain without explicitly representing the free boundary \cite{ForestWangYang2012,MataGarciaCerveraCeniceros2014,WuShiZhangImproved,WuEtAlDiffuseInterface}. This raises a natural variational question: \emph{does the diffuse-interface model converge to an effective sharp-interface description as the interfacial thickness tends to zero, and what surface energy and anchoring law emerge in the limit?}


Our starting point is the improved diffuse-interface Landau--de Gennes model of Shi et al.~\cite{WuShiZhangImproved}. In this model, a phase field distinguishes the nematic and isotropic regions and resolves their diffuse interface, while an additional relaxation mechanism drives the \(Q\)-tensor toward the isotropic state outside the droplet. These two mechanisms are associated with two distinct interfacial length scales: the phase field varies across a layer of thickness \(O(\varepsilon)\), whereas the exterior \(Q\)-tensor relaxes on a scale \(O(\kappa)\). Consequently, the sharp-interface limit depends not only on $\eps,\kappa\to 0$, but also on their relative scaling.

The analysis in \cite{WuShiZhangImproved} considered the iterated limit obtained by first letting \(\varepsilon\to0\) with \(\kappa>0\) fixed and subsequently sending \(\kappa\to0\). In terms of the relative scaling, this corresponds to the regime $\kappa/\eps\to \infty$, where the two transition scales become asymptotically separated. It leaves open a more general question: \emph{what sharp-interface energy emerges when \(\varepsilon\) and \(\kappa\) vanish simultaneously, and how does this energy change when the two interfacial layers separate, overlap, or occur on comparable scales?} We address this question through a simultaneous \(\Gamma\)-convergence analysis, which allows the ratio $r_\eps=\kap/\eps$ to converge to any value in \([0,+\infty]\).

Our main result shows that these three scaling regimes lead to qualitatively different sharp-interface theories. More precisely, within the regular-interface class, we establish the \(\Gamma\)-convergence of \(E_{\varepsilon,\kappa_\varepsilon}\) to a family of sharp-interface energies covering \emph{all relative scalings of the two diffuse length scales}. When \(r_\varepsilon\to+\infty\), the two interfacial mechanisms are asymptotically separated, and their contributions become additive, recovering the surface energy obtained from the iterated limiting procedure in \cite{WuShiZhangImproved}. When \(r_\varepsilon\to\ell\in(0,+\infty)\), the two transitions occur on comparable scales and interact within the same interfacial layer, producing a \emph{genuinely coupled, \(\ell\)-dependent surface density}. When \(r_\varepsilon\to0\), the \(Q\)-tensor relaxes on a much shorter scale, and finite energy \emph{forces the interior trace of \(Q\) to vanish on the limiting interface}. Thus, the exterior relaxation length is not merely a regularization parameter: together with the phase-field thickness, it selects the effective surface energy and anchoring law.

The main analytical difficulties arise from three features of the model. First, the elastic energy controls only the weighted quantity \(\phi_\varepsilon\nabla Q_\varepsilon\), so small low-phase sets may carry large gradients and prevent uniform local \(H^1\) control of \(Q_\varepsilon\). To overcome this degeneracy, we combine the high-phase restriction strategy of \cite{lin2012phase} with radial truncation and a diagonal passage in the truncation level. This yields uniformly bounded tensor-valued \(SBV^2\) approximations and, crucially, \emph{strong \(L^1\) compactness of the original, untruncated sequence}, without assuming a uniform \(L^\infty\) bound or local weak \(H^1\) compactness. \emph{The novelty lies in combining superlevel-set restriction with tensor-valued truncation and using the resulting compactness in the transition-measure analysis}.

The second obstacle appears in the critical regime \(r_\varepsilon\to\ell\in(0,+\infty)\), where the phase transition and exterior tensor relaxation contribute to the same transition layer. The appropriate Modica-type primitive depends on \(Q_\varepsilon\), and differentiating it creates an additional \(\nabla Q_\varepsilon\)-term precisely where direct \(H^1_{\mathrm{loc}}\) compactness is unavailable. The truncated-\(SBV^2\) framework supplies the distributional compactness needed to identify \(Q_\varepsilon D\Phi(\phi_\varepsilon)\), control this additional term, and derive the anisotropic, \(\ell\)-dependent interfacial density.

Finally, small high-phase incursions into the exterior locally destroy the coercivity of the void potential. We prove a \emph{one-dimensional capacity comparison lemma} showing that, among measurable exceptional sets of prescribed measure, the least favorable configuration is an interval adjacent to the interface. This geometry-independent estimate recovers the sharp exterior relaxation cost without requiring pointwise convergence of traces and may be useful for other singularly perturbed energies with localized loss of coercivity.

The remainder of the paper is organized as follows. Section \ref{sec:model} introduces the diffuse-interface energy, the three relative-scaling regimes, and the corresponding sharp-interface functionals, and states the main result. Section \ref{sec:preliminaries} develops the compactness framework, including the truncated-\(SBV^2\) construction, and records the geometric preliminaries near the limiting interface. Section~\ref{sec:lower-bound} proves the lower bounds through transition-measure arguments and the one-dimensional capacity estimate. Section~\ref{sec:upper-bound} constructs boundary-dependent recovery sequences and proves the upper bounds. Section~\ref{sec:conclusion} summarizes the results and discusses possible extensions.

\section{Model and main results}\label{sec:model}
\subsection{Diffuse-interface energy}
Throughout the paper, $\Omega\subset\mathbb{R}^{3}$ is a bounded Lipschitz domain, $V_{0}\in(0,|\Omega|)$ is prescribed, and $\lambda,w_p,w_v,\omega>0$. We let $\varepsilon\downarrow0$ and prescribe $\kappa_{\varepsilon}\downarrow0$ as a function of $\varepsilon$. We denote by
\[
    \mathcal{S}_{0}:=\left\{Q\in\mathbb{R}^{3\times3}:Q^{T}=Q,\ \operatorname{tr}Q=0\right\}
\]
the space of symmetric traceless tensors.

For $\varepsilon>0$, we consider $(Q_{\varepsilon},\phi_{\varepsilon})\in H^{1}(\Omega;\mathcal{S}_{0})\times H_{0}^{1}(\Omega)$. Here $Q_{\varepsilon}$ describes the orientational order, while $\phi_{\varepsilon}$ is a phase field with $\phi_{\varepsilon}\approx1$ in the nematic droplet and $\phi_{\varepsilon}\approx0$ in the exterior isotropic phase. The boundary condition $\phi_{\varepsilon}\in H_{0}^{1}(\Omega)$ imposes the exterior phase on $\partial\Omega$, consistently with the assumption that the limiting droplet is compactly contained in $\Omega$.

The diffuse-interface energy is
\begin{equation}\label{eq:diffuse-energy}
    E_{\varepsilon,\kappa_{\varepsilon}}(Q_{\varepsilon},\phi_{\varepsilon}):=E_{\varepsilon}^{\mathrm{bulk}}+E_{\varepsilon}^{\mathrm{mix}}+E_{\varepsilon}^{\mathrm{void}},
\end{equation}
where the three contributions are described below. We consider the improved diffuse-interface Landau--de Gennes droplet model introduced by Shi et al. \cite{WuShiZhangImproved}; we recall only the ingredients needed for the asymptotic analysis below.

\paragraph{The bulk contribution}
We set
\[
    E_{\varepsilon}^{\mathrm{bulk}}:=\int_{\Omega}\phi_{\varepsilon}^{2}\left(|\nabla Q_{\varepsilon}|^{2}+\frac{1}{2\lambda}F_{b}(Q_{\varepsilon})\right)\,dx,
\]
where
\[
    F_{b}(Q):=\frac{a}{2}\operatorname{tr}(Q^{2})-\frac{b}{3}\operatorname{tr}(Q^{3})+\frac14\bigl(\operatorname{tr}(Q^{2})\bigr)^{2}-B_{\min}.
\]
We assume that $b>0$ and $a<b^{2}/27$, and choose $B_{\min}$ so that $F_{b}\geq0$ and
\[
    F_{b}^{-1}(0)=\mathcal{N},
\]
where
\[
    \mathcal{N}:=\left\{s_{+}\left(n\otimes n-\frac13 I\right):n\in\mathbb{S}^{2}\right\},\qquad s_{+}=\frac{b+\sqrt{b^{2}-24a}}{4}.
\]
Thus, $\mathcal{N}$ is the uniaxial nematic well of the Landau--de Gennes bulk potential.

The factor \(\phi_\varepsilon^2\) localizes the Landau--de Gennes energy to the nematic region and, importantly for the analysis below, yields only weighted control of \(\nabla Q_\varepsilon\).

\paragraph{The mixing contribution}
We set
\[
    E_{\varepsilon}^{\mathrm{mix}}:=w_{p}\int_{\Omega}\left(\varepsilon^{-1}W(\phi_{\varepsilon})+\varepsilon|\nabla\phi_{\varepsilon}|^{2}+\omega\varepsilon\left|\left(Q_{\varepsilon}+\frac{s_{+}}{3}I\right)\nabla\phi_{\varepsilon}\right|^{2}\right)\,dx,
\]
where \(W(t):=t^{2}(1-t)^{2}\) is the standard double-well potential. The first two terms constitute the Modica--Mortola energy and generate a diffuse interface of thickness \(O(\varepsilon)\), while the last term is the anisotropic contribution introduced in \cite{WuShiZhangImproved} to encode planar anchoring. For \(Q=s_{+}(n\otimes n-\frac13 I)\in\mathcal N\),
\[
    \left|\left(Q+\frac{s_{+}}{3}I\right)\nabla\phi\right|^{2}=s_{+}^{2}(n\cdot\nabla\phi)^{2},
\]
and hence this contribution favors \(n\cdot\nu=0\) in the sharp-interface limit. Its dependence on \(Q_\varepsilon\) within the \(O(\varepsilon)\) transition layer is responsible for the anisotropic \(Q\)-dependent surface density appearing below.

\paragraph{The exterior-relaxation contribution}
Finally, we set
\[
    E_{\varepsilon}^{\mathrm{void}}:=\frac{w_{v}}{2}\int_{\Omega}\left(\kappa_{\varepsilon}|\nabla Q_{\varepsilon}|^{2}+\kappa_{\varepsilon}^{-1}(1-\phi_{\varepsilon})^{2}|Q_{\varepsilon}|^{2}\right)\,dx.
\]
This term drives \(Q_{\varepsilon}\) toward the isotropic state \(Q=0\) in the exterior phase and produces a relaxation layer of thickness \(O(\kappa_{\varepsilon})\). The interaction between this layer and the \(O(\varepsilon)\) phase-field transition layer is a central feature of the present analysis. In particular, the limiting interfacial behavior depends on the relative scale
\[
r_{\varepsilon}:=\frac{\kappa_{\varepsilon}}{\varepsilon},
\]
leading to the distinct sharp-interface regimes introduced in the next subsection.

We impose the admissibility constraint
\[
    \mathcal{A}_{\varepsilon}:=\left\{(Q_{\varepsilon},\phi_{\varepsilon})\in H^{1}(\Omega;\mathcal{S}_{0})\times H_{0}^{1}(\Omega):\ 0\leq\phi_{\varepsilon}\leq1\ \text{a.e. in }\Omega,\quad\int_{\Omega}\phi_{\varepsilon}\,dx=V_{0}\right\}.
\]
The pointwise constraint reflects the interpretation of $\phi_{\varepsilon}$ as a phase fraction, while the integral constraint fixes the droplet volume.

\subsection{Relative scaling and effective interfacial densities}
\label{subsec:relative-scaling}

The simultaneous limit is governed by the relative scale
\[
    r_\eps:=\frac{\kappa_\eps}{\eps}.
\]
After passing to a subsequence if necessary, we assume throughout that
\[
    r_\eps\to r\in[0,+\infty].
\]
The regimes \(r=0\), \(r\in(0,+\infty)\), and \(r=+\infty\) correspond, respectively, to \(\kappa_\eps\ll\eps\), \(\kappa_\eps\sim\eps\), and \(\eps\ll\kappa_\eps\).

For later use, set
\begin{equation}\label{eq:anchoring-matrix}
    A(q):=q+\frac{s_+}{3}I,
\end{equation}
and define, for \(q\in\mathcal S_0\) and \(\xi\in\mathbb R^3\),
\[
    F(q,\xi):=\left(|\xi|^2+\omega|A(q)\xi|^2\right)^{1/2},\qquad G(q,\nu):=F(q,\nu),\quad \nu\in\mathbb S^2.
\]
In particular,
\begin{equation}\label{eq:anisotropic-norm-growth}
    F(q,\xi)\le C(1+|q|)|\xi|.
\end{equation}

We now define the transition-layer contribution \(m_r\). For
\(\ell\in(0,+\infty)\), let
\[
    m_\ell(q,\nu):=2\sqrt{w_p}\,G(q,\nu)\int_0^1\left(w_pW(s)+\frac{w_v}{2\ell}|q|^2(1-s)^2\right)^{1/2}\,ds.
\]
At the two endpoint regimes, set
\[
    m_\infty(q,\nu):=\frac{w_p}{3}G(q,\nu),
\]
and
\[
    m_0(q,\nu):=
    \begin{cases}
    \displaystyle
    \frac{w_p}{3}
    \left(1+\frac{\omega s_+^2}{9}\right)^{1/2},
        & q=0,\\[2mm]
    +\infty, & q\neq0.
    \end{cases}
\]
The effective interfacial density is then defined uniformly by
\begin{equation}\label{eq:gamma-r}
    \gamma_r(q,\nu):=m_r(q,\nu)+\frac{w_v}{2}|q|^2.
\end{equation}

These three cases reflect the interaction of the two diffuse layers. For \(r=+\infty\), the phase-field transition and the exterior \(Q\)-relaxation are asymptotically separated, and their costs add. For \(r=\ell\in(0,+\infty)\), the two mechanisms interact within the same transition layer, producing an \(\ell\)-dependent coupled surface density. For \(r=0\), finite interfacial energy requires \(q=0\); consequently, finite limiting energy enforces
\[
    Q^-=0\qquad \mathcal H^2\text{-a.e. on }\partial A.
\]

For each fixed \(r\in(0,+\infty]\), the map \((q,\nu)\mapsto\gamma_r(q,\nu)\) is continuous and satisfies
\[
    0\le \gamma_r(q,\nu)\le C_r(1+|q|^2).
\]
Moreover, for fixed \((q,\nu)\), \(\gamma_\ell(q,\nu)\) is nonincreasing in \(\ell\), with $\gamma_\ell(q,\nu)\to\gamma_\infty(q,\nu)$ as $\ell\to+\infty$, while, as \(\ell\downarrow0\), $\gamma_\ell(0,\nu)\to\gamma_0(0,\nu)$ and $\gamma_\ell(q,\nu)\to+\infty$ for $q\neq 0$. Thus the critical densities interpolate consistently between the two endpoint regimes.

\subsection{Regular configurations and main results}
We formulate the sharp-interface result on the following regular class:
\[
    \mathcal A_0^{\rm reg}:=\left\{(Q,A):
    \begin{array}{l}
        A\Subset\Omega\ \text{is open with }C^2\text{ boundary},\\
        |A|=V_0,\qquad Q\in H^1(A;\mathcal{S}_0)
    \end{array}
    \right\}.
\]
Whenever $(Q,A)\in\mathcal A_0^{\rm reg}$, we identify $Q$ with its zero extension to $\Omega$. In particular, $Q\in L^2(\Omega;\mathcal{S}_0)$ and $Q=0$ a.e. in $A^c$. The trace theorem yields a well-defined interior trace
\[
    Q^-\in H^{1/2}(\partial A;\mathcal S_0) \subset L^2(\partial A;\mathcal S_0).
\]
We denote by $\nu_A$ the exterior unit normal to $\partial A$.

For $(Q,A)\in\mathcal A_0^{\rm reg}$ and $r\in[0,+\infty]$, define
\begin{equation}\label{eq:E-limit}
    E_0^r(Q,A):=\int_A\left(|\nabla Q|^2+\frac{1}{2\lambda}F_b(Q)\right)\,dx+\int_{\partial A}\gamma_r(Q^-,\nu_A)\,d\mathcal H^2.
\end{equation}
This is the regime-dependent sharp-interface functional identified in the simultaneous limit.

We write $(Q_\eps,\phi_\eps)\to(Q,A)$ if
\begin{align*}
    \phi_\eps\to\chi_A\quad\text{strongly in }L^1(\Omega),\qquad Q_\eps\rightharpoonup Q
    \quad\text{weakly in }L^2(\Omega;\mathcal{S}_0),
\end{align*}
and
\[
    Q_\eps\to0 \quad\text{strongly in }L^2(A^c;\mathcal{S}_0).
\]

\begin{theorem}\label{thm:main}
Let $\eps,\kap\to0$ with $r_\eps=\kappa_\eps/\eps\to r\in [0,+\infty]$. Then, within the regular class $\mathcal A_0^{\rm reg}$, the sharp-interface functional $E_0^r$ defined in \eqref{eq:E-limit} provides matching lower and upper bounds in the sense stated below.
\begin{enumerate}
\item[\rm (i)]
Let $(Q_\eps,\phi_\eps)\in\mathcal A_\eps$ be an energy-bounded sequence such that $(Q_\eps,\phi_\eps)\to (Q,A)$ for some $(Q,A)\in\mathcal A_0^{\rm reg}$. Then
\[
    \liminf_{\eps\to0}E_{\eps,\kappa_\eps}(Q_\eps,\phi_\eps)\geq E_0^r(Q,A).
\]

\item[\rm (ii)]
For every $(Q,A)\in\mathcal A_0^{\rm reg}$ satisfying $E_0^r(Q,A)<+\infty$, there exists $(Q_\eps,\phi_\eps)\in\mathcal A_\eps$ such that
\[
    \phi_\eps\to\chi_A\quad\text{strongly in }L^1(\Omega),\qquad Q_\eps\to Q \quad\text{strongly in }L^2(\Omega;\mathcal{S}_0),
\]
and
\[
    \limsup_{\eps\to0}E_{\eps,\kappa_\eps}(Q_\eps,\phi_\eps)\leq E_0^r(Q,A).
\]
\end{enumerate}

Hence the effective sharp-interface energy is selected by the limiting ratio $r=\lim_{\eps\to0}\kappa_\eps/\eps$.
\end{theorem}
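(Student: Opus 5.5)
The plan has three parts: the bulk term in the lower bound, a one-dimensional blow-up analysis of the transition layer for each regime $r$, and an explicit layered recovery sequence.

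\textbf{Lower bound, bulk term.} I would split the energy into a bulk part and an interfacial part, localized respectively in $A$ and in a tubular neighbourhood $U_\delta=\{x:\operatorname{dist}(x,\partial A)<\delta\}$. On compact subsets $K\Subset A$ we have $\phi_\eps\to1$ in $L^1$. Restricting to superlevel sets $\{\phi_\eps>1-\eta\}$ and truncating $Q_\eps$ radially at level $M$ gives uniformly bounded $SBV^2$ approximants. Ambrosio's compactness then yields $Q=\lim Q_\eps$ and
\[
\liminf\int_K\phi_\eps^2\bigl(|\nabla Q_\eps|^2+\tfrac1{2\lambda}F_b(Q_\eps)\bigr)\ge\int_K\bigl(|\nabla Q|^2+\tfrac1{2\lambda}F_b(Q)\bigr).
\]
Letting $\eta\to0$, $M\to\infty$ diagonally and $K\uparrow A$ handles the bulk term.

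\textbf{Lower bound, interfacial term.} Since $\partial A$ is $C^2$, I would use normal coordinates $x=y+t\nu_A(y)$ with $y\in\partial A$, $|t|<\delta$, whose Jacobian is $1+O(\delta)$. By Fubini it then suffices to prove a one-dimensional lower bound along $\mathcal H^2$-a.e.\ normal line, with $q$ replaced by the interior value of $Q_\eps$ near $t=0^-$; Fatou then integrates it over $\partial A$. The key 1D ingredients differ by regime.
\begin{itemize}
\item[(a)] For $r=\ell\in(0,\infty)$, rescale $t=\eps s$. The relevant integrand is
\[
w_p\bigl(W(\phi)/\eps+\eps|\phi'|^2G(Q,\nu)^2\bigr)+\tfrac{w_v}{2\ell\eps}(1-\phi)^2|Q|^2 .
\]
I would apply Young's inequality with the $Q$-dependent Modica primitive $\Phi_Q(\phi)=2\sqrt{w_p}\,G(Q,\nu)\int_0^\phi\sqrt{w_pW+\frac{w_v}{2\ell}|Q|^2(1-s)^2}$. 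The extra term coming from $Q'\cdot\partial_Q\Phi_Q$ is controlled by $\eps\int|Q'|^2$; this is small on the high-phase side and is absorbed via the truncated-$SBV^2$ compactness on the low-phase side.
\item[(b)] The exterior relaxation contributes $\frac{w_v}{2}|q|^2$ through $\int(\kappa|Q'|^2+\kappa^{-1}(1-\phi)^2|Q|^2)\ge|Q|^2$ evaluated at the point where $\phi$ drops. The difficulty is that high-phase incursions into the exterior destroy coercivity. Here I would invoke the one-dimensional capacity comparison announced in the introduction: among exceptional sets of given measure, an interval adjacent to the interface is worst, and its measure tends to $0$, so the full cost $\frac{w_v}{2}|q|^2$ is recovered.
\item[(c)] For $r=\infty$ the two layers separate. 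The Modica bound with weight $G(q,\nu)$, where $Q\approx q$ across the $\eps$-layer, gives $\frac{w_p}{3}G$, and the $\kappa$-layer adds $\frac{w_v}{2}|q|^2$.
\item[(d)] For $r=0$, the term $\kappa^{-1}(1-\phi)^2|Q|^2$ across the $\eps$-layer has size $\eps/\kappa\cdot|q|^2\to\infty$ unless $q=0$. For $q=0$, the anisotropic factor reduces to $|A(0)\nu|^2=s_+^2/9$, which gives $m_0$.
\end{itemize}
I would identify the traces by weak convergence, using lower semicontinuity in $q$ of the density, which is convex in the relevant variable.

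\textbf{Upper bound.} I would build recovery sequences explicitly in normal coordinates. First approximate $Q$ by smooth $Q$ up to $\partial A$, which is allowed by density and continuity of $\gamma_r$ with quadratic growth. Then take $\phi_\eps(x)=\psi(d(x)/\eps)$ with $\psi$ the optimal profile of the relevant 1D problem, and set $Q_\eps(x)=Q(\pi(x))\,\theta(d(x))$, where $\theta$ is the optimal exponential decay $e^{-d/\kappa}$ on the exterior side. In the regime $r=\ell$ the profiles $(\phi,\theta)$ are taken jointly optimal for the coupled 1D problem: $\phi$ solves the equipartition ODE with the coupled potential. Volume is fixed by a small normal shift of $\partial A$, and $\phi_\eps\in H^1_0$ by truncating the profile at distance $\sqrt\eps$. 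Finally a diagonal argument in the approximation parameters closes the bound.

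\textbf{Main obstacle.} I expect the critical regime (a) in the lower bound to be hardest. There the $\nabla Q_\eps$ error from the $Q$-dependent primitive must be controlled on low-phase sets without $H^1$ bounds. I would first try to bound it by $\sqrt{\eps\int|Q_\eps'|^2}\cdot\sqrt{\eps^{-1}\int_{\text{layer}}(\ldots)}$ using the void term $\kappa|\nabla Q|^2$ with $\kappa\sim\eps$. This is plausible because $\kappa|\nabla Q|^2$ is exactly what is available at scale $\eps$.
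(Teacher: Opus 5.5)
Your bulk lower bound is essentially the paper's, apart from choosing the level $t_\varepsilon$ by the coarea formula so that $P(\{\phi_\varepsilon>t_\varepsilon\};\Omega)$ stays bounded. The interfacial lower bound, however, has a genuine gap.

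\emph{The slicing reduction fails for the anisotropic term.} Your 1D integrand $\varepsilon G(Q,\nu_A)^2|\partial_t\phi|^2+\varepsilon^{-1}W(\phi)$ on normal lines presupposes $F(Q_\varepsilon,\nabla\phi_\varepsilon)\ge G(Q_\varepsilon,\nu_A)|\partial_t\phi_\varepsilon|$, and this is false. Write $\nabla\phi=\partial_t\phi\,\nu_A+v$ with $v\perp\nu_A$. Then $F(Q,\nabla\phi)^2-G(Q,\nu_A)^2|\partial_t\phi|^2=|v|^2+\omega|A(Q)v|^2+2\omega\,\partial_t\phi\,A(Q)\nu_A\cdot A(Q)v$. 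This is negative for suitable small $v$ whenever $A(Q)^2\nu_A\nparallel\nu_A$, e.g.\ for $Q\in\mathcal N$ with director oblique to $\nu_A$. Slightly tilted or wrinkled layers converging to $\partial A$ therefore cost less on individual normal slices than your 1D functional. The bound $\int_{\partial A}G$ is recovered only after averaging over directions, by convexity of $\xi\mapsto F(q,\xi)$.

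\emph{The trace identification also fails.} Replacing $Q_\varepsilon$ by one value $q$ per slice and concluding by weak convergence plus convexity in $q$ does not work, for two reasons. First, in the critical regime only $\kappa_\varepsilon|\nabla Q_\varepsilon|^2$ controls $Q_\varepsilon$ where $\phi_\varepsilon$ is small, so $Q_\varepsilon$ may move by $O(1)$ across the $\varepsilon$-layer. Second, $\gamma_\ell(\cdot,\nu)$ is not convex in general. The factor $G$ is minimized where $A(q)\nu=0$, the integral factor at $q=0$. Along $q=t\,s_+(n\otimes n-\frac13 I)$ with $n\perp\nu$, the second $t$-derivative at $t=\frac12$ is negative for large $\omega$.

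\emph{The missing idea.} Write the mixing energy as the Euclidean total variation of $\Lambda_\varepsilon=(D\Phi(\phi_\varepsilon),\sqrt\omega\,A(Q_\varepsilon)D\Phi(\phi_\varepsilon))$ and use weak-$*$ lower semicontinuity of total variation. Then identify the limit exactly, $Q_\varepsilon D\Phi(\phi_\varepsilon)\to c_0Q^-D\chi_A$: integrate by parts and prove $\Phi(\phi_\varepsilon)\nabla Q_\varepsilon\to c_0\chi_A\nabla Q$ through $\nabla V_\varepsilon^M\rightharpoonup\chi_A\nabla T_M(Q)$. This, not slicing, is where the truncated $SBV^2$ compactness is needed. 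The same identity makes $r=0$ rigorous via $|Q_\varepsilon D\Phi(\phi_\varepsilon)|(\Omega)\le C(\kappa_\varepsilon/\varepsilon)^{1/2}$, whereas your (d) presupposes $Q_\varepsilon\approx q$ across the layer.

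\emph{In the critical regime your control of the $\nabla Q_\varepsilon$-term does not close.} Via the void gradient, $\varepsilon\int|Q_\varepsilon'|^2=r_\varepsilon^{-1}\kappa_\varepsilon\int|Q_\varepsilon'|^2=O(1)$, so you get an $O(1)$ error, which is useless in a lower bound. The $SBV^2$ approximants live on $\{\phi_\varepsilon>t_\varepsilon\}$, so they cannot absorb anything on the low-phase side. The key fact is that the $q$-derivative of the primitive vanishes at low phase, $|D_qH_{\varepsilon,\eta}(q,s)|\le C(s-\eta)_+$. The extra term is then bounded by $C\phi_\varepsilon|\nabla Q_\varepsilon|$ and is controlled by the \emph{weighted elastic} energy. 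Since $F(q,\xi)\le C(1+|q|)|\xi|$, absorbing it needs the coercivity of $\phi_\varepsilon^2F_b(Q_\varepsilon)$ with $\phi_\varepsilon\ge\eta$. That is why the primitive is cut at level $\eta$, giving $\widetilde{\mathcal T}_{\varepsilon,\eta,\theta}(U)\ge2\sqrt{w_p}|\Lambda_{\varepsilon,\eta}|(U)-C_{\eta,\theta}|U|$.

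\emph{Double counting between (a) and (b).} You leave this open, and the same cut resolves it. A primitive starting at $0$ consumes the void potential wherever $\phi_\varepsilon>0$, possibly the whole exterior, leaving nothing for the capacity estimate. The paper assigns $\{\phi_\varepsilon>\eta\}$ plus a fraction $\theta$ of the elastic density to the coupled layer. It assigns $\{\phi_\varepsilon\le\eta\}$ plus the fraction $1-\theta$ to the capacity comparison lemma, and then sends $\eta\downarrow0$ using $m_\ell^\eta\uparrow m_\ell$.

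\emph{Recovery sequence.} The phase profile must vanish identically beyond an $O(\varepsilon)$ layer and depend Lipschitz-continuously on $(Q^-(y),\nu_A(y))$. Across that layer you need $Q_\varepsilon\equiv Q^-(y)$, with the $\kappa_\varepsilon$-decay starting exactly where $\phi_\varepsilon$ vanishes. Holding $Q_\varepsilon=q$ out to distance $\sqrt\varepsilon$ costs $\sim|q|^2\sqrt\varepsilon/\kappa_\varepsilon$ of void energy, which diverges if $\kappa_\varepsilon\ll\sqrt\varepsilon$. Decaying where $\phi_\varepsilon>0$ instead costs elastic energy $\gtrsim\phi_\varepsilon^2|q|^2/\kappa_\varepsilon$.
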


\section{Compactness and Geometric Preliminaries}
\label{sec:preliminaries}
For $s\in [0,1]$, define $\Phi(s):=\int_0^s \sqrt{W(t)}\mathrm{d}t$, then $c_0=\Phi(1)=1/6$.

\subsection{Compactness of bounded-energy sequences}
\label{sec:compactness}
\begin{proposition}[Phase-field compactness]
\label{prop:compactness}
Let $\eps,\kap\to0$ and let $(Q_\eps,\phi_\eps)\in\mathcal{A}_\eps$ be an energy-bounded sequence. Then, after passing to a subsequence, there exist a set $A\subset\Omega$ of finite perimeter, with $|A|=V_0$, and a tensor field $Q\in L^2(\Omega;\mathcal{S}_0)$ with $Q=0$ a.e. in $A^c$, such that
\begin{align}
    &\phi_\varepsilon\to\chi_A\quad \text{strongly in }L^1(\Omega),\label{eq:phase-field-L1}\\
    &Q_\varepsilon\rightharpoonup Q\quad \text{weakly in }L^2(\Omega; \mathcal{S}_0),\qquad  Q_\varepsilon\to0\quad\text{strongly in }L^2(A^c;\mathcal S_0).\label{eq:tensor-L2}
\end{align}
Moreover,
\begin{align}\label{eq:Phi-phase-L1}
    \Phi(\phi_\eps)\to c_0\chi_A \qquad\text{strongly in }L^1(\Omega),
\end{align}
and
\begin{equation}\label{eq:phase-transition-measure}
    D\Phi(\phi_\eps) \stackrel{*}{\rightharpoonup} c_0D\chi_A \qquad \text{in }\mathcal M(\Omega;\mathbb R^3).
\end{equation}
\end{proposition}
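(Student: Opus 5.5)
The plan follows the standard Modica--Mortola compactness argument, applied to the mixing term alone, together with elementary estimates from the void term for the tensor field. Let $M:=\sup_\eps E_{\eps,\kap}(Q_\eps,\phi_\eps)<\infty$. Since every term of the energy is nonnegative, we get $w_p\int_\Omega(\eps^{-1}W(\phi_\eps)+\eps|\nabla\phi_\eps|^2)\,dx\le M$. By Young's inequality and the chain rule ($\phi_\eps\in H^1$, $\Phi$ is $C^1$ on $[0,1]$),
\[
|D\Phi(\phi_\eps)|(\Omega)=\int_\Omega\sqrt{W(\phi_\eps)}\,|\nabla\phi_\eps|\,dx\le \tfrac12\int_\Omega\bigl(\eps^{-1}W(\phi_\eps)+\eps|\nabla\phi_\eps|^2\bigr)dx\le \tfrac{M}{2w_p}.
\]
Since $0\le\Phi(\phi_\eps)\le c_0$, the family $\Phi(\phi_\eps)$ is bounded in $BV(\Omega)$. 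By BV compactness on the Lipschitz domain $\Omega$, a subsequence satisfies $\Phi(\phi_\eps)\to v$ in $L^1$ and a.e.

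Next I identify $v$. The bound $\int W(\phi_\eps)\le M\eps/w_p\to0$ gives $W(\phi_\eps)\to0$ in $L^1$, so along a further subsequence $\phi_\eps\to\{0,1\}$-valued limits a.e. Since $\Phi$ is a continuous, strictly increasing bijection of $[0,1]$ onto $[0,c_0]$, we have $\phi_\eps=\Phi^{-1}(\Phi(\phi_\eps))\to\Phi^{-1}(v)$ a.e. Hence $\Phi^{-1}(v)=\chi_A$ for a measurable set $A$, and $v=c_0\chi_A$. Dominated convergence ($0\le\phi_\eps\le1$) gives \eqref{eq:phase-field-L1} and \eqref{eq:Phi-phase-L1}, and the volume constraint passes to the limit to give $|A|=V_0$. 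Lower semicontinuity of total variation gives $c_0 P(A;\Omega)\le\liminf|D\Phi(\phi_\eps)|(\Omega)<\infty$, so $A$ has finite perimeter. For \eqref{eq:phase-transition-measure}, $D\Phi(\phi_\eps)$ is bounded in $\mathcal M(\Omega;\mathbb R^3)$. For $\varphi\in C_c^\infty(\Omega;\mathbb R^3)$ we have $\langle D\Phi(\phi_\eps),\varphi\rangle=-\int\Phi(\phi_\eps)\operatorname{div}\varphi\to-c_0\int\chi_A\operatorname{div}\varphi=\langle c_0D\chi_A,\varphi\rangle$. Density of $C_c^\infty$ in $C_0$, together with the uniform mass bound, upgrades this to weak-* convergence of the whole subsequence, since the limit is identified uniquely.

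For the tensor field, the void term gives $\int(1-\phi_\eps)^2|Q_\eps|^2\le 2M\kap/w_v\to0$. To get an $L^2$ bound I will combine this with the bulk term. On $\{\phi_\eps\ge1/2\}$, the quartic growth $F_b(Q)\ge c|Q|^4-C$ and $\int\phi_\eps^2F_b(Q_\eps)\le 2\lambda M$ give $\int_{\{\phi_\eps\ge 1/2\}}|Q_\eps|^4\le C$. On $\{\phi_\eps<1/2\}$ we have $|Q_\eps|^2\le 4(1-\phi_\eps)^2|Q_\eps|^2$. Together these make $Q_\eps$ bounded in $L^2(\Omega)$, so a subsequence converges weakly to some $Q$ in $L^2$. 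On $A^c$, write
\[
\int_{A^c}|Q_\eps|^2\le 4\int_{A^c\cap\{\phi_\eps<1/2\}}(1-\phi_\eps)^2|Q_\eps|^2+\int_{A^c\cap\{\phi_\eps\ge1/2\}}|Q_\eps|^2.
\]
The first term tends to zero. The second is bounded by $\|Q_\eps\|_{L^4(\{\phi_\eps\ge1/2\})}^2\,|A^c\cap\{\phi_\eps\ge 1/2\}|^{1/2}$, which tends to zero because $\phi_\eps\to0$ in $L^1(A^c)$ and Chebyshev's inequality applies. Hence $Q_\eps\to0$ strongly in $L^2(A^c)$. Since weak limits of restrictions coincide, $Q=0$ a.e. in $A^c$, which gives \eqref{eq:tensor-L2}.

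The main obstacle I expect is the uniform $L^2$ bound on $Q_\eps$ in the transition region, since the gradient control is only weighted. The argument above avoids gradients entirely by splitting at the level $1/2$ and using the quartic coercivity of $F_b$ there. I need to check that $F_b(Q)\ge c|Q|^4-C$ indeed follows from the quartic term $\tfrac14(\operatorname{tr}Q^2)^2$ dominating the cubic and quadratic terms. If it does, the whole proposition follows without invoking the truncated-$SBV^2$ machinery, which I take to be needed only later, for strong compactness inside $A$.
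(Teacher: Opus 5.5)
Your proposal is correct and follows essentially the same route as the paper. Both arguments use the Modica--Mortola $BV$ bound on $\Phi(\phi_\eps)$, then split at $\{\phi_\eps\ge 1/2\}$, using the quartic coercivity of $F_b$ on that set and the void term on its complement to get the $L^2$ bound, and finish with H\"older plus Chebyshev on $A^c\cap\{\phi_\eps\ge\cdot\}$ for the exterior strong convergence. The paper uses a general level $\eta$ in that last step, but your fixed level $1/2$ works equally well. The coercivity you were unsure about does hold: since $\operatorname{tr}(Q^2)=|Q|^2$ and $|\operatorname{tr}(Q^3)|\le|Q|^3$, Young's inequality gives $F_b(Q)\ge\tfrac18|Q|^4-C$.
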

\begin{proof}
The phase-field energy yields the uniform BV bound $$\sup_{\eps>0}|D\Phi(\phi_\eps)|(\Omega)<+\infty,$$ and hence the standard Modica–Mortola compactness argument \cite{modica1977gamma,modica1987gradient,sternberg1988effect} gives $\phi_\eps\to\chi_A$ strongly in $L^1(\Omega)$ for a set $A$ of finite perimeter. The volume constraint gives $|A|=V_0$. Since $\Phi$ is continuous on $[0,1]$, $\Phi(\phi_\eps)\to c_0\chi_A$ strongly in $L^1(\Omega)$. The uniform $BV$ bound then implies $D\Phi(\phi_\eps) \stackrel{*}{\rightharpoonup} c_0D\chi_A.$

It remains to obtain compactness for $Q_\eps$. The bulk potential is coercive: there exist $c_b,C_b>0$ such that
\begin{equation}\label{eq:bulk-potential-coercivity}
    F_b(q)\geq c_b|q|^4-C_b \qquad\text{for every }q\in \mathcal{S}_0.
\end{equation}
Since $0\leq\phi_\varepsilon\leq1$, the energy bound and \eqref{eq:bulk-potential-coercivity} imply
\begin{equation}\label{eq:weighted-tensor-L4}
    \sup_{\varepsilon>0}\int_\Omega\phi_\varepsilon^2|Q_\varepsilon|^4\,dx<+\infty.
\end{equation}

Set $H_\eps:=\{\phi_\eps\geq\tfrac12\}$ and $L_\eps:=\{\phi_\eps<\tfrac12\}$. On $H_\eps$, the bulk-energy bound and $\phi_\eps^2\geq1/4$ imply
\[
    \int_{H_\eps}|Q_\eps|^4\,dx\leq C.
\]
On $L_\eps$, the void-potential bound gives
\[
    \int_{L_\eps}|Q_\eps|^2\,dx \leq4\int_\Omega(1-\phi_\eps)^2|Q_\eps|^2\,dx \leq C\kappa_\eps.
\]
Thus $Q_\eps$ is bounded in $L^2(\Omega)$ and, along a subsequence, $Q_\eps\rightharpoonup Q$ weakly in $L^2(\Omega;\mathcal{S}_0)$ for some $Q\in L^2(\Omega;\mathcal{S}_0)$.

We finally prove the strong convergence in the exterior phase. Fix $\eta\in(0,1)$. We decompose
\[
    \int_{A^c}|Q_\eps|^2\,dx =\int_{A^c\cap\{\phi_\eps<\eta\}}|Q_\eps|^2\,dx+\int_{A^c\cap\{\phi_\eps\geq\eta\}}|Q_\eps|^2\,dx.
\]
The first term is bounded by $C\kappa_\eps/(1-\eta)^2$ and hence vanishes. Coercivity of $F_b$ and $\phi_\eps\geq\eta$ give
\[
    \int_{\{\phi_\eps\geq\eta\}}|Q_\eps|^4\,dx \leq C_\eta.
\]
Since
\[
    |A^c\cap\{\phi_\eps\geq\eta\}|\leq\frac1\eta\int_{A^c}\phi_\eps\,dx \to0,
\]
H\"older's inequality yields
\[
    \int_{A^c\cap\{\phi_\eps\geq\eta\}}|Q_\eps|^2\,dx \to0.
\]
Therefore,
\[
    Q_\eps\to0 \qquad\text{strongly in }L^2(A^c;\mathcal{S}_0).
\]
In particular, the weak limit satisfies $Q=0$ a.e. in $A^c$.
\end{proof}

For later use, we introduce the notation associated with the truncation-and-restriction argument. For $M>0$, define the radial truncation $T_M:\mathcal S_0\to\mathcal S_0$ by
\[
    T_M(q):=
    \begin{cases}
    q, & |q|\leq M,\\[1mm]
    M\dfrac{q}{|q|}, & |q|>M.
    \end{cases}
\]
The map $T_M$ is $1$-Lipschitz, satisfies $T_M(0)=0$, and obeys $|T_M(q)|\leq M$ for every $q\in\mathcal S_0$. Moreover, by the Sobolev chain rule, for every $Q\in H^1(\Omega;\mathcal S_0)$,
\begin{equation}
    |\nabla T_M(Q)|\leq |\nabla Q|\quad\text{a.e. in }\Omega,\label{eq:truncation-gradient}
\end{equation}
and
\begin{equation}\label{eq:truncation-gradient-error}
    |\nabla Q-\nabla T_M(Q)|\leq2|\nabla Q|\,\mathbf 1_{\{|Q|\geq M\}} \qquad\text{a.e. in }\Omega.
\end{equation}

For $t\in(0,1)$, set
\[
    \Omega^+_{\varepsilon,t}:=\{\phi_\varepsilon>t\}, \qquad V_{\varepsilon,t}^M:=T_M(Q_\varepsilon)\chi_{\Omega_{\varepsilon,t}^+}.
\]
Once $t_\eps$ is selected, abbreviate these as $\Omega_\eps^+$ and $V_\eps^M$.

For $V\in SBV(\Omega;\mathcal S_0)$, we denote by $J_V$ its approximate jump set, by $V^\pm$ its one-sided approximate traces, and by $\nu_V$ the corresponding measure-theoretic normal. Its distributional derivative has the decomposition
\[
    DV=\nabla V\,\mathcal L^3+(V^+-V^-)\otimes\nu_V\,\mathcal H^2\mres J_V,
\]
where $\nabla V$ denotes the density of the absolutely continuous part of $DV$. We write
\[
    SBV^2(\Omega;\mathcal S_0):=\left\{V\in SBV(\Omega;\mathcal S_0):\nabla V\in L^2(\Omega;\mathcal S_0\otimes\mathbb R^3),\quad\mathcal H^2(J_V)<+\infty\right\}.
\]

We now implement the truncated-\(SBV^2\) compactness framework announced in the introduction. The selection of high-phase superlevel sets follows the restriction strategy of Lin, Pan, and Wang \cite{lin2012phase}, while the radial truncation is used to obtain uniformly bounded tensor-valued \(SBV^2\) approximations. A key point is that the resulting estimates allow us to pass from compactness of each fixed truncation to strong \(L^1\) compactness of the original sequence \(Q_\varepsilon\).

\begin{proposition}[Truncated-tensor $SBV^2$ compactness] \label{prop:truncated-sbv-compactness}
Let $\varepsilon,\kappa_\eps\to0$, and let $(Q_\varepsilon,\phi_\varepsilon)\in\mathcal A_\varepsilon$ be an energy-bounded sequence satisfying $(Q_\varepsilon,\phi_\varepsilon)\to(Q,A)$. Fix $0<a<b<1$. Then there exist levels $t_\varepsilon\in(a,b)$ such that the associated sets $\Omega_\varepsilon^+$ satisfy
\begin{equation}\label{eq:high-phase-perimeter}
    \sup_{\varepsilon>0}P(\Omega_\varepsilon^+;\Omega)<+\infty,\qquad \chi_{\Omega^+_\varepsilon}\to\chi_A \quad\text{strongly in }L^1(\Omega).
\end{equation}

For every $M>0$, $V_\varepsilon^M\in SBV^2(\Omega;\mathcal S_0)\cap L^\infty(\Omega;\mathcal S_0)$ and
\begin{equation}
    \int_\Omega |\nabla V_\varepsilon^M|^2\,dx \leq\frac{1}{a^2}\int_\Omega\phi_\varepsilon^2|\nabla Q_\varepsilon|^2\,dx,\label{eq:truncated-sbv-bounds}
\end{equation}
together with
\begin{equation}
    \mathcal H^2(J_{V_\varepsilon^M}) \leq P(\Omega^+_\varepsilon;\Omega),\qquad\|V_\varepsilon^M\|_{L^\infty(\Omega)}\leq M.\label{eq:truncated-jump-bound}
\end{equation}

In addition—and this is the compactness conclusion for the original sequence—after passing to a subsequence independent of \(M\),
\begin{equation}
    Q_\varepsilon\to Q\quad\text{strongly in }L^1(\Omega;\mathcal S_0),\label{eq:strong-l1-Q}
\end{equation}
and, for every $M\in\mathbb N$,
\begin{align}
    V_\varepsilon^M &\to T_M(Q) &&\text{strongly in }L^1(\Omega;\mathcal S_0),\label{eq:truncated-sbv-convergence}\\
    \nabla V_\varepsilon^M&\rightharpoonup \nabla T_M(Q) &&\text{weakly in } L^2(\Omega;\mathcal S_0\otimes\mathbb R^3).\label{eq:truncated-gradient-convergence}
\end{align}
Here $Q$ and $T_M(Q)$ are understood as their zero extensions to $\Omega$. In particular, $T_M(Q)\in SBV^2(\Omega;\mathcal S_0)$ and
\begin{align}
    \int_\Omega |\nabla T_M(Q)|^2\,dx &\leq \liminf_{\varepsilon\to0} \int_\Omega|\nabla V_\varepsilon^M|^2\,dx,\label{eq:truncated-gradient-lsc}\\
    \mathcal H^2(J_{T_M(Q)}) &\leq \liminf_{\varepsilon\to0}\mathcal H^2(J_{V_\varepsilon^M}).\label{eq:truncated-jump-lsc}
\end{align}

If, in addition, $A$ is open and
$Q\in H^1(A;\mathcal S_0)$, then $\nabla T_M(Q)=\chi_A\nabla T_M(Q|_A)$ a.e. in $\Omega$ and hence
\begin{equation}
    a^2\int_A|\nabla T_M(Q)|^2\,dx\leq\liminf_{\varepsilon\to0}\int_\Omega\phi_\varepsilon^2|\nabla Q_\varepsilon|^2\,dx\qquad\text{for every }M\in\mathbb N.\label{eq:truncated-weighted-gradient-liminf}
\end{equation}
\end{proposition}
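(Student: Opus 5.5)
The proof splits into four steps: select good levels, bound the truncated fields in $SBV^2$, pass to the limit for fixed $M$, and remove the truncation.

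\emph{Step 1: selection of the levels.} Following \cite{lin2012phase}, we use the coarea formula for $\Phi(\phi_\eps)$. Since $\Phi$ is strictly increasing on $[0,1]$,
\[
\int_{\Phi(a)}^{\Phi(b)}P(\{\Phi(\phi_\eps)>s\};\Omega)\,ds\le |D\Phi(\phi_\eps)|(\Omega)\le C .
\]
Moreover $\{\Phi(\phi_\eps)>s\}=\{\phi_\eps>\Phi^{-1}(s)\}$. Hence there is $t_\eps\in(a,b)$, which we may choose so that $\phi_\eps\in H^1$ has a regular level there, with
\[
P(\Omega_\eps^+;\Omega)\le C/(\Phi(b)-\Phi(a)).
\]
Since $\phi_\eps\to\chi_A$ in $L^1$ and $t_\eps$ stays in $[a,b]\subset(0,1)$, Chebyshev's inequality gives $|\Omega_\eps^+\triangle A|\to0$. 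This proves \eqref{eq:high-phase-perimeter}.

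\emph{Step 2: $SBV^2$ bounds.} $T_M(Q_\eps)$ belongs to $H^1\cap L^\infty$. Multiplying a bounded $W^{1,1}$ function by the characteristic function of a set of finite perimeter gives an $SBV$ function (Ambrosio--Fusco--Pallara). Its absolutely continuous gradient is $\chi_{\Omega_\eps^+}\nabla T_M(Q_\eps)$, and its jump set is contained in $\partial^*\Omega_\eps^+$. Using $\phi_\eps>t_\eps>a$ on $\Omega_\eps^+$ together with \eqref{eq:truncation-gradient}, we get \eqref{eq:truncated-sbv-bounds} and \eqref{eq:truncated-jump-bound}.

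\emph{Step 3: compactness at fixed $M$.} Ambrosio's $SBV$ compactness theorem applies, since $L^\infty$ bounds, $L^2$ gradient bounds and jump-length bounds are uniform in $\eps$. For each $M\in\mathbb N$ it gives a subsequence with $V_\eps^M\to V^M$ in $L^1$, $\nabla V_\eps^M\rightharpoonup\nabla V^M$ weakly in $L^2$, and lower semicontinuity of the jump measure. A diagonal argument over $M\in\mathbb N$ makes the subsequence independent of $M$.

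\emph{Step 4: identification of $V^M$ and strong convergence of $Q_\eps$.} This is the main obstacle, because $T_M(Q_\eps)$ itself need not converge strongly. I would first show that $Q_\eps$ is Cauchy in $L^1$. Fix $M$ and split
\[
\|Q_\eps-V_\eps^M\|_{L^1}\le \int_{(\Omega_\eps^+)^c}|Q_\eps|+\int_{\{|Q_\eps|>M\}\cap\Omega_\eps^+}|Q_\eps| .
\]
On $(\Omega_\eps^+)^c$ we have $\phi_\eps\le b<1$, so the void term bounds the first integral by $C\sqrt{\kappa_\eps}$. On $\Omega_\eps^+$ we have $\phi_\eps>a$, so \eqref{eq:weighted-tensor-L4} gives $\int_{\Omega_\eps^+}|Q_\eps|^4\le C/a^2$, and the second integral is at most $C_a/M^3$. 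Hence $\|Q_\eps-V_\eps^M\|_{L^1}\le C\sqrt{\kappa_\eps}+C_a M^{-3}$ uniformly in $\eps$. Since each $V_\eps^M$ converges in $L^1$, $Q_\eps$ is Cauchy in $L^1$; its limit must be the weak $L^2$ limit $Q$, which proves \eqref{eq:strong-l1-Q}. Then, a.e. along a further subsequence, $Q_\eps\to Q$ and $\chi_{\Omega_\eps^+}\to\chi_A$. Continuity of $T_M$ gives $V_\eps^M\to T_M(Q)\chi_A=T_M(Q)$, using $Q=0$ off $A$, so $V^M=T_M(Q)$. This yields \eqref{eq:truncated-sbv-convergence} and \eqref{eq:truncated-gradient-convergence}. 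The liminf inequalities \eqref{eq:truncated-gradient-lsc} and \eqref{eq:truncated-jump-lsc} follow from weak lower semicontinuity and from Ambrosio's theorem.

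\emph{Step 5: the case $A$ open and $Q\in H^1(A)$.} For $A$ open and $Q\in H^1(A)$, the zero extension of $T_M(Q|_A)$ is $SBV$ with absolutely continuous gradient $\chi_A\nabla T_M(Q|_A)$, because the jump part concentrates on $\partial A$, which is Lebesgue-null. Combining this with \eqref{eq:truncated-gradient-lsc} and \eqref{eq:truncated-sbv-bounds} gives \eqref{eq:truncated-weighted-gradient-liminf}.
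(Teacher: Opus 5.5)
Your proposal is correct and follows essentially the paper's route: coarea selection of the levels, the $SBV$ product rule, Ambrosio's compactness with a diagonal argument in $M$, and a Cauchy argument in $L^1$ driven by a truncation error uniform in $\varepsilon$. Your direct void-term bound $C\sqrt{\kappa_\varepsilon}$ on $\{\phi_\varepsilon\le t_\varepsilon\}$ and your $L^4$-based $M^{-3}$ bound are slightly cleaner variants of the paper's estimate; the one slip is in Step 5, where $\partial A$ need not be Lebesgue-null for a general open $A$, but the identity $\nabla T_M(Q)=\chi_A\nabla T_M(Q|_A)$ still holds because the approximate gradient of an $SBV$ function vanishes a.e.\ on its zero set (and for the $C^2$ droplets used later $\partial A$ is null anyway).
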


\begin{proof}
Set $m_{a,b}:=\min_{s\in[a,b]}\sqrt{W(s)}>0$. By the coarea formula,
\begin{align*}
    \int_a^bP(\{\phi_\eps>t\};\Omega)\,dt=\int_{\{a<\phi_\eps<b\}}|\nabla\phi_\eps|\,dx\le\frac1{m_{a,b}}\int_\Omega\sqrt{W(\phi_\eps)}|\nabla\phi_\eps|\,dx\le C_{a,b}.
\end{align*}
Consequently, one may choose $t_\eps\in(a,b)$ such that $\Omega_\eps^+=\{\phi_\eps>t_\eps\}$ has finite perimeter and $P(\Omega^+_\eps;\Omega) \le C_{a,b}/(b-a)$, proving the first assertion in \eqref{eq:high-phase-perimeter}.

We next prove the convergence of the high-phase sets. Since $a\leq t_\eps\le b$, on $A\setminus \Omega^+_\eps$ one has $\phi_\eps\le b$ while $\phi_\eps\geq a$ on $\Omega_\eps^+\setminus A$, and hence
\[
    |A\setminus \Omega^+_\eps|\le\frac1{1-b}\int_A(1-\phi_\eps)\,dx,\qquad |\Omega^+_\eps\setminus A|\le\frac1a\int_{A^c}\phi_\eps\,dx.
\]
The strong $L^1$ convergence $\phi_\eps\to\chi_A$ therefore implies $|\Omega^+_\eps\triangle A|\to0$, which proves the second assertion in \eqref{eq:high-phase-perimeter}.

Since $T_M(Q_\eps)\in H^1(\Omega;\mathcal S_0)\cap L^\infty(\Omega;\mathcal S_0)$ and $\Omega^+_\eps$ has finite perimeter, the standard $BV$ product rule yields $V_\eps^M=T_M(Q_\eps)\chi_{\Omega^+_\eps}\in SBV(\Omega;\mathcal S_0)$ with $\nabla V_\eps^M=\chi_{\Omega^+_\eps}\nabla T_M(Q_\eps)$ a.e. in $\Omega$, and $J_{V_\eps^M}\subset\partial^*\Omega^+_\eps$ up to an $\mathcal H^2$-null set. Because $\phi_\eps\ge a$ on $\Omega^+_\eps$,
\begin{align*}
    \int_\Omega|\nabla V_\eps^M|^2dx&=\int_{\Omega^+_\eps}|\nabla T_M(Q_\eps)|^2dx\le\int_{\Omega^+_\eps}|\nabla Q_\eps|^2dx\le\frac1{a^2}\int_\Omega\phi_\eps^2|\nabla Q_\eps|^2dx.
\end{align*}
This proves \eqref{eq:truncated-sbv-bounds}. The inclusion of the jump sets and the pointwise bound $|T_M(Q_\eps)|\le M$ give \eqref{eq:truncated-jump-bound}.

For every fixed $M\in\mathbb N$, estimates \eqref{eq:truncated-sbv-bounds}--\eqref{eq:truncated-jump-bound} and the $SBV$ compactness theorem imply that $\{V_\eps^M\}_\eps$ is precompact in $L^1(\Omega;\mathcal S_0)$. By a diagonal argument, we may therefore assume that for every $M\in\mathbb N$ there exists $V^M\in SBV^2(\Omega;\mathcal S_0)$ such that
\begin{equation}
\label{eq:preliminary-V-limit}
    V_\eps^M\to V^M\quad\text{strongly in }L^1(\Omega;\mathcal S_0),
\end{equation}
and $\nabla V_\eps^M\rightharpoonup\nabla V^M$ weakly in $L^2(\Omega)$.

We now prove the strong $L^1$ compactness of the original tensor sequence. Since $T_M$ is the radial projection onto the closed ball of radius $M$, 
\[
    |q-T_M(q)|\le |q|\mathbf 1_{\{|q|>M\}}\le \frac1M|q|^2.
\]
It follows that
\begin{align}
    \|Q_\eps-V_\eps^M\|_{L^1(\Omega)}&\le\int_{\Omega\setminus \Omega^+_\eps}|Q_\eps|\,dx+\int_{\Omega^+_\eps}|Q_\eps-T_M(Q_\eps)|\,dx\nonumber\\
    &\le\|Q_\eps\|_{L^1(A^c)}+\|Q_\eps\|_{L^2(\Omega)}|A\setminus \Omega^+_\eps|^{1/2}+\frac1M\|Q_\eps\|_{L^2(\Omega)}^2.\label{eq:Q-V-error}
\end{align}
By Proposition~\ref{prop:compactness}, and $|A\setminus \Omega^+_\eps|\to0$, estimate \eqref{eq:Q-V-error} gives
\begin{equation}
\label{eq:Q-V-uniform-error}
    \limsup_{\eps\to0}\|Q_\eps-V_\eps^M\|_{L^1(\Omega)}\le\frac{C}{M}\qquad\text{for every }M\in\mathbb N.
\end{equation}

We now pass from compactness of the truncated restrictions to compactness of the original tensor sequence; this is the point at which the uniform truncation error is essential. We claim that $\{Q_\eps\}_\eps$ is a Cauchy sequence in $L^1(\Omega;\mathcal S_0)$ along the selected subsequence. Indeed, for every $M\in\mathbb N$,
\[
    \|Q_\eps-Q_{\eps'}\|_{L^1}\le\|Q_\eps-V_\eps^M\|_{L^1}+\|V_\eps^M-V_{\eps'}^M\|_{L^1}+\|V_{\eps'}^M-Q_{\eps'}\|_{L^1}.
\]
First let $\eps,\eps'\to0$ and use \eqref{eq:preliminary-V-limit}; then let $M\to\infty$ and use \eqref{eq:Q-V-uniform-error}. Thus there exists $\widetilde Q\in L^1(\Omega;\mathcal S_0)$ such that $Q_\eps\to\widetilde Q$ strongly in $L^1(\Omega;\mathcal S_0)$. On the other hand, $Q_\eps\rightharpoonup Q$ weakly in $L^2$, and hence also in the sense of distributions. The $L^1$ limit is therefore unique, and $\widetilde Q=Q$ a.e. in $\Omega$. This proves \eqref{eq:strong-l1-Q}.

We can now identify the limits in \eqref{eq:preliminary-V-limit}. Since $T_M$ is $1$-Lipschitz and $|T_M(q)|\le M$,
\begin{align*}
    \|V_\eps^M-T_M(Q)\chi_A\|_{L^1}&\le\|T_M(Q_\eps)-T_M(Q)\|_{L^1}+M\|\chi_{\Omega^+_\eps}-\chi_A\|_{L^1}\\
    &\le\|Q_\eps-Q\|_{L^1}+M\|\chi_{\Omega^+_\eps}-\chi_A\|_{L^1}\to0.
\end{align*}
Thus $V^M=T_M(Q)\chi_A$, which proves \eqref{eq:truncated-sbv-convergence} and \eqref{eq:truncated-gradient-convergence}. The lower-semicontinuity statements \eqref{eq:truncated-gradient-lsc} and \eqref{eq:truncated-jump-lsc} follow from the $SBV$ compactness theorem.

Finally, suppose that $A$ is open and $Q\in H^1(A;\mathcal S_0)$. Since $T_M$ is Lipschitz, $T_M(Q)\in H^1(A;\mathcal S_0)$, and the absolutely continuous part of the derivative of its zero extension is
\[
    \nabla\bigl(T_M(Q)\chi_A\bigr)=\chi_A\nabla T_M(Q)\qquad\text{a.e. in }\Omega.
\]
Combining \eqref{eq:truncated-gradient-lsc} with \eqref{eq:truncated-sbv-bounds} gives \eqref{eq:truncated-weighted-gradient-liminf}.
\end{proof}
\begin{remark}[Role of the truncated-\(SBV^2\) construction]
    The selection of high-phase superlevel sets is adapted from the restriction strategy in \cite{lin2012phase}. In the present problem, the elastic bound controls only \(\phi_\varepsilon\nabla Q_\varepsilon\), and therefore gives no direct unweighted \(H^1_{\mathrm{loc}}\) estimate for \(Q_\varepsilon\). The radial truncation serves two distinct purposes. First, it produces uniformly bounded \(SBV^2\) approximations to which the standard compactness theorem applies. Second, the truncation error is controlled uniformly in \(\varepsilon\) by the weighted bulk-potential bound, which permits the limit \(M\to\infty\) and yields strong \(L^1\) compactness of \(Q_\varepsilon\) itself. This distinction is important in Sections \ref{subsec:anisotropic} and \ref{subsec:coupled-transition}. There the argument requires convergence of products involving the rapidly varying phase field and \(\nabla Q_\varepsilon\), for which weak \(L^2\) convergence of \(Q_\varepsilon\) is insufficient. The truncated-\(SBV^2\) convergence supplies precisely the distributional compactness needed to identify these terms, without any additional \(H^1_{\mathrm{loc}}\) or \(L^\infty\) assumption on the diffuse tensor sequence.
\end{remark}

\begin{remark}\label{rem:SBV-convergence-notation}
Throughout the lower-bound argument, $V_\varepsilon^M\sbvto T_M(Q)$ means $V_\varepsilon^M\to T_M(Q)$ strongly in $L^1(\Omega;\mathcal S_0)$ and $\nabla V_\varepsilon^M\rightharpoonup\chi_A\nabla T_M(Q|_A)$ weakly in $L^2(\Omega;\mathcal S_0\otimes\mathbb R^3)$. Here $T_M(Q)$ denotes the zero extension of $T_M(Q|_A)$.
\end{remark}

\subsection{Tubular coordinates near the limiting interface}
\label{sec:tubular}
Let $(Q,A)\in\mathcal A_0^{\rm reg}$. Since $A\Subset\Omega$ has $C^2$ boundary, there exists $\rho_0>0$ such that the signed distance function
\[
d_A(x):=
\begin{cases}
-\operatorname{dist}(x,\partial A), & x\in A,\\
\operatorname{dist}(x,\partial A), & x\in A^c,
\end{cases}
\]
is $C^2$ on $U_{\rho_0}:=\{x\in\Omega:|d_A(x)|<\rho_0\}$. The normal-coordinate map
\begin{equation*}
    \Psi:\partial A\times(-\rho_0,\rho_0)\to U_{\rho_0},\qquad\Psi(y,t):=y+t\nu_A(y),
\end{equation*}
is a $C^1$-diffeomorphism. Set
\[
    J_A(y,t):=|\det D\Psi(y,t)|
\]
Uniformly for \(y\in\partial A\) and \(|t|<\rho_0\),
\[
    |J_A(y,t)-1|\leq C|t|.
\]
Consequently, for every integrable \(f\),
\begin{equation}\label{eq:tubular-change-of-variables}
    \int_{U_\rho}f(x)\,dx=\int_{\partial A}\int_{-\rho}^{\rho}f(\Psi(y,t))J_A(y,t)\,dt\,d\mathcal H^2(y).
\end{equation}
Moreover, if \(f\in H^1(U_{\rho_0})\), then
\begin{equation}\label{eq:normal-slice-gradient}
    |\partial_t(f\circ\Psi)(y,t)|\leq|\nabla f(\Psi(y,t))|.
\end{equation}
for \(\mathcal H^2\)-a.e. \(y\) and a.e. \(t\).

For $0<\rho<\rho_0$, set
\[
    U_\rho:=\{|d_A|<\rho\},\qquad U_\rho^{\rm int}:=U_\rho\cap A,\qquad U_\rho^{\rm ext}:=U_\rho\cap A^c,
\]
and, for $0<\delta,\rho<\rho_0$,
\[
    N_{\delta,\rho}:=U_\rho^{\rm ext}\cup U_\delta^{\rm int},\qquad A_\rho:=\{x\in A:\operatorname{dist}(x,\partial A)>\rho\}.
\]
Up to sets of measure zero,
\[
    U_\rho^{\rm int}=\Psi\bigl(\partial A\times(-\rho,0)\bigr),\qquad U_\rho^{\rm ext}=\Psi\bigl(\partial A\times(0,\rho)\bigr).
\]

Also
\begin{equation}\label{eq:tubular-jacobian-lower-bound}
    c_{\delta,\rho}:=\inf_{\partial A\times(-\delta,\rho)}J_A,
\end{equation}
where
\[
    1-C(\delta+\rho)\leq c_{\delta,\rho}\leq 1, \qquad c_{\delta,\rho}\to1\quad\text{as }\delta,\rho\downarrow0.
\]
The same notation and estimates are used componentwise for $\mathcal S_0$-valued maps.

\section{Lower Bound}\label{sec:lower-bound}
We prove the liminf inequality. The bulk, mixing, and relaxation contributions are treated separately, while the interaction of the two interfacial scales appears only in the latter two terms.

The relative scale $r_\eps=\kap/\eps$ determines how these interfacial contributions must be combined. When $r_\eps\to +\infty$, the phase transition and exterior relaxation can be estimated separately. When $r_\eps\to0$, the relaxation is strong enough to force the interfacial trace $Q^-$ to vanish. In the critical regime $r_\eps\to\ell\in(0,\infty)$, part of the void potential contributes within the phase-transition layer, and a coupled lower-bound argument is required.

Throughout this section, let $(Q_\eps,\phi_\eps)\in\mathcal A_\eps$ be an energy-bounded sequence such that $(Q_\eps,\phi_\eps)\to(Q,A)$, where $(Q,A)\in\mathcal A_0^{\rm reg}$. No additional interior compactness assumption is imposed on the diffuse tensor sequence $Q_\eps$.

\subsection{The bulk contribution}

The bulk term is expected to survive only in the interior region $A$, where $\phi_\eps\to1$. The coefficient $\phi_\eps^2$ may nevertheless degenerate on small subsets of $A$, so the diffuse energy does not provide an unweighted local $H^1$ bound for the original sequence $Q_\eps$.

We instead use Proposition~\ref{prop:truncated-sbv-compactness}. After restricting $Q_\eps$ to suitable high-phase sets and truncating its amplitude, the resulting tensors are weakly compact in $SBV^2$. This is sufficient to recover the absolutely continuous gradient of the regular limiting tensor $Q$ and to establish the bulk lower bound.

\begin{proposition}[Bulk lower bound]
\label{prop:bulk-lower-bound}
Let $(Q_\eps,\phi_\eps)\in\mathcal A_\eps$ be an energy-bounded sequence such that $(Q_\eps,\phi_\eps)\to(Q,A)$ with $(Q,A)\in\mathcal A_0^{\rm reg}$. Then
\begin{equation*}
    \liminf_{\eps\to0} E_\eps^{\rm bulk}\ge\int_A\left(|\nabla Q|^2+\frac1{2\lambda}F_b(Q)\right)dx.
\end{equation*}
Moreover, for every open set $A'\Subset A$,
\begin{equation*}
    \liminf_{\eps\to0} E_\eps^{\rm bulk}(A')\ge\int_{A'}\left(|\nabla Q|^2+\frac1{2\lambda}F_b(Q) \right)dx,
\end{equation*}
where
\[
    E_\eps^{\rm bulk}(U):=\int_U\phi_\eps^2\left(|\nabla Q_\eps|^2+\frac1{2\lambda}F_b(Q_\eps)\right)dx
\]
for every measurable set $U\subset\Omega$.
\end{proposition}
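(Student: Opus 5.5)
We split the bulk energy into the elastic part $\int\phi_\eps^2|\nabla Q_\eps|^2$ and the potential part $\frac1{2\lambda}\int\phi_\eps^2F_b(Q_\eps)$ and bound each from below separately. Since both integrands are nonnegative ($F_b\ge0$), it suffices to prove the statement on $A'$ for every $A'\Subset A$. The global bound then follows by restricting the integral to $A'$ and letting $A'\uparrow A$, using monotone convergence on the right-hand side, which is finite because $Q\in H^1(A)$ and $F_b(Q)\le C(1+|Q|^4)\in L^1$ by Sobolev embedding. A superadditivity caveat: the liminf of a sum dominates the sum of liminfs, so handling the two parts separately is legitimate.

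\emph{Potential part.} By \eqref{eq:strong-l1-Q} and \eqref{eq:phase-field-L1}, we may pass to a further subsequence realizing the liminf along which $Q_\eps\to Q$ and $\phi_\eps\to\chi_A$ a.e. Since $F_b$ is continuous and nonnegative, Fatou's lemma gives
\[
\liminf_{\eps\to0}\int_{A'}\phi_\eps^2F_b(Q_\eps)\,dx\ge\int_{A'}F_b(Q)\,dx .
\]
No compactness beyond strong $L^1$ convergence is needed here.

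\emph{Elastic part.} This is the step that must avoid any unweighted $H^1$ bound, and we use the localized form of Proposition~\ref{prop:truncated-sbv-compactness}. Fix $0<a<b<1$ and $M\in\mathbb N$. On the selected high-phase set $\Omega_\eps^+$ we have $\phi_\eps\ge a$, and $|\nabla T_M(Q_\eps)|\le|\nabla Q_\eps|$ by \eqref{eq:truncation-gradient}. Hence
\[
\int_{A'}\phi_\eps^2|\nabla Q_\eps|^2\ge a^2\int_{A'}|\nabla V_\eps^M|^2 .
\]
Restricting the weak convergence \eqref{eq:truncated-gradient-convergence} to the open set $A'$ gives $\nabla V_\eps^M\rightharpoonup\nabla T_M(Q)$ weakly in $L^2(A')$. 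Weak lower semicontinuity of the $L^2(A')$ norm then yields
\[
\liminf_{\eps\to0}\int_{A'}\phi_\eps^2|\nabla Q_\eps|^2\ge a^2\int_{A'}|\nabla T_M(Q)|^2 .
\]
We now pass to the limit in the parameters. First let $M\to\infty$: on $A$ we have $\nabla T_M(Q)=\nabla Q$ a.e. on $\{|Q|<M\}$, so monotone convergence applies. Then let $a\uparrow1$, choosing $b\in(a,1)$ each time. The subsequences depend on $a$ and $M$, but the left-hand side does not, so this causes no difficulty as long as we work along a subsequence that realizes the liminf of the total bulk energy on $A'$ from the outset.

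The main obstacle is exactly this last bookkeeping. The potential part needs an a.e. subsequence, and the elastic part needs the subsequences of Proposition~\ref{prop:truncated-sbv-compactness}, which depend on $(a,b)$. We resolve it by first extracting a subsequence achieving $\liminf E^{\rm bulk}_\eps(A')$. Every further subsequence then has the same limit, so both estimates can be applied to nested subsequences and summed. The other subtle point is already handled by the proposition: low-phase pockets inside $A$ carrying large gradients are removed by the restriction to $\Omega_\eps^+$, and they do not spoil the limit because $|A\setminus\Omega_\eps^+|\to0$.
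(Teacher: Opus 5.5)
Your proposal is correct and follows essentially the same route as the paper. Both handle the potential by Fatou along an a.e.-convergent subsequence, handle the elastic term on $A'\Subset A$ via the truncated restrictions $V_\eps^M$ (using $\phi_\eps\ge a$ on $\Omega_\eps^+$ and weak lower semicontinuity), then let $M\to\infty$ and $a\uparrow1$, and finish by exhausting $A$ with sets $A'\Subset A$. The differences are minor: you pass $M\to\infty$ by monotone rather than dominated convergence (monotonicity does hold, though the domination $|\nabla T_M(Q)|\le|\nabla Q|$ used in the paper is the simpler justification), and you spell out the nested-subsequence bookkeeping more explicitly than the paper does.
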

\begin{proof}
Fix an open set $A' \Subset A$. We first consider the gradient term. Passing to a subsequence realizing the corresponding lower limit, and then to a further subsequence if necessary, Proposition~\ref{prop:truncated-sbv-compactness} gives
\[
    V_{\varepsilon}^{M}\sbvto T_M(Q) \qquad\text{for every }M\in\mathbb N.
\]
Since $A'\Subset A$ and $Q\in H^1(A;\mathcal S_0)$, the absolutely continuous part of the derivative of the zero extension of $T_M(Q)$ coincides with $\nabla T_M(Q)$ on $A'$. Therefore,
\[
    \int_{A'} |\nabla T_M(Q)|^2\,dx\leq\liminf_{\varepsilon\to0}\int_{A'} |\nabla V_{\varepsilon}^{M}|^2\,dx.
\]
Moreover,
\[
    \nabla V_{\varepsilon}^{M}=\chi_{\Omega_{\varepsilon}^{+}}\nabla T_M(Q_{\varepsilon})\qquad\text{a.e. in }\Omega.
\]
Since $T_M$ is $1$-Lipschitz and $\phi_{\varepsilon}\geq t_{\varepsilon}\geq a$ on $\Omega_{\varepsilon}^{+}$, using \eqref{eq:truncation-gradient} we obtain
\begin{align*}
    \int_{A'} |\nabla V_{\varepsilon}^{M}|^2\,dx=\int_{A'\cap\Omega_{\varepsilon}^{+}}|\nabla T_M(Q_{\varepsilon})|^2\,dx\leq\int_{A'\cap\Omega_{\varepsilon}^{+}}|\nabla Q_{\varepsilon}|^2\,dx\leq \frac{1}{a^2}\int_{A'}\phi_{\varepsilon}^2|\nabla Q_{\varepsilon}|^2\,dx .
\end{align*}
Consequently,
\begin{align}\label{eq:localized-truncated-gradient-liminf}
    a^2\int_{A'}|\nabla T_M(Q)|^2\,dx\leq\liminf_{\varepsilon\to0}\int_{A'}\phi_{\varepsilon}^2|\nabla Q_{\varepsilon}|^2\,dx.
\end{align}
By \eqref{eq:truncation-gradient-error} and the pointwise convergence $\nabla T_M(Q)\to\nabla Q$ a.e. in $A$, dominated convergence allows us to let $M\to\infty$ in \eqref{eq:localized-truncated-gradient-liminf}. Letting subsequently $a\uparrow1$, we obtain
\begin{equation}\label{eq:localized-bulk-gradient-liminf}
    \liminf_{\varepsilon\to0}\int_{A'}\phi_{\varepsilon}^2|\nabla Q_{\varepsilon}|^2\,dx\geq\int_{A'}|\nabla Q|^2\,dx.
\end{equation}

We next treat the bulk potential. Taking a subsequence realizing the lower limit and using \eqref{eq:strong-l1-Q} and \eqref{eq:phase-field-L1}, we may assume pointwise convergence a.e. in $\Omega$. Since $A'\Subset A$ and $F_b$ is continuous,
\[
    \phi_\varepsilon^2 F_b(Q_\varepsilon)\to F_b(Q) \qquad\text{a.e. in }A'.
\]
Therefore, by Fatou's lemma,
\begin{equation}\label{eq:localized-bulk-potential-liminf}
    \liminf_{\varepsilon\to0}\int_{A'}\phi_\varepsilon^2 F_b(Q_\varepsilon)\,dx\geq\int_{A'}F_b(Q)\,dx.
\end{equation}

By the superadditivity of the $\liminf$, combining \eqref{eq:localized-bulk-gradient-liminf} and \eqref{eq:localized-bulk-potential-liminf} yields 
\begin{equation*}
    \liminf_{\varepsilon\to0}E_\varepsilon^{\mathrm{bulk}}(A')\geq\int_{A'}\left(|\nabla Q|^2+\frac{1}{2\lambda}F_b(Q)\right)\,dx.
\end{equation*}

Finally, let $\{A_j\}_{j\in\mathbb N}$ be an increasing sequence of open sets such that
\[
    A_j\Subset A,\qquad\bigcup_{j=1}^{\infty}A_j=A\quad\text{up to a null set}.
\]
Since $E_{\varepsilon}^{\mathrm{bulk}}\geq E_{\varepsilon}^{\mathrm{bulk}}(A_j)$, the localized estimate gives
\[
    \liminf_{\varepsilon\to0}E_{\varepsilon}^{\mathrm{bulk}}\geq\int_{A_j}\left(|\nabla Q|^2+\frac{1}{2\lambda}F_b(Q)\right)\,dx
\]
for every $j$. Letting $j\to\infty$ and using monotone convergence proves the result.
\end{proof}

\subsection{The anisotropic phase-transition contribution}\label{subsec:anisotropic}
The anisotropic mixing term requires identifying the distributional limit of $Q_\varepsilon D\Phi(\phi_\varepsilon)$, since the tensor coefficient is not strongly controlled across the transition layer. The next lemma combines the truncated-$SBV^2$ convergence with an integration-by-parts argument.

The main point is to identify the term \(\Phi(\phi_\varepsilon)\nabla Q_\varepsilon\). Although \(\nabla Q_\varepsilon\) is not locally weakly compact a priori, Proposition \ref{prop:truncated-sbv-compactness} allows us to perform this identification through the gradients of the truncated restrictions \(V_\varepsilon^M\).
\begin{lemma}[Transition measure]\label{lem:transition-measure}
Let $(Q_\varepsilon,\phi_\varepsilon)\in\mathcal A_\varepsilon$
be an energy-bounded sequence such that $(Q_\varepsilon,\phi_\varepsilon)\to(Q,A)$ with $(Q,A)\in\mathcal A_0^{\mathrm{reg}}$. Then
\begin{equation}\label{eq:tensor-transition-product}
    Q_\varepsilon D\Phi(\phi_\varepsilon) \to c_0Q^-D\chi_A\quad\text{in }\mathcal D'(\Omega;\mathcal S_0\otimes\mathbb R^3).
\end{equation}
Moreover, the measures $\mu_\varepsilon:= A(Q_\varepsilon)D\Phi(\phi_\varepsilon)$ are uniformly bounded in $\mathcal M(\Omega;\mathbb R^3)$ and satisfy
\begin{equation}\label{eq:anisotropic-transition-measure}
    \mu_\varepsilon\stackrel{*}{\rightharpoonup} c_0 A(Q^-)D\chi_A\quad\text{in }\mathcal M(\Omega;\mathbb R^3).
\end{equation}
Consequently,
\begin{equation}\label{eq:combined-transition-measure}
    \Lambda_\varepsilon:=\left(D\Phi(\phi_\varepsilon),\sqrt{\omega}\, A(Q_\varepsilon)D\Phi(\phi_\varepsilon)\right)\stackrel{*}{\rightharpoonup}\Lambda:=\left(c_0D\chi_A,\sqrt{\omega}\,c_0 A(Q^-)D\chi_A\right).
\end{equation}
\end{lemma}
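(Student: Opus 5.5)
We identify the limit of $Q_\eps D\Phi(\phi_\eps)$ by integrating by parts, as announced above. For a test field $\Xi\in C_c^\infty(\Omega;\mathcal S_0\otimes\mathbb R^3)$, componentwise the product rule gives
\[
\int_\Omega Q_\eps\nabla\Phi(\phi_\eps):\Xi\,dx=-\int_\Omega\Phi(\phi_\eps)\,Q_\eps:\operatorname{div}\Xi\,dx-\int_\Omega\Phi(\phi_\eps)\,\nabla Q_\eps\,\vdots\,\Xi\,dx .
\]
The first term is handled by Proposition~\ref{prop:truncated-sbv-compactness}. Since $\Phi(\phi_\eps)$ is bounded by $c_0$ and converges to $c_0\chi_A$ a.e. along a subsequence, and $Q_\eps\to Q$ strongly in $L^1$, the term converges to $-c_0\int_A Q:\operatorname{div}\Xi$.

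For the second term I would replace $\nabla Q_\eps$ by $\nabla V_\eps^M$. On $\Omega_\eps^+\cap\{|Q_\eps|<M\}$ the two agree. On $\Omega\setminus\Omega_\eps^+$, i.e. where $\phi_\eps\le t_\eps\le b$, we have $\Phi(\phi_\eps)\le\sqrt{W(\phi_\eps)}\,\phi_\eps$ up to constants. More precisely, $\Phi(s)\le C s^2$ near $0$ and $\Phi(s)\le Cs\le C'\phi_\eps$ in general. Hence
\[
\int\Phi(\phi_\eps)|\nabla Q_\eps|\,\mathbf 1_{\{\phi_\eps\le b\}}\le C\int\phi_\eps\,|\nabla Q_\eps|\,\mathbf 1_{\{\phi_\eps\le b\}}\le C\|\phi_\eps\nabla Q_\eps\|_{L^2}\,|\{\phi_\eps\le b\}\cap\operatorname{supp}\Xi\cap\cdots|^{1/2}.
\]
This is not small as it stands, since $\{\phi_\eps\le b\}$ contains $A^c$. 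To fix this I would use $\Phi(s)\le Cs^2$ together with $\phi_\eps^2|\nabla Q_\eps|$ and split by Cauchy--Schwarz with the void term $\kap|\nabla Q_\eps|^2$:
\[
\int\phi_\eps^2|\nabla Q_\eps|\le\Big(\int\phi_\eps^2|\nabla Q_\eps|^2\Big)^{1/2}\Big(\int_{\{\phi_\eps\le b\}}\phi_\eps^2\Big)^{1/2}.
\]
The last factor is bounded by $\|\phi_\eps\|_{L^2(A^c)}+|A\cap\{\phi_\eps\le b\}|^{1/2}\to0$. On the truncation set $\Omega_\eps^+\cap\{|Q_\eps|\ge M\}$, by \eqref{eq:truncation-gradient-error} the error is at most $C\|\phi_\eps\nabla Q_\eps\|_{L^2}\,|\{|Q_\eps|\ge M\}|^{1/2}\le C/M$, using the $L^2$ bound on $Q_\eps$.

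Next, \eqref{eq:truncated-gradient-convergence} together with the strong $L^2_{loc}$ convergence of the bounded functions $\Phi(\phi_\eps)$ gives
\[
\int\Phi(\phi_\eps)\nabla V_\eps^M\,\vdots\,\Xi\to c_0\int_A\nabla T_M(Q)\,\vdots\,\Xi .
\]
Letting $M\to\infty$ yields $c_0\int_A\nabla Q\,\vdots\,\Xi$. Combining, the limit is
\[
-c_0\int_A\big(Q:\operatorname{div}\Xi+\nabla Q\,\vdots\,\Xi\big)=-c_0\int_{\partial A}Q^-\nu_A:\Xi\,d\mathcal H^2 ,
\]
by the Gauss--Green formula for $Q\in H^1(A)$ on the $C^2$ set $A$. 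This is exactly $c_0Q^-D\chi_A$ tested against $\Xi$, since $D\chi_A=-\nu_A\mathcal H^2\mres\partial A$. This proves \eqref{eq:tensor-transition-product}.

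For \eqref{eq:anisotropic-transition-measure} we need a total variation bound. The estimate $|A(Q_\eps)\nabla\Phi(\phi_\eps)|\le|A(Q_\eps)\nabla\phi_\eps|\sqrt{W(\phi_\eps)}$, combined with Young's inequality between $\eps\omega|A(Q_\eps)\nabla\phi_\eps|^2$ and $\eps^{-1}W(\phi_\eps)$, bounds $|\mu_\eps|(\Omega)$ by the mixing energy. Distributional convergence then follows from the previous step, using $A(Q_\eps)D\Phi=Q_\eps D\Phi+\tfrac{s_+}{3}D\Phi$ and \eqref{eq:phase-transition-measure}. Distributional convergence plus a uniform mass bound gives weak-$*$ convergence by density of $C_c^\infty$ in $C_c$, and \eqref{eq:combined-transition-measure} is then immediate.

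The main obstacle is the error term from the low-phase region in the second term above. There $\nabla Q_\eps$ is controlled only through the weight, so one must exploit the quadratic vanishing of $\Phi$ at $0$ to transfer the weight, and still make the estimate uniform in the transition layer $\{a<\phi_\eps<b\}$. This may require letting $b\downarrow0$ in the choice of levels as well.
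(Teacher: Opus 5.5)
Your proposal is correct and follows essentially the same route as the paper: you integrate by parts, identify $\Phi(\phi_\varepsilon)\nabla Q_\varepsilon$ through $\nabla V_\varepsilon^M$, kill the low-phase remainder with $\Phi(s)\le s^2/2$ and Cauchy--Schwarz against $\|\phi_\varepsilon\nabla Q_\varepsilon\|_{L^2}$, remove the truncation at cost $O(1/M)$ (via Chebyshev and the $L^2$ bound on $Q_\varepsilon$, where the paper uses the weighted $L^4$ bound), and upgrade distributional convergence to weak-$*$ convergence with the mass bound from the mixing energy. The worry in your last paragraph is unfounded: your estimate already gives smallness on all of $\{\phi_\varepsilon\le b\}$ with $a<b<1$ fixed, so no limit $b\downarrow 0$ is needed, and the void term you mention plays no role.
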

\begin{proof}
Fix an arbitrary subsequence. By Proposition~\ref{prop:truncated-sbv-compactness} and Remark~\ref{rem:SBV-convergence-notation}, after passing to a further subsequence, not relabeled,
\[
    V_\varepsilon^M\sbvto T_M(Q)\qquad\text{for every }M\in\mathbb N.
\]
We prove all the asserted convergences along this subsequence.

We first claim that
\begin{equation}\label{eq:weighted-gradient-distribution}
    \Phi(\phi_\varepsilon)\nabla Q_\varepsilon\to c_0\chi_A\nabla Q\quad\text{in }\mathcal D'(\Omega;\mathcal S_0\otimes\mathbb R^3).
\end{equation}
Let $\Psi\in C_c^\infty(\Omega;\mathcal S_0\otimes\mathbb R^3)$. For fixed $M$, by the definition of $V_\varepsilon^M$,
\begin{align*}
    \int_\Omega\Phi(\phi_\varepsilon)\Psi:\nabla T_M(Q_\varepsilon)\,dx=\int_\Omega\Phi(\phi_\varepsilon)\Psi:\nabla V_\varepsilon^M\,dx+\int_{\Omega\setminus\Omega_\varepsilon^+}\Phi(\phi_\varepsilon)\Psi:\nabla T_M(Q_\varepsilon)\,dx.
\end{align*}

Since \(\Phi\) is bounded on \([0,1]\), the strong \(L^1\)-convergence in \eqref{eq:Phi-phase-L1} improves to strong \(L^2\)-convergence:
\[
    \|\Phi(\phi_\varepsilon)-c_0\chi_A\|_{L^2(\Omega)}^2\leq C\|\Phi(\phi_\varepsilon)-c_0\chi_A\|_{L^1(\Omega)}\to0.
\]
On the other hand,
\[
    \nabla V_\varepsilon^M\rightharpoonup\chi_A\nabla T_M(Q) \qquad\text{weakly in }L^2(\Omega).
\]
Therefore, for every \(\Psi\in C_c^\infty (\Omega;\mathcal S_0\otimes\mathbb R^3)\),
\[
    \int_\Omega\Phi(\phi_\varepsilon)\Psi:\nabla V_\varepsilon^M\,dx\to c_0\int_A\Psi:\nabla T_M(Q)\,dx.
\]

Using $0\leq\Phi(s)\leq s^2/2$ for $s\in[0,1]$ and \eqref{eq:truncation-gradient},
\begin{align*}
    &\left|\int_{\Omega\setminus\Omega_\varepsilon^+}\Phi(\phi_\varepsilon)\Psi:\nabla T_M(Q_\varepsilon)\,dx\right|\leq C\|\Psi\|_{L^\infty} \|\phi_\eps|\nabla Q_\eps|\|_{L^2(\Omega)} \left(\int_{\Omega\setminus\Omega_\varepsilon^+}\phi_\varepsilon^2\,dx\right)^{1/2}\to 0,
\end{align*}
using
\[
    \int_{\Omega\setminus\Omega_\varepsilon^+}\phi_\varepsilon^2\,dx\leq|A\setminus\Omega_\varepsilon^+|+\int_{A^c}\phi_\varepsilon\,dx\to0
\]
by \eqref{eq:high-phase-perimeter} and \eqref{eq:phase-field-L1}. Therefore
\[
    \int_\Omega\Phi(\phi_\varepsilon)\Psi:\nabla T_M(Q_\varepsilon)\,dx\to c_0\int_A\Psi:\nabla T_M(Q)\,dx.
\]

It remains to remove the truncation. By \eqref{eq:truncation-gradient-error} and \eqref{eq:weighted-tensor-L4},
\begin{align*}
    &\left|\int_\Omega\Phi(\phi_\varepsilon)\Psi:\bigl(\nabla Q_\varepsilon-\nabla T_M(Q_\varepsilon)\bigr)\,dx\right|\\
    &\quad\leq C\|\Psi\|_{L^\infty}\left(\int_\Omega\phi_\varepsilon^2|\nabla Q_\varepsilon|^2\,dx\right)^{1/2}\left(\int_{\{|Q_\varepsilon|\geq M\}}\phi_\varepsilon^2\,dx\right)^{1/2}
    \leq\frac{C}{M^2}\|\Psi\|_{L^\infty}.
\end{align*}
Furthermore, the properties of the radial truncation and \eqref{eq:truncation-gradient} give
\[
    \nabla T_M(Q)\to\nabla Q \qquad\text{strongly in }L^2(A)
\]
as $M\to\infty$. Letting first $\varepsilon\to0$ and then $M\to\infty$ proves \eqref{eq:weighted-gradient-distribution}.

Next, we claim that
\[
    \Phi(\phi_\varepsilon)Q_\varepsilon\to c_0\chi_AQ\qquad\text{strongly in }L^1(\Omega;\mathcal S_0).
\]
Indeed,
\begin{align*}
    \|\Phi(\phi_\varepsilon)Q_\varepsilon-c_0\chi_AQ\|_{L^1}&\leq\|\Phi(\phi_\varepsilon)(Q_\varepsilon-Q)\|_{L^1}+\|(\Phi(\phi_\varepsilon)-c_0\chi_A)Q\|_{L^1}\\
    &\leq\|\Phi\|_{L^\infty([0,1])}\|Q_\varepsilon-Q\|_{L^1}+\|\Phi(\phi_\varepsilon)-c_0\chi_A\|_{L^2}\|Q\|_{L^2},
\end{align*}
and both terms converge to zero.

For $\Psi\in C_c^1 (\Omega;\mathcal S_0\otimes\mathbb R^3)$, integration by parts gives
\begin{align*}
    \left\langle Q_\varepsilon D\Phi(\phi_\varepsilon),\Psi \right\rangle= -\int_\Omega\Phi(\phi_\varepsilon)Q_\varepsilon:\operatorname{div}\Psi\,dx-\int_\Omega\Phi(\phi_\varepsilon)\Psi:\nabla Q_\varepsilon\,dx.
\end{align*}
Passing to the limit using
\eqref{eq:weighted-gradient-distribution}, we obtain
\begin{align*}
    \lim_{\varepsilon\to0}\left\langle Q_\varepsilon D\Phi(\phi_\varepsilon),\Psi\right\rangle=-c_0\int_{\partial A}(Q^-\otimes\nu_A):\Psi\,d\mathcal H^2=\left\langle c_0Q^-D\chi_A,\Psi\right\rangle.
\end{align*}
This proves \eqref{eq:tensor-transition-product} along the selected subsequence.

Finally, the energy bound gives
\[
    |\mu_\varepsilon|(\Omega)\leq\left(\int_\Omega\varepsilon^{-1}W(\phi_\varepsilon)\,dx\right)^{1/2}\left(\int_\Omega\varepsilon\left|A(Q_\varepsilon)\nabla\phi_\varepsilon\right|^2\,dx\right)^{1/2}\leq C.
\]
From the definition of $A$, see \eqref{eq:anchoring-matrix}, \eqref{eq:phase-transition-measure} and \eqref{eq:tensor-transition-product} imply $\mu_\varepsilon\to c_0 A(Q^-)D\chi_A$ in $\mathcal D'(\Omega;\mathbb R^3).$ Since $Q^-\in L^2(\partial A;\mathcal S_0)$ and $\mathcal H^2(\partial A)<+\infty$, the limiting distribution $c_0 A(Q^-)D\chi_A$ is a finite Radon measure. Hence the uniform measure bound yields \eqref{eq:anisotropic-transition-measure}. Finally, \eqref{eq:combined-transition-measure} follows from \eqref{eq:phase-transition-measure} and \eqref{eq:anisotropic-transition-measure}.
\end{proof}

\begin{proposition}[Classical mixing lower bound]\label{prop:mixing-lower-bound}
Let $(Q_\varepsilon,\phi_\varepsilon)\in\mathcal A_\varepsilon$ be an energy-bounded sequence such that $(Q_\varepsilon,\phi_\varepsilon)\to(Q,A)$ with $(Q,A)\in\mathcal A_0^{\mathrm{reg}}$. Then
\[
    \liminf_{\varepsilon\to0}E_\varepsilon^{\mathrm{mix}} \geq\frac{w_p}{3} \int_{\partial A} G(Q^-,\nu_A)\,d\mathcal H^2.
\]
\end{proposition}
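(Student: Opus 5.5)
The plan is to reduce the mixing energy pointwise to the total variation of the combined transition measure $\Lambda_\varepsilon$ from Lemma~\ref{lem:transition-measure}, and then to conclude by lower semicontinuity of the total variation under weak-$*$ convergence. For a.e. $x$, Young's inequality gives
\[
    \varepsilon^{-1}W(\phi_\varepsilon)+\varepsilon|\nabla\phi_\varepsilon|^2+\omega\varepsilon|A(Q_\varepsilon)\nabla\phi_\varepsilon|^2
    \geq 2\sqrt{W(\phi_\varepsilon)}\,\bigl(|\nabla\phi_\varepsilon|^2+\omega|A(Q_\varepsilon)\nabla\phi_\varepsilon|^2\bigr)^{1/2}.
\]
The right-hand side equals $2F(Q_\varepsilon,\nabla\Phi(\phi_\varepsilon))$, since $\nabla\Phi(\phi_\varepsilon)=\sqrt{W(\phi_\varepsilon)}\nabla\phi_\varepsilon$. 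This in turn is exactly $2$ times the Euclidean norm of the $\mathbb R^3\times\mathbb R^3$-valued density of $\Lambda_\varepsilon=(\nabla\Phi(\phi_\varepsilon),\sqrt\omega A(Q_\varepsilon)\nabla\Phi(\phi_\varepsilon))\mathcal L^3$. Hence
\[
    E_\varepsilon^{\rm mix}\ge 2w_p|\Lambda_\varepsilon|(\Omega).
\]

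Next I pass to the limit. Lemma~\ref{lem:transition-measure} gives $\Lambda_\varepsilon\stackrel{*}{\rightharpoonup}\Lambda$ in $\mathcal M(\Omega;\mathbb R^6)$; uniform boundedness follows from \eqref{eq:phase-transition-measure} and the bound on $|\mu_\varepsilon|$. Lower semicontinuity of the total variation on open sets then yields $\liminf_\varepsilon|\Lambda_\varepsilon|(\Omega)\ge|\Lambda|(\Omega)$. Since $D\chi_A=-\nu_A\mathcal H^2\mres\partial A$ for the $C^2$ set $A$, the limit measure is
\[
    \Lambda=-c_0\bigl(\nu_A,\sqrt\omega A(Q^-)\nu_A\bigr)\mathcal H^2\mres\partial A,
\]
whose total variation is $c_0\int_{\partial A}\bigl(1+\omega|A(Q^-)\nu_A|^2\bigr)^{1/2}d\mathcal H^2=c_0\int_{\partial A}G(Q^-,\nu_A)\,d\mathcal H^2$. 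With $c_0=1/6$ this gives $2w_pc_0=w_p/3$, which is the claimed constant.

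I expect no real obstacle here: the analytic difficulty, namely identifying the weak-$*$ limit of $A(Q_\varepsilon)D\Phi(\phi_\varepsilon)$ without $H^1_{\rm loc}$ compactness of $Q_\varepsilon$, has already been absorbed into Lemma~\ref{lem:transition-measure}. The only care needed is to pass to a subsequence realizing the $\liminf$ before invoking the lemma. One should also note that the lemma's convergence holds along any subsequence after extraction, so the limit is independent of the chosen subsequence. Finally, the pointwise Young step must be written with the joint norm $(|\xi|^2+\omega|A\xi|^2)^{1/2}$ rather than splitting the two gradient terms, so that the coupled density $G$ emerges rather than a sum of separate costs.
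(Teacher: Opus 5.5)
Your proposal is correct and follows the paper's proof essentially verbatim. In both, the pointwise arithmetic--geometric mean bound with the joint norm gives $E_\varepsilon^{\mathrm{mix}}\ge 2w_p|\Lambda_\varepsilon|(\Omega)$, and Lemma~\ref{lem:transition-measure}, weak-$*$ lower semicontinuity of total variation, and the explicit form of $\Lambda$ on $\partial A$ then yield the constant $2w_pc_0=w_p/3$.
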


\begin{proof}
The arithmetic--geometric mean inequality and the definitions of $\Phi$, $F$, and $\Lambda_\varepsilon$ give
\[
    E_\varepsilon^{\mathrm{mix}}\geq2w_p \int_\Omega F\bigl(Q_\varepsilon,\nabla\Phi(\phi_\varepsilon)\bigr)\,dx= 2w_p|\Lambda_\varepsilon|(\Omega).
\]
Therefore, by \eqref{eq:combined-transition-measure} and the weak-$*$ lower semicontinuity of total variation,
\[
    \liminf_{\varepsilon\to0} E_\varepsilon^{\mathrm{mix}}\geq 2w_p|\Lambda|(\Omega)= 2w_pc_0\int_{\partial A}G(Q^-,\nu_A)\,d\mathcal H^2=\frac{w_p}{3}\int_{\partial A} G(Q^-,\nu_A)\,d\mathcal H^2.
\]
\end{proof}

\subsection{Exterior tensor relaxation}
The exterior-relaxation energy is estimated along normal slices of a tubular neighborhood of $\partial A$. Since small high-phase intrusions may destroy the pointwise coercivity of the void potential, we first establish a one-dimensional capacity estimate that is stable under such exceptional sets.
\begin{lemma}[One-dimensional capacity comparison]\label{lem:one-dimensional-capacity-comparison}
Let \(\delta>0\), \(0<\sigma<\delta\), \(\kappa>0\), \(c>0\), and \(\eta\in(0,1/2)\). Set $\lambda_\eta:=1-\eta$. Let \(H\subset(0,\delta)\) be a measurable set with $|H|\leq \sigma$. For \(\alpha_0>0\) and \(q_0\in\mathbb R^m\), define
\begin{align*}
    \mathcal C_H(q_0):=\inf_{\substack{q\in H^1(0,\delta;\mathbb R^m)\\ q(0)=q_0}}\Bigg\{\int_H \alpha_0 |q'|^2\,dt+c\int_{(0,\delta)\setminus H}\left(\kappa |q'|^2+\kappa^{-1}\lambda_\eta^2 |q|^2\right)\,dt\Bigg\}.
\end{align*}
Then $\mathcal C_H(q_0)\ge\mu_{\kappa,\eta,\sigma,\delta}|q_0|^2$, where
\begin{align*}
    \mu_{\kappa,\eta,\sigma,\delta}:=\frac{A_\sigma B_{\kappa,\eta,\sigma,\delta}}{A_\sigma+B_{\kappa,\eta,\sigma,\delta}},\qquad A_\sigma:=\frac{\alpha_0}{\sigma},\quad B_{\kappa,\eta,\sigma,\delta}:=c\lambda_\eta\tanh\left(\frac{\lambda_\eta(\delta-\sigma)}{\kappa}\right).
\end{align*}
Consequently,
\begin{align*}
    \lim_{\eta\downarrow0}\lim_{\sigma\downarrow0}\liminf_{\kappa\downarrow0}\mu_{\kappa,\eta,\sigma,\delta}=c.
\end{align*}
\end{lemma}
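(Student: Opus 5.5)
Write $G:=(0,\delta)\setminus H$. The plan is to reduce to a series-resistance comparison in which the exceptional set $H$ is rearranged into an interval at the origin. Fix an admissible $q$ with $q(0)=q_0$.

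\emph{Step 1 (the exceptional set).} For any measurable $H$ with $|H|\le\sigma$, Cauchy--Schwarz on $H$ gives, for every $t$,
\[
\Bigl|\int_{(0,t)\cap H}q'\,ds\Bigr|^2\le |H|\int_H|q'|^2\,ds\le \sigma\int_H|q'|^2.
\]
Hence the part of the energy carried by $H$ is at least $A_\sigma\,\sup_t|\int_{(0,t)\cap H}q'|^2$.

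\emph{Step 2 (pushing $H$ to the left).} Define the rearrangement $\tau(t):=|(0,t)\cap G|$, which maps $G$ measure-preservingly onto $(0,|G|)$ with $|G|\ge\delta-\sigma$. Then define $p$ on $(0,|G|)$ by $p(\tau(t)):=q(t)-\int_{(0,t)\cap H}q'$. This $p$ is absolutely continuous, since it is built from the increments of $q$ on $G$ only. We have $p'(\tau(t))=q'(t)$ for a.e. $t\in G$ and $p(0)=q_0$.

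The potential term is what needs care. We know $|q(t)|\ge |p(\tau(t))|-|h(t)|$, where $h(t)=\int_{(0,t)\cap H}q'$ satisfies $|h|\le h_*:=\sup_t|h(t)|$ uniformly. This is where the $\sup$ from Step 1 is used.

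\emph{Step 3 (the bound).} I would rather avoid the error term in Step 2 by a cleaner route: split at the jump value. Set $p_0:=p(0)-h$-shift, i.e. view $p$ as starting at $q_0-w$ for a suitable constant $w$ with $|w|\le h_*$. More precisely, I would prove directly the lower bound
\[
\int_H\alpha_0|q'|^2+c\int_G(\kappa|q'|^2+\kappa^{-1}\lambda_\eta^2|q|^2)\ \ge\ \min_{w}\Bigl\{A_\sigma|w|^2+B\,|q_0-w|^2\Bigr\}=\frac{A_\sigma B}{A_\sigma+B}|q_0|^2 .
\]
This is the configuration where $H=(0,\sigma)$ and $q$ drops by $w$ on $H$ and then relaxes on $(\sigma,\delta)$.

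The relaxation cost uses the classical one-dimensional computation: on an interval of length $L\ge\delta-\sigma$ with initial value $v$,
\[
\inf\int_0^L(\kappa|p'|^2+\kappa^{-1}\lambda^2|p|^2)=\lambda|v|^2\tanh(\lambda L/\kappa)\cdot
\]
(up to the factor convention). This follows from the Euler--Lagrange equation with $\cosh$ solution and a free right endpoint, and it is monotone in $L$. Multiplying by $c$ gives $B$.

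The main obstacle is making the rearrangement rigorous when $H$ is scattered. Since $|q|$ on $G$ is not $|p\circ\tau|$, the $h$-shift mixes in. To handle it I would compare pointwise via $|q|^2\ge(1-\theta)|p\circ\tau|^2-\theta^{-1}h_*^2$ and absorb the error into the $A_\sigma$ term. Alternatively, and more cleanly, I would minimize jointly over $q$ for fixed $H$: the Euler--Lagrange structure shows the potential term only helps once $|q|$ decreases. A monotonicity argument then shows that moving all of $H$ to the left of $G$ does not increase the energy, because the potential is evaluated at values that are larger when the free drop occurs later. I expect this monotone-rearrangement step to need the most care.

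\emph{Step 4 (the limit).} Finally, as $\kappa\downarrow0$ we get $\tanh\to1$, so $B\to c\lambda_\eta$. As $\sigma\downarrow0$ we get $A_\sigma\to\infty$, so $\mu\to c\lambda_\eta$. As $\eta\downarrow0$ we get $\lambda_\eta\to1$, which yields the limit $c$.
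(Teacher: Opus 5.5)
There is a genuine gap in your proposal. Your outer frame matches the paper's: reduce to $H=(0,|H|)$, evaluate $\min_w\{A|w|^2+B|q_0-w|^2\}=\frac{AB}{A+B}|q_0|^2$, use that $\alpha_0/h$ and $\tanh(\lambda_\eta(\delta-h)/\kappa)$ are nonincreasing in $h$ while $AB/(A+B)$ is increasing in each argument, then take the limits. Step~1, the $\tanh$ formula and Step~4 are fine.

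The gap is the reduction itself, which is the entire content of the lemma, and you do not prove it. Your Step~2 profile $p(\tau(t))=q(t)-h(t)=q_0+\int_{(0,t)\cap G}q'$ is what you get by moving $H$ to the \emph{right} end of $(0,\delta)$. For a nonincreasing nonnegative $q$ one has $p\circ\tau\ge q$ on $G$, so this comparison runs the wrong way for a lower bound. Repair (a) cannot work. Integrating the error $\theta^{-1}h_*^2$ against the weight $c\kappa^{-1}\lambda_\eta^2$ over $G$ costs $c\,\theta^{-1}\lambda_\eta^2\kappa^{-1}|G|\,h_*^2$, while Step~1 only supplies $\alpha_0\sigma^{-1}h_*^2$. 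With $\sigma,\delta$ fixed and $\kappa\downarrow0$, which is exactly the regime where the lemma is applied, this error cannot be absorbed. Route (b) states the right heuristic but asserts the monotonicity instead of proving it.

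The paper closes the gap with a transfer-map argument. For $H$ a finite union of intervals, the value function is $m_H(s)z^2$. A high interval of length $\ell$ acts on $m$ by $\mathsf H_\ell(m)=\alpha_0m/(\alpha_0+\ell m)$, and a low interval by an explicit linear-fractional map $\mathsf L_\ell$ involving $\tanh(\lambda_\eta\ell/\kappa)$. The computed inequality $\mathsf H_h(\mathsf L_\ell(m))\le\mathsf L_\ell(\mathsf H_h(m))$ lets all high intervals be moved to the left, and general measurable $H$ is then handled by the stability Lemma~\ref{lem:capacity-stability}.

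Your heuristic can also be made rigorous directly, for every measurable $H$ at once:
\begin{enumerate}
\item Work componentwise with $q_0>0$. Replace $q$ by $\max\{\min_{[0,t]}q,\,0\}$. This keeps $q(0)=q_0$ and does not increase $|q'|$ or $|q|$ pointwise, so you may assume $q$ is nonincreasing and nonnegative.
\item Let $\tilde q$ perform all $H$-increments of $q$ on $(0,|H|)$ and then all $G$-increments on $(|H|,\delta)$. Transport them by the measure-preserving maps $t\mapsto|(0,t)\cap H|$ on $H$ and $t\mapsto|H|+|(0,t)\cap G|$ on $G$.
\item At the point corresponding to $t\in G$ you get $\tilde q=q_0+h(\delta)+g(t)$, with $g(t):=\int_{(0,t)\cap G}q'$. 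The shift is by the \emph{total} drop $h(\delta)$, not the running drop $h(t)$, and monotonicity gives $0\le q(\delta)\le\tilde q\le q$ there.
\item Both gradient terms are preserved and the potential term does not increase. Hence $\mathcal C_H(q_0)\ge\mathcal C_{(0,|H|)}(q_0)$, and your Step~3 computation finishes the proof.
\end{enumerate}
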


\begin{proof}
Since the functional is diagonal in the components of \(q\), it is enough to prove the result for scalar-valued \(q\). The vector-valued statement then follows by applying the scalar estimate to each component and summing.

We first prove the comparison for the case where \(H\) is a finite union of intervals. The general measurable case follows by approximation, as explained at the end of the proof.

For \(s\in[0,\delta]\) and \(z\in\mathbb R\), define the value function
\begin{align*}
    V_H(s,z):=\inf_{\substack{q\in H^1(s,\delta)\\ q(s)=z}}\int_s^\delta\left(a_H(t)|q'|^2+b_H(t)|q|^2\right)\,dt,
\end{align*}
where
\begin{align*}
    a_H(t):=
    \begin{cases}
        \alpha_0, & t\in H,\\
        c\kappa, & t\notin H,
    \end{cases}
    \qquad
    b_H(t):=
    \begin{cases}
        0, & t\in H,\\
        c\kappa^{-1}\lambda_\eta^2, & t\notin H.
    \end{cases}
\end{align*}
By homogeneity of the quadratic functional, there exists \(m_H(s)\ge0\) such that
\begin{align*}
    V_H(s,z)=m_H(s)z^2.
\end{align*}
In particular, $\mathcal C_H(q_0)=m_H(0)q_0^2$.

We now compute how \(m_H\) changes across homogeneous intervals. If an interval of length \(\ell\) is contained in \(H\), then on this interval the energy is only \(\alpha_0|q'|^2\). If the value at the right endpoint is \(m z^2\), then the effective left endpoint value is
\begin{align*}
    \mathsf H_\ell(m):=\frac{\alpha_0 m}{\alpha_0+\ell m}.
\end{align*}
Indeed,
\begin{align*}
    \inf_{z_1\in\mathbb R}\left\{\frac{\alpha_0}{\ell}|z_0-z_1|^2+m z_1^2\right\}=\frac{\alpha_0 m}{\alpha_0+\ell m}z_0^2.
\end{align*}

If an interval of length \(\ell\) is contained in \((0,\delta)\setminus H\), set $M:=c\lambda_\eta$, $\mu:=\frac{\lambda_\eta}{\kappa}$. On such an interval, the energy is
\begin{align*}
    c\int\left(\kappa |q'|^2+\kappa^{-1}\lambda_\eta^2 |q|^2\right)\,dt.
\end{align*}
If the value at the right endpoint is \(m z^2\), then the effective left endpoint value is
\begin{align*}
    \mathsf L_\ell(m):=M\,\frac{m+M\tanh(\mu\ell)}{M+m\tanh(\mu\ell)}.
\end{align*}
This follows by solving the Euler--Lagrange equation $q''=\mu^2 q$ with the natural boundary condition $c\kappa q'(\ell)+m q(\ell)=0$.

The key point is the following adjacent-exchange inequality: for every \(h,\ell>0\) and every \(m\ge0\),
\begin{align}\label{eq:adjacent-exchange}
    \mathsf H_h\bigl(\mathsf L_\ell(m)\bigr)\le\mathsf L_\ell\bigl(\mathsf H_h(m)\bigr).
\end{align}
Writing $\tau:=\tanh(\mu\ell)$, a direct computation gives
\begin{align*}
    &L_\ell(H_h(m))-H_h(L_\ell(m))\\
    &=\frac{M^2h\tau\bigl(M^2\alpha_0\tau+M^2hm\tau+2M\alpha_0m+Mhm^2+\alpha_0m^2\tau\bigr)}{\bigl(M\alpha_0+Mhm+\alpha_0m\tau\bigr)\bigl(M^2h\tau+M\alpha_0+Mhm+\alpha_0m\tau\bigr)}\geq0.
\end{align*}
Hence moving a high-phase interval to the left cannot increase the capacity.

Thus, if a low-phase interval lies immediately to the left of a high-phase interval, exchanging their order and moving the high-phase interval to the left cannot increase the capacity. Repeated application of \eqref{eq:adjacent-exchange} shows that, among all finite unions \(H\) with fixed measure \(h:=|H|\), the smallest capacity is attained when $H=(0,h)$. Therefore, $\mathcal C_H(q_0)\ge \mathcal C_{(0,h)}(q_0)$.

We now compute \(\mathcal C_{(0,h)}\). Let $z=q(h)$, on \((0,h)\),
\begin{align*}
    \int_0^h \alpha_0|q'|^2\,dt\ge\frac{\alpha_0}{h}|q_0-z|^2.
\end{align*}
On \((h,\delta)\), the standard one-dimensional estimate gives
\begin{align*}
\begin{aligned}
    c\inf_{\substack{q\in H^1(h,\delta)\\q(h)=z}}\int_h^\delta\left(\kappa |q'|^2+\kappa^{-1}\lambda_\eta^2 |q|^2\right)\,dt=c\lambda_\eta\tanh\left(
    \frac{\lambda_\eta(\delta-h)}{\kappa}\right)z^2.
\end{aligned}
\end{align*}
Hence
\begin{align*}
\begin{aligned}
    \mathcal C_{(0,h)}(q_0)&=\inf_{z\in\mathbb R}\left\{\frac{\alpha_0}{h}|q_0-z|^2+c\lambda_\eta\tanh\left(\frac{\lambda_\eta(\delta-h)}{\kappa}\right)z^2\right\}=\frac{A_hB_{\kappa,\eta,h,\delta}}{A_h+B_{\kappa,\eta,h,\delta}}q_0^2,
\end{aligned}
\end{align*}
where
\begin{align*}
    A_h:=\frac{\alpha_0}{h},\quad B_{\kappa,\eta,h,\delta}:=c\lambda_\eta\tanh\left(\frac{\lambda_\eta(\delta-h)}{\kappa}\right).
\end{align*}

It remains to compare \(h\le \sigma\). Define
\begin{align*}
    F(h):=\frac{A_hB_{\kappa,\eta,h,\delta}}{A_h+B_{\kappa,\eta,h,\delta}}=\frac{\alpha_0 c\lambda_\eta\tanh\left(\frac{\lambda_\eta(\delta-h)}{\kappa}\right)}{\alpha_0+h c\lambda_\eta\tanh\left(\frac{\lambda_\eta(\delta-h)}{\kappa}\right)}.
\end{align*}
Set
\[
    B(h):=c\lambda_\eta\tanh\left(\frac{\lambda_\eta(\delta-h)}{\kappa}\right),\qquad F(h)=\frac{\alpha_0B(h)}{\alpha_0+hB(h)}.
\]
Since $B^\prime(h)<0$,
\[
    F^\prime(h)=\frac{\alpha_0\bigl(\alpha_0B^\prime(h)-B(h)^2\bigr)}{\bigl(\alpha_0+hB(h)\bigr)^2}<0.
\]
Thus $F$ is decreasing on $(0,\delta)$. Since \(h=|H|\le \sigma\),
\begin{align*}
    \mathcal C_H(q_0)\ge F(h)q_0^2\ge F(\sigma)q_0^2=\mu_{\kappa,\eta,\sigma,\delta}|q_0|^2.
\end{align*}
The preceding argument proves the estimate when \(H\) is a finite union of intervals. For a general measurable set \(H\subset(0,\delta)\), choose finite unions of intervals \(H_n\) such that
\[
    \chi_{H_n}\to\chi_H\qquad\text{strongly in }L^1(0,\delta),
\]
and $|H_n|\le \sigma$ for every $n$. By Lemma~\ref{lem:capacity-stability},
\[
    C_{H_n}(q_0)\to C_H(q_0).
\]
Since
\[
    C_{H_n}(q_0)\ge\mu_{\kappa,\eta,\sigma,\delta}|q_0|^2\qquad\text{for every }n,
\]
passing to the limit yields
\[
    C_H(q_0)\ge\mu_{\kappa,\eta,\sigma,\delta}|q_0|^2.
\]

Finally,
\[
    B_{\kappa,\eta,\sigma,\delta}\to c(1-\eta)\qquad\text{as }\kappa\downarrow0,\qquad A_\sigma=\frac{\alpha_0}{\sigma}\to+\infty\qquad\text{as }\sigma\downarrow0.
\]
Therefore,
\[
    \lim_{\sigma\downarrow0}\liminf_{\kappa\downarrow0}\mu_{\kappa,\eta,\sigma,\delta}=c(1-\eta).
\]
Letting \(\eta\downarrow0\), we obtain
\[
    \lim_{\eta\downarrow0}\lim_{\sigma\downarrow0}\liminf_{\kappa\downarrow0}\mu_{\kappa,\eta,\sigma,\delta}=c.
\]
This completes the proof.
\end{proof}

We apply Lemma~\ref{lem:one-dimensional-capacity-comparison} on normal slices. Averaging over inner parallel surfaces avoids the need for pointwise trace convergence of $Q_\varepsilon$.

\begin{proposition}[Localized exterior-relaxation bound]\label{prop:localized-exterior-relaxation}
Let $0<\delta,\rho<\rho_0$, let $\eta\in(0,1/2)$, and let $\beta\in(0,1]$. Define
\begin{align*}
    &R_\eps^{\delta,\rho,\eta,\beta}:=\\
    &\quad\beta\int_{N_{\delta,\rho}\cap\{\phi_\eps>\eta\}}\phi_\eps^2|\nabla Q_\eps|^2\,dx+\frac{w_v}{2}\int_{N_{\delta,\rho}\cap\{\phi_\eps\le\eta\}}\left(\kappa_\eps|\nabla Q_\eps|^2+\kappa_\eps^{-1}(1-\phi_\eps)^2|Q_\eps|^2\right)dx.
\end{align*}
For $Q\in H^1(A;\mathcal S_0)$, set
\begin{equation}\label{eq:averaged-inner-trace}
    \mathcal B_\delta(Q):=\frac{2}{\delta}\int_{\delta/2}^{\delta}\int_{\partial A}|Q(\Psi(y,-s))|^2\,d\mathcal H^2(y)\,ds.
\end{equation}
Then, for every energy-bounded sequence $(Q_\eps,\phi_\eps)\to (Q,A)$ with $(Q,A)\in\mathcal A_0^{\rm reg}$, one has
\begin{equation}\label{eq:localized-exterior-relaxation}
    \liminf_{\eps\to0}R_\eps^{\delta,\rho,\eta,\beta}\ge c_{\delta,\rho}\,\mu_{\delta,\eta,\beta}\,\mathcal B_\delta(Q),
\end{equation}
where
\begin{equation}\label{eq:localized-relaxation-coefficient}
    \mu_{\delta,\eta,\beta}:=\frac{\displaystyle\frac{\beta\eta^2}{\delta}\frac{w_v}{2}(1-\eta)}{\displaystyle\frac{\beta\eta^2}{\delta}+\frac{w_v}{2}(1-\eta)}.
\end{equation}
Consequently,
\begin{equation}\label{eq:exterior-relaxation-trace}
    \lim_{\rho\downarrow0}\liminf_{\delta\downarrow0}\liminf_{\eps\to0}R_\eps^{\delta,\rho,\eta,\beta}\ge\frac{w_v}{2}(1-\eta)\int_{\partial A}|Q^-|^2\,d\mathcal H^2.
\end{equation}
In particular,
\begin{equation}\label{eq:exterior-relaxation-final}
    \lim_{\eta\downarrow0}\lim_{\rho\downarrow0}\liminf_{\delta\downarrow0}\liminf_{\eps\to0}R_\eps^{\delta,\rho,\eta,\beta}\ge\frac{w_v}{2}\int_{\partial A}|Q^-|^2\,d\mathcal H^2.
\end{equation}
\end{proposition}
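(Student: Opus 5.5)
We slice the region $N_{\delta,\rho}$ along normal lines and apply Lemma~\ref{lem:one-dimensional-capacity-comparison} on each slice, with a variable starting point $s\in(\delta/2,\delta)$ inside $A$. Writing $q_\eps^y(t):=Q_\eps(\Psi(y,t))$ and using \eqref{eq:tubular-change-of-variables}, \eqref{eq:normal-slice-gradient} and $J_A\ge c_{\delta,\rho}$ on $\partial A\times(-\delta,\rho)$, we get
\[
R_\eps^{\delta,\rho,\eta,\beta}\ge c_{\delta,\rho}\int_{\partial A}\int_{-\delta}^{\rho}\Bigl(\beta\eta^2|\partial_tq_\eps^y|^2\mathbf 1_{H^y_\eps}+\tfrac{w_v}{2}\bigl(\kappa_\eps|\partial_tq_\eps^y|^2+\kappa_\eps^{-1}(1-\eta)^2|q_\eps^y|^2\bigr)\mathbf 1_{(H^y_\eps)^c}\Bigr)dt\,d\mathcal H^2(y),
\]
where $H^y_\eps:=\{t:\phi_\eps(\Psi(y,t))>\eta\}$. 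Here we used $\phi_\eps^2\ge\eta^2$ on $H_\eps^y$ and $(1-\phi_\eps)^2\ge(1-\eta)^2$ on its complement. For each $s\in(\delta/2,\delta)$ we restrict to $t\in(-s,\rho)$, shift the variable so that the slice starts at $0$, and apply the lemma on an interval of length $\delta_1:=s+\rho$. The parameters are $\alpha_0=\beta\eta^2$, $c=w_v/2$, $\kappa=\kappa_\eps$, the initial value $q_0=q_\eps^y(-s)$, and exceptional set $H=H^y_\eps\cap(-s,\rho)$. The lemma gives a lower bound $\mu\,|q_\eps^y(-s)|^2$, where $\mu=AB/(A+B)$ with $A=\beta\eta^2/|H|$ and $B=\frac{w_v}{2}(1-\eta)\tanh(\cdot)$. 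This $\mu$ is decreasing in $|H|$ and increasing in $B$.

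The key observation is that we do not need $|H|$ small. It suffices to bound $|H|\le \delta+\sigma^y_\eps$, where $\sigma^y_\eps:=|H^y_\eps\cap(0,\rho)|$ is the exterior high-phase measure. Then $A\ge \beta\eta^2/(\delta+\sigma^y_\eps)$. I would rather use the lemma's proof (the interval $(0,h)$ is the worst case and $F(h)$ is decreasing) with $h\le s+\sigma_\eps^y\le\delta+\sigma^y_\eps$. The tanh argument is at least $\lambda_\eta(\rho-\sigma^y_\eps)/\kappa_\eps$. Since $\int_{\partial A}\sigma^y_\eps\,d\mathcal H^2\lesssim\eta^{-1}\int_{A^c}\phi_\eps\to0$, after a subsequence we have $\sigma^y_\eps\to0$ for $\mathcal H^2$-a.e.\ $y$. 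Hence $\liminf_\eps$ of the per-slice coefficient is at least $\mu_{\delta,\eta,\beta}$ as in \eqref{eq:localized-relaxation-coefficient}, because $\tanh\to1$ as $\kappa_\eps\to0$. Averaging over $s\in(\delta/2,\delta)$ with weight $2/\delta$, the slicewise energy does not depend on $s$ (each $s$-slice is a sub-interval of the full slice). So
\[
R_\eps\ge c_{\delta,\rho}\int_{\partial A}\mu^y_\eps\,\frac2\delta\int_{\delta/2}^\delta|Q_\eps(\Psi(y,-s))|^2\,ds\,d\mathcal H^2(y).
\]

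\textbf{Passing to the limit.} This is the main obstacle, since we only know $Q_\eps\to Q$ strongly in $L^1$ by \eqref{eq:strong-l1-Q}, and the coefficient $\mu^y_\eps$ converges only a.e. I would apply Fatou's lemma on $\partial A\times(\delta/2,\delta)$ with measure $J_A\,d\mathcal H^2\,ds$. Along a subsequence $Q_\eps\to Q$ a.e.\ in $U_\delta^{\rm int}$, so $Q_\eps(\Psi(y,-s))\to Q(\Psi(y,-s))$ for a.e.\ $(y,s)$ via the diffeomorphism $\Psi$, and $\liminf\mu^y_\eps\ge\mu_{\delta,\eta,\beta}$ a.e. Fatou's lemma then yields \eqref{eq:localized-exterior-relaxation}. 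No pointwise trace convergence is needed, because averaging over $s$ places all evaluations in the interior, where a.e.\ convergence holds.

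\textbf{Consequences.} For \eqref{eq:exterior-relaxation-trace}: as $\delta\downarrow0$, $\beta\eta^2/\delta\to\infty$, so $\mu_{\delta,\eta,\beta}\to\frac{w_v}{2}(1-\eta)$ and $c_{\delta,\rho}\to 1-C\rho$. Since $Q\in H^1(A)$ and $\partial A$ is $C^2$, the map $s\mapsto Q(\Psi(\cdot,-s))$ is continuous into $L^2(\partial A)$ with value $Q^-$ at $s=0$. Hence $\mathcal B_\delta(Q)\to\int_{\partial A}|Q^-|^2$. Letting $\rho\downarrow0$ gives $c_{\delta,\rho}\to1$, and then letting $\eta\downarrow0$ gives \eqref{eq:exterior-relaxation-final}.
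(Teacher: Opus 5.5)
Your proposal is correct. Its skeleton matches the paper's proof: slice $N_{\delta,\rho}$ along normal lines, apply Lemma~\ref{lem:one-dimensional-capacity-comparison} on $(-s,\rho)$ with $\alpha_0=\beta\eta^2$ and $c=w_v/2$, average over $s\in(\delta/2,\delta)$ so that only interior values of $Q_\varepsilon$ enter, and finish with continuity of $s\mapsto Q(\Psi(\cdot,-s))$ in $L^2(\partial A)$.

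The genuine difference is in how you pass to the limit $\varepsilon\to0$.

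\textbf{The paper's route.} It fixes a threshold $a\in(0,\rho)$ and restricts to the good set $\Sigma_{\varepsilon,\eta,a}$ of boundary points whose exterior high-phase measure is at most $a$. There it applies the lemma with the uniform choice $\sigma=s+a$. It then replaces $Q_\varepsilon$ by $T_M(Q_\varepsilon)$, for two reasons: strong $L^1$ convergence then upgrades to strong $L^2$ convergence, and the contribution of $\partial A\setminus\Sigma_{\varepsilon,\eta,a}$ is at most $M^2\mathcal H^2(\partial A\setminus\Sigma_{\varepsilon,\eta,a})\to0$. Finally it sends $\varepsilon\to0$, then $a\downarrow0$, then $M\to\infty$.

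\textbf{Your route.} You take the $y$-dependent choice $\sigma=s+\sigma^y_\varepsilon$. This gives a coefficient $\mu^y_\varepsilon$ that does not depend on $s$ and converges to $\mu_{\delta,\eta,\beta}$ for $\mathcal H^2$-a.e.\ $y$ along a subsequence. You then conclude by Fatou's lemma, using only a.e.\ convergence of $Q_\varepsilon$ in $U^{\rm int}_\delta$. Because the integrand is nonnegative, this removes the truncation, the auxiliary parameter $a$, and the exceptional-set estimate altogether. Your argument is therefore shorter and more elementary. The paper's version keeps a coefficient uniform in $y$ and stays within the truncation framework it uses elsewhere, but that framework is not actually needed for this step.

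\textbf{Points to state explicitly.}
\begin{itemize}
\item Extract first a subsequence realizing $\liminf_{\varepsilon}R_\varepsilon^{\delta,\rho,\eta,\beta}$, and only then the further subsequences giving a.e.\ convergence of $\sigma^y_\varepsilon$ and of $Q_\varepsilon$.
\item Set $\mu^y_\varepsilon:=0$ on slices with $\sigma^y_\varepsilon=\rho$. There the lemma's hypothesis $\sigma<s+\rho$ fails, but the bound is trivial.
\item As $\delta\downarrow0$ you only need $\liminf c_{\delta,\rho}\ge 1-C\rho$, not convergence to $1-C\rho$.
\end{itemize}
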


\begin{proof}
Fix a subsequence realizing the lower limit in \eqref{eq:localized-exterior-relaxation}. By \eqref{eq:strong-l1-Q}, after passing to a further subsequence, not relabeled, all subsequent convergences hold along this sequence.

For $y\in\partial A$ and $s\in(\delta/2,\delta)$, write
\[
    q_{\varepsilon,y}(t) :=Q_\varepsilon(\Psi(y,t)),\qquad\phi_{\varepsilon,y}(t):=\phi_\varepsilon(\Psi(y,t)).
\]
Set
\[
    H_{\varepsilon,\eta}^+(y):=\left\{t\in(0,\rho):\phi_{\varepsilon,y}(t)>\eta\right\}.
\]
By \eqref{eq:phase-field-L1} and \eqref{eq:tubular-change-of-variables},
\begin{equation}\label{eq:exterior-high-phase-slices}
    \int_{\partial A}|H_{\varepsilon,\eta}^+(y)|\,d\mathcal H^2(y)\leq\frac{C}{\eta}\int_{U_\rho^{\mathrm{ext}}}\phi_\varepsilon\,dx\to0.
\end{equation}

Fix $a\in(0,\rho)$, and let $\Sigma_{\varepsilon,\eta,a}:=\left\{y\in\partial A:|H_{\varepsilon,\eta}^+(y)|\le a\right\}$. Then by \eqref{eq:exterior-high-phase-slices}, we have
\begin{equation}\label{eq:good-normal-set}
    \mathcal H^2\bigl(\partial A\setminus\Sigma_{\varepsilon,\eta,a}\bigr)\to0.
\end{equation}

Fix $M\in\mathbb N$. For $y\in\Sigma_{\varepsilon,\eta,a}$ and $s\in(\delta/2,\delta)$, set
\[
    H_{\varepsilon,\eta}^s(y):=\left\{t\in(-s,\rho):\phi_{\varepsilon,y}(t)>\eta\right\},\qquad q_{\varepsilon,y}^M:=T_M(q_{\varepsilon,y}).
\]
Then
\[
    |H_{\varepsilon,\eta}^s(y)|
\leq s+a\leq\delta+a.
\]

Define
\begin{align*}
    &\mathcal E_{\varepsilon,y}^s:=\\
    &\beta\int_{H_{\varepsilon,\eta}^s(y)}\phi_{\varepsilon,y}^2|q_{\varepsilon,y}'|^2\,dt+\frac{w_v}{2}\int_{(-s,\rho)\setminus H_{\varepsilon,\eta}^s(y)}\left(\kappa_\varepsilon|q_{\varepsilon,y}'|^2+\kappa_\varepsilon^{-1}(1-\phi_{\varepsilon,y})^2|q_{\varepsilon,y}|^2\right)\,dt .
\end{align*}

By \eqref{eq:truncation-gradient}, $|T_M(q)|\leq|q|$, and the definition of $H_{\varepsilon,\eta}^s(y)$,
\begin{align*}
    &\mathcal E_{\varepsilon,y}^s\geq{}\\
    &\beta\eta^2\int_{H_{\varepsilon,\eta}^s(y)}\left|(q_{\varepsilon,y}^M)'\right|^2\,dt+\frac{w_v}{2}\int_{(-s,\rho)\setminus H_{\varepsilon,\eta}^s(y)}\left(\kappa_\varepsilon\left|(q_{\varepsilon,y}^M)'\right|^2+\kappa_\varepsilon^{-1}(1-\eta)^2|q_{\varepsilon,y}^M|^2\right)\,dt.
\end{align*}
After translating $(-s,\rho)$ to $(0,s+\rho)$, Lemma~\ref{lem:one-dimensional-capacity-comparison} applies with $\alpha_0=\beta\eta^2$, $c=\frac{w_v}{2}$, $\sigma=s+a$ and $\delta=s+\rho$, it gives
\begin{equation}\label{eq:slice-capacity-bound}
    \mathcal E_{\varepsilon,y}^s\geq\mu_{\varepsilon,\delta,\rho,\eta,\beta,a}\left|T_M(q_{\varepsilon,y}(-s))\right|^2,
\end{equation}
where
\begin{align*}
    \mu_{\varepsilon,\delta,\rho,\eta,\beta,a}:=\frac{A_{\delta,a,\eta,\beta}B_{\varepsilon,\rho,a,\eta}}{A_{\delta,a,\eta,\beta}+B_{\varepsilon,\rho,a,\eta}}
\end{align*}
and
\begin{align*}
    A_{\delta,a,\eta,\beta}:=\frac{\beta\eta^2}{\delta+a},\quad B_{\varepsilon,\rho,a,\eta}:=\frac{w_v}{2}(1-\eta)\tanh\!\left(\frac{(1-\eta)(\rho-a)}
     {\kappa_\varepsilon}\right).
\end{align*}
Here we used $s+a\leq\delta+a$ and the monotonicity of
$AB/(A+B)$ in $A$.

By \eqref{eq:tubular-jacobian-lower-bound}, \eqref{eq:normal-slice-gradient}, and \eqref{eq:slice-capacity-bound}, for a.e. $s\in(\delta/2,\delta)$,
\[
    R_{\varepsilon}^{\delta,\rho,\eta,\beta}\geq c_{\delta,\rho}\mu_{\varepsilon,\delta,\rho,\eta,\beta,a}\int_{\Sigma_{\varepsilon,\eta,a}}\left|T_M(Q_\varepsilon(\Psi(y,-s)))\right|^2\,d\mathcal H^2(y).
\]
Averaging in $s$ gives
\begin{equation}\label{eq:averaged-slice-capacity-bound}
    R_{\varepsilon}^{\delta,\rho,\eta,\beta}\geq c_{\delta,\rho}\mu_{\varepsilon,\delta,\rho,\eta,\beta,a}\frac{2}{\delta}\int_{\delta/2}^{\delta}\int_{\Sigma_{\varepsilon,\eta,a}}\left|T_M(Q_\varepsilon(\Psi(y,-s)))\right|^2\,d\mathcal H^2(y)\,ds.
\end{equation}

By \eqref{eq:strong-l1-Q} and the Lipschitz continuity of \(T_M\),
\[
    T_M(Q_\varepsilon)\to T_M(Q) \qquad\text{strongly in }L^1(\Omega).
\]
Since both tensors are bounded in absolute value by \(M\),
\begin{align*}
    \|T_M(Q_\varepsilon)-T_M(Q)\|_{L^2(\Omega)}^2\leq2M\|T_M(Q_\varepsilon)-T_M(Q)\|_{L^1(\Omega)}\to0.
\end{align*}
Using the tubular change of variables and the uniform upper and lower bounds for \(J_A\), it follows that
\[
    T_M(Q_\varepsilon(\Psi(y,-s)))\to T_M(Q(\Psi(y,-s)))
\]
strongly in \(L^2(\partial A\times(\delta/2,\delta))\).

It remains to remove the varying exceptional set \(\partial A\setminus\Sigma_{\varepsilon,\eta,a}\). Since \(|T_M|\leq M\), we have
\begin{align*} 
    \frac{2}{\delta}\int_{\delta/2}^{\delta}\int_{\partial A\setminus\Sigma_{\varepsilon,\eta,a}}\left|T_M(Q_\varepsilon(\Psi(y,-s)))\right|^2\,d\mathcal H^2(y)\,ds\leq M^2\mathcal H^2\bigl(\partial A\setminus\Sigma_{\varepsilon,\eta,a}\bigr)\to0
\end{align*}
by \eqref{eq:good-normal-set}. The same estimate holds with \(Q_\varepsilon\) replaced by \(Q\). Consequently,
\begin{align}
    \frac{2}{\delta}\int_{\delta/2}^{\delta}\int_{\Sigma_{\varepsilon,\eta,a}}\left|T_M(Q_\varepsilon(\Psi(y,-s)))\right|^2\,d\mathcal H^2(y)\,ds\to B_{\delta,M}(Q),\label{eq:averaged-truncated-trace-limit}
\end{align}
where
\[
    B_{\delta,M}(Q):=\frac{2}{\delta}\int_{\delta/2}^{\delta}\int_{\partial A}|T_M(Q(\Psi(y,-s)))|^2\,d\mathcal H^2(y)\,ds.
\]

Since $B_{\varepsilon,\rho,a,\eta}\to w_v(1-\eta)/2$ as $\eps\to 0$, \eqref{eq:averaged-slice-capacity-bound} and \eqref{eq:averaged-truncated-trace-limit} yield
\begin{align*}
    \liminf_{\varepsilon\to0}R_{\varepsilon}^{\delta,\rho,\eta,\beta}\geq c_{\delta,\rho}\frac{\frac{\beta\eta^2}{\delta+a}\frac{w_v}{2}(1-\eta)}{\frac{\beta\eta^2}{\delta+a}+\frac{w_v}{2}(1-\eta)}B_{\delta,M}(Q).
\end{align*}
For fixed $\delta$, $\rho$, $\eta$, and $\beta$, we first let $\varepsilon\to0$, then $a\downarrow0$, and finally $M\to\infty$. By monotone convergence, this proves \eqref{eq:localized-exterior-relaxation}.

By the continuity of the interior trace in tubular coordinates,
\begin{equation*}
    B_\delta(Q)\to\int_{\partial A}|Q^-|^2\,d\mathcal H^2\qquad\text{as }\delta\downarrow0.
\end{equation*}
Together with $c_{\delta,\rho}=1+O(\delta+\rho)$ and \eqref{eq:localized-relaxation-coefficient}, we first let $\delta\downarrow0$ and then $\rho\downarrow0$, with $\eta$ and $\beta$ fixed, to obtain \eqref{eq:exterior-relaxation-trace}. Finally, letting $\eta\downarrow0$ gives \eqref{eq:exterior-relaxation-final}.
\end{proof}

The same competition becomes singular when $\kappa_\eps\ll\eps$. In this regime, the void relaxation acts on a scale shorter than the phase transition. The following estimate shows that any nonzero tensor trace would then produce unbounded energy.

\begin{lemma}[Vanishing interfacial trace in the fast-relaxation regime]
\label{lem:strong-anchoring}
Assume $\frac{\kap}{\eps}\to 0$. Then $Q^-=0$ $\mathcal H^2-$ a.e. on $\partial A$. 
\end{lemma}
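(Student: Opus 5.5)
The plan is to exploit the coupling between the Modica--Mortola gradient term and the void potential inside the phase-transition layer. When $\kap\ll\eps$, this coupling makes any nonzero trace infinitely expensive. First, by the arithmetic--geometric mean inequality, pointwise a.e.
\[
\eps|\nabla\phi_\eps|^2+\frac{w_v}{2w_p}\kappa_\eps^{-1}(1-\phi_\eps)^2|Q_\eps|^2\ \ge\ c\,\Bigl(\frac{\eps}{\kappa_\eps}\Bigr)^{1/2}(1-\phi_\eps)|Q_\eps|\,|\nabla\phi_\eps| .
\]
Set $\Gamma(s):=s-s^2/2$, so that $\Gamma'(s)=1-s$ and $\Gamma(1)=\tfrac12$. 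Since $|T_M(q)|\le|q|$, the energy bound then gives, for every $M\in\mathbb N$,
\[
\int_\Omega |T_M(Q_\eps)|\,|\nabla\Gamma(\phi_\eps)|\,dx\ \le\ C\Bigl(\frac{\kappa_\eps}{\eps}\Bigr)^{1/2}\longrightarrow0 .
\]
Hence the $\mathcal S_0\otimes\mathbb R^3$-valued measures $\mu^M_\eps:=T_M(Q_\eps)D\Gamma(\phi_\eps)$ satisfy $|\mu^M_\eps|(\Omega)\to0$.

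The second step identifies the distributional limit of $\mu_\eps^M$, mimicking the proof of Lemma~\ref{lem:transition-measure} with $\Phi$ replaced by $\Gamma$ and $Q_\eps$ replaced by $T_M(Q_\eps)$. For a test field $\Psi\in C^1_c(\Omega;\mathcal S_0\otimes\mathbb R^3)$, integration by parts gives
\[
\langle\mu_\eps^M,\Psi\rangle=-\int_\Omega\Gamma(\phi_\eps)T_M(Q_\eps):\operatorname{div}\Psi\,dx-\int_\Omega\Gamma(\phi_\eps)\Psi:\nabla T_M(Q_\eps)\,dx .
\]
For the first term, $\Gamma(\phi_\eps)\to\tfrac12\chi_A$ strongly in every $L^p$ with $p<\infty$, since $\Gamma$ is continuous and bounded on $[0,1]$. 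Moreover $T_M(Q_\eps)\to T_M(Q)$ strongly in $L^1$ with a uniform $L^\infty$ bound, by \eqref{eq:strong-l1-Q}. So the first term converges to $-\tfrac12\int_A T_M(Q):\operatorname{div}\Psi\,dx$.

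For the second term, we split it as in Lemma~\ref{lem:transition-measure}. On $\Omega_\eps^+$ we write it through $\nabla V^M_\eps$, which converges weakly in $L^2$ to $\chi_A\nabla T_M(Q)$ by Proposition~\ref{prop:truncated-sbv-compactness}; combined with the strong $L^2$ convergence of $\Gamma(\phi_\eps)$, this term converges to $\tfrac12\int_A\Psi:\nabla T_M(Q)\,dx$. On $\Omega\setminus\Omega_\eps^+$ we use $\Gamma(s)\le s$ to bound the contribution by $C\|\Psi\|_\infty\|\phi_\eps\nabla Q_\eps\|_{L^2}\bigl(\int_{\Omega\setminus\Omega^+_\eps}\phi_\eps^2\bigr)^{1/2}$, which tends to zero. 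Since $T_M(Q)\in H^1(A)$ and $A$ has $C^2$ boundary, the divergence theorem then yields
\[
\mu^M_\eps\to\tfrac12\,T_M(Q^-)\otimes\nu_A\,\mathcal H^2\mres\partial A\quad\text{in }\mathcal D'(\Omega).
\]
Here we use that the trace of $T_M(Q)$ is $T_M(Q^-)$, because $T_M$ is Lipschitz. No truncation removal is needed, which is why we keep $M$ fixed.

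The conclusion follows by lower semicontinuity. A distributional limit of measures whose total variations tend to zero is the zero measure: for every $\Psi$ we have $|\langle\mu_\eps^M,\Psi\rangle|\le\|\Psi\|_\infty|\mu^M_\eps|(\Omega)\to0$. Therefore $T_M(Q^-)=0$ $\mathcal H^2$-a.e. on $\partial A$ for every $M\in\mathbb N$, and letting $M\to\infty$ gives $Q^-=0$ $\mathcal H^2$-a.e. on $\partial A$.

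The main obstacle is the identification in the second step, specifically the term $\Gamma(\phi_\eps)\nabla T_M(Q_\eps)$, since $\nabla Q_\eps$ is controlled only with the weight $\phi_\eps$. This is handled exactly as in Lemma~\ref{lem:transition-measure}, using $\Gamma(\phi)\le\phi$ together with the truncated $SBV^2$ compactness. The arithmetic--geometric mean step and the final vanishing argument are routine.
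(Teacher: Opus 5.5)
There is a genuine gap in the low-phase estimate of your second step. With $\Gamma(s)=s-s^2/2$ you only have $\Gamma(s)\le s$. This gives $|\Gamma(\phi_\eps)\Psi:\nabla T_M(Q_\eps)|\le\|\Psi\|_\infty\,\phi_\eps|\nabla Q_\eps|$, and Cauchy--Schwarz then yields only
\[
\int_{\Omega\setminus\Omega_\eps^+}\phi_\eps|\nabla Q_\eps|\,dx\le\|\phi_\eps\nabla Q_\eps\|_{L^2}\,|\Omega\setminus\Omega_\eps^+|^{1/2}.
\]
This does not tend to zero, because $|\Omega\setminus\Omega_\eps^+|\to|\Omega\setminus A|>0$. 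The bound you wrote, with the factor $\bigl(\int_{\Omega\setminus\Omega_\eps^+}\phi_\eps^2\bigr)^{1/2}$, requires $\Gamma(s)\le Cs^2$. The paper's primitive has this property, since $\Phi(s)=s^2/2-s^3/3\le s^2/2$, but your $\Gamma$ vanishes only linearly at $0$.

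Pairing the weight differently does not rescue the estimate. The only unweighted gradient control is $\kappa_\eps\int_\Omega|\nabla Q_\eps|^2\le C$. On $\{\phi_\eps\le t_\eps\}$ the Modica--Mortola term gives only $\int\phi_\eps^2\le C\eps$. The resulting bound is of order $(\eps/\kappa_\eps)^{1/2}$, which blows up precisely in the regime $r=0$ you are treating.

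This is not a technicality. Your Step 1 is correct. The convergence of the $\operatorname{div}\Psi$ term and of the high-phase term (through $\nabla V_\eps^M$) are also correct. Granting these, the integration-by-parts identity forces
\[
\int_{\Omega\setminus\Omega_\eps^+}\Gamma(\phi_\eps)\Psi:\nabla T_M(Q_\eps)\,dx\;\longrightarrow\;-\frac12\int_{\partial A}\bigl(T_M(Q^-)\otimes\nu_A\bigr):\Psi\,d\mathcal H^2 .
\]
So proving that the low-phase contribution vanishes is equivalent to proving $T_M(Q^-)=0$. The step you call routine carries the entire content of the lemma.

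The repair is to replace $\Gamma$ by $\Phi$. Since $\Phi'(s)=s(1-s)\le 1-s$, your Step 1 goes through verbatim; it is exactly the paper's estimate $|Q_\eps D\Phi(\phi_\eps)|(\Omega)\le C(\kappa_\eps/\eps)^{1/2}$. The identification is then Lemma~\ref{lem:transition-measure} applied to $Q_\eps$ itself, so no fixed truncation is needed, and the argument reduces to the paper's proof.

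A minor point: with $\nu_A$ the exterior normal, the distributional limit is $-\frac12T_M(Q^-)\otimes\nu_A\,\mathcal H^2\mres\partial A=\frac12T_M(Q^-)D\chi_A$, not $+\frac12T_M(Q^-)\otimes\nu_A\,\mathcal H^2\mres\partial A$. This sign error does not affect the conclusion.
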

\begin{proof}
We have
\begin{align*}
    |Q_\varepsilon D\Phi(\phi_\eps)|(\Omega)&=\int_\Omega|Q_\varepsilon|\phi_\varepsilon(1-\phi_\varepsilon)|\nabla\phi_\varepsilon|\,dx \\
    &\le\left(\int_\Omega\kappa_\varepsilon^{-1}(1-\phi_\varepsilon)^2|Q_\varepsilon|^2\,dx\right)^{1/2}\left(\int_\Omega\kappa_\varepsilon\phi_\varepsilon^2|\nabla\phi_\varepsilon|^2\,dx\right)^{1/2}.
\end{align*}
The first factor is bounded by the void energy, while the second is at most \(C(\kappa_\varepsilon/\varepsilon)^{1/2}\). Thus \(Q_\varepsilon D\Phi(\phi_\varepsilon)\to0\) in total variation. Combining this with \eqref{eq:tensor-transition-product} gives $c_0Q^-D\chi_A=0$, and hence $Q^-=0$ $\mathcal H^2$-a.e. on $\partial A$.
\end{proof}

\subsection{Coupled transition in the critical regime}\label{subsec:coupled-transition}
Assume that
\[
r_\varepsilon:=\frac{\kappa_\varepsilon}{\varepsilon}
\to\ell\in(0,\infty).
\]
Since the phase transition and exterior relaxation occur on comparable scales, their contributions must be estimated jointly. We therefore introduce a $Q_\varepsilon$-dependent Modica primitive.

Define
\begin{equation}\label{eq:critical-effective-potential}
    V_{\varepsilon,q}(s):=w_pW(s)+\frac{w_v}{2r_\varepsilon}|q|^2(1-s)^2,\qquad V_{\ell,q}(s):=w_pW(s)+\frac{w_v}{2\ell}|q|^2(1-s)^2.
\end{equation}
For fixed \(\eta\in(0,1)\), set
\begin{equation}\label{eq:critical-weighted-primitive}
    H_{\varepsilon,\eta}(q,s):=\int_\eta^{\max\{s,\eta\}}\sqrt{V_{\varepsilon,q}(t)}\,dt,\qquad c_{\ell,\eta}(q):=\int_\eta^1\sqrt{V_{\ell,q}(s)}\,ds.
\end{equation}
We also define
\begin{equation}\label{eq:critical-truncated-density}
    m_{\ell}^{\eta}(q,\nu):=2\sqrt{w_p}\, c_{\ell,\eta}(q)G(q,\nu).
\end{equation}

The next lemma is the critical analogue of Lemma~\ref{lem:transition-measure}. The main additional difficulty is that the phase-field primitive now depends on $Q_\eps$. Differentiating it therefore produces a term involving $\nabla Q_\eps$. This is the second, and more delicate, use of the truncated-\(SBV^2\) framework. Since the Modica-type primitive depends on \(Q_\varepsilon\), its derivative contains an additional \(\nabla Q_\varepsilon\)-term. Proposition \ref{prop:truncated-sbv-compactness} allows us to identify this term distributionally without imposing any additional local \(H^1\) compactness or uniform \(L^\infty\) bound on \(Q_\varepsilon\).

\begin{lemma}[Weighted transition measure in the critical regime]
\label{lem:critical-weighted-transition}
Assume that $r_\varepsilon\to\ell\in(0,\infty)$, $(Q_\varepsilon,\phi_\varepsilon)\to(Q,A)$ with $(Q,A)\in\mathcal A_0^{\mathrm{reg}}$. Fix $\eta,\theta\in(0,1)$, and set
\[
    h_\varepsilon:=H_{\varepsilon,\eta}(Q_\varepsilon,\phi_\varepsilon),\qquad g_\eta:=c_{\ell,\eta}(Q)\chi_A.
\]
For every open set $U\subset\Omega$, define
\begin{equation*}
\begin{aligned}
    \widetilde{\mathcal{T}}_{\varepsilon,\eta,\theta}(U):=&\int_{U\cap\{\phi_\varepsilon>\eta\}}\Bigg(\theta\phi_\varepsilon^2|\nabla Q_\varepsilon|^2+\frac{\phi_\varepsilon^2}{2\lambda}F_b(Q_\varepsilon)\\
    &\quad+\frac{w_p}{\varepsilon}W(\phi_\varepsilon)+w_p\varepsilon F(Q_\varepsilon,\nabla\phi_\varepsilon)^2+\frac{w_v}{2\kappa_\varepsilon}(1-\phi_\varepsilon)^2|Q_\varepsilon|^2\Bigg)\,dx.
\end{aligned}
\end{equation*}

Then
\begin{equation}\label{eq:critical-primitive-L1}
    h_\varepsilon\to g_\eta \qquad\text{strongly in }L^1(\Omega).
\end{equation}

Moreover, the $\mathbb R^3\times\mathbb R^3$-valued Radon measures $\Lambda_{\varepsilon,\eta}:=\left(Dh_\varepsilon,\sqrt{\omega}\, A(Q_\varepsilon)Dh_\varepsilon\right)$ are uniformly bounded in total variation and satisfy
\begin{equation}\label{eq:critical-measure-convergence}
    \Lambda_{\varepsilon,\eta}\stackrel{*}{\rightharpoonup}\Lambda_\eta:=\left(Dg_\eta,\sqrt{\omega}\,A(Q)Dg_\eta\right)
\end{equation}
in $\mathcal M\bigl(\Omega;\mathbb R^3\times\mathbb R^3\bigr).$

The limiting measures admit the decompositions
\begin{align}
    Dg_\eta&=\chi_A\nabla(c_{\ell,\eta}(Q))\,dx-c_{\ell,\eta}(Q^-)\nu_A\,\mathcal H^2\mres\partial A,\label{eq:critical-Dg-decomposition}\\
    A(Q)Dg_\eta&=\chi_AA(Q)\nabla(c_{\ell,\eta}(Q))\,dx-c_{\ell,\eta}(Q^-)A(Q^-)\nu_A\,\mathcal H^2\mres\partial A.\label{eq:critical-ADg-decomposition}
\end{align}

Finally, for every open set $U\subset\Omega$,
\begin{equation}\label{eq:critical-energy-measure-bound}
    \widetilde{\mathcal{T}}_{\varepsilon,\eta,\theta}(U)\geq2\sqrt{w_p}\,|\Lambda_{\varepsilon,\eta}|(U)-C_{\eta,\theta}|U|,
\end{equation}
and consequently
\begin{equation}\label{eq:critical-open-set-liminf}
    \liminf_{\varepsilon\to0}\widetilde{\mathcal{T}}_{\varepsilon,\eta,\theta}(U)\geq2\sqrt{w_p}\,|\Lambda_\eta|(U)-C_{\eta,\theta}|U|.
\end{equation}
\end{lemma}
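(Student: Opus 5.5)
The plan follows the scheme of Lemma~\ref{lem:transition-measure}, with the $Q$-dependence of the primitive as the new ingredient. I would establish four things in order: the $L^1$ convergence \eqref{eq:critical-primitive-L1}, the pointwise energy inequality \eqref{eq:critical-energy-measure-bound}, distributional convergence of $\Lambda_{\varepsilon,\eta}$ (upgraded to weak-$*$ by the mass bound), and finally the decompositions and \eqref{eq:critical-open-set-liminf}.

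\textbf{Step 1: convergence of $h_\varepsilon$.} By Proposition~\ref{prop:truncated-sbv-compactness} we have $Q_\varepsilon\to Q$ in $L^1$, and $\phi_\varepsilon\to\chi_A$; after a subsequence both hold a.e. Since $r_\varepsilon\to\ell>0$, $V_{\varepsilon,q}\to V_{\ell,q}$ locally uniformly in $(q,s)$. Hence $h_\varepsilon\to c_{\ell,\eta}(Q)\chi_A$ a.e.; outside $A$ we use $\phi_\varepsilon\to0<\eta$, so $h_\varepsilon\to0$. For domination, $0\le h_\varepsilon\le C(1+r_\varepsilon^{-1/2}|Q_\varepsilon|)$. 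The family $|Q_\varepsilon|$ is equi-integrable, being strongly $L^1$ convergent. Vitali's theorem then gives \eqref{eq:critical-primitive-L1}.

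\textbf{Step 2: the chain rule and \eqref{eq:critical-energy-measure-bound}.} On $\{\phi_\varepsilon>\eta\}$,
\[
\nabla h_\varepsilon=\sqrt{V_{\varepsilon,Q_\varepsilon}(\phi_\varepsilon)}\,\nabla\phi_\varepsilon+R_\varepsilon,
\qquad
R_\varepsilon=\int_\eta^{\phi_\varepsilon}\frac{w_v(1-t)^2}{2r_\varepsilon\sqrt{V_{\varepsilon,Q_\varepsilon}(t)}}\,dt\;Q_\varepsilon:\nabla Q_\varepsilon ,
\]
and $h_\varepsilon$ is constant (zero) elsewhere. The prefactor of $Q_\varepsilon:\nabla Q_\varepsilon$ is bounded by $C_\eta/(1+|Q_\varepsilon|)$, because $V\ge c|q|^2(1-t)^2$ and $V\ge w_pW(t)\ge c_\eta>0$ for $t\in[\eta,1)$ away from $t=1$. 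This gives $|R_\varepsilon|\le C_\eta|\nabla Q_\varepsilon|$.

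For the main term, rewrite $\kappa_\varepsilon^{-1}=(\varepsilon r_\varepsilon)^{-1}$, so that the potential part equals $\varepsilon^{-1}V_{\varepsilon,Q_\varepsilon}(\phi_\varepsilon)$. Young's inequality then gives
\[
\varepsilon^{-1}V+w_p\varepsilon F^2\ge 2\sqrt{w_p}\sqrt{V}\,F(Q_\varepsilon,\nabla\phi_\varepsilon)=2\sqrt{w_p}\,F\bigl(Q_\varepsilon,\sqrt V\nabla\phi_\varepsilon\bigr).
\]
It remains to absorb $2\sqrt{w_p}F(Q_\varepsilon,R_\varepsilon)\le C(1+|Q_\varepsilon|)|R_\varepsilon|$. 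Using the improved bound above, this is at most $C_\eta|\nabla Q_\varepsilon|\le \theta\eta^2|\nabla Q_\varepsilon|^2+C_{\eta,\theta}$. Since $\phi_\varepsilon>\eta$, the quadratic term is controlled by $\theta\phi_\varepsilon^2|\nabla Q_\varepsilon|^2$. Integrating gives \eqref{eq:critical-energy-measure-bound}; taking $U=\Omega$ yields the uniform mass bound.

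\textbf{Step 3: identifying the limit.} This is the main obstacle. Convergence $Dh_\varepsilon\to Dg_\eta$ in $\mathcal D'$ is immediate from Step 1. The difficulty is the product $A(Q_\varepsilon)Dh_\varepsilon$, i.e.\ $Q_\varepsilon Dh_\varepsilon$. I would integrate by parts, writing $Q_\varepsilon\nabla h_\varepsilon=\nabla(h_\varepsilon Q_\varepsilon)-h_\varepsilon\nabla Q_\varepsilon$.

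For the first term, $h_\varepsilon Q_\varepsilon\to g_\eta Q$ in $L^1$ by Vitali, using the bound $|h_\varepsilon Q_\varepsilon|\le C(|Q_\varepsilon|+|Q_\varepsilon|^2)$ and equi-integrability of $|Q_\varepsilon|^2$. The latter comes from the $L^4$ bound on the high-phase set together with the $O(\kappa_\varepsilon)$ bound on the low-phase set.

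For the second term, note that $h_\varepsilon$ is supported in $\{\phi_\varepsilon>\eta\}$. I would replicate the argument behind \eqref{eq:weighted-gradient-distribution}: replace $\nabla Q_\varepsilon$ by $\nabla V_\varepsilon^M$ (with levels $a<\eta$), and control the truncation error via $\{|Q_\varepsilon|\ge M\}$ and the $L^4$ bound. Then $h_\varepsilon\to g_\eta$ strongly in $L^2$ paired with the weak $L^2$ convergence of $\nabla V_\varepsilon^M$ gives $h_\varepsilon\nabla Q_\varepsilon\to g_\eta\chi_A\nabla Q$ in $\mathcal D'$. The $L^2$ convergence needs care: $h_\varepsilon$ is unbounded, so I would cut off with $T_M$ inside $c_{\ell,\eta}$ or use $|h_\varepsilon|\le C(1+|Q_\varepsilon|)$ together with $L^4$ control on $\{\phi_\varepsilon>\eta\}$.

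Combining the two terms, $Q_\varepsilon Dh_\varepsilon\to D(g_\eta Q)-g_\eta\chi_A\nabla Q$, which equals $Q\,Dg_\eta$ interpreted with the trace $Q^-$ on $\partial A$. The mass bound upgrades this to the weak-$*$ convergence \eqref{eq:critical-measure-convergence}.

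\textbf{Step 4: decompositions and conclusion.} The decompositions \eqref{eq:critical-Dg-decomposition}--\eqref{eq:critical-ADg-decomposition} follow from the Sobolev chain rule for $c_{\ell,\eta}(Q)\in W^{1,1}(A)$, since $c_{\ell,\eta}$ is locally Lipschitz with linear growth, together with the Gauss–Green formula on the $C^2$ set $A$, using traces. Finally, \eqref{eq:critical-open-set-liminf} follows from \eqref{eq:critical-energy-measure-bound} by lower semicontinuity of total variation on open sets under weak-$*$ convergence.
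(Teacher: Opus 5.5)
Your outline follows the paper's proof: the chain rule, Young's inequality against $\varepsilon^{-1}V_{\varepsilon,Q_\varepsilon}(\phi_\varepsilon)+w_p\varepsilon F^2$, the mass bound, and integration by parts with $h_\varepsilon\nabla Q_\varepsilon$ identified through $\nabla V_\varepsilon^M$ and the strong $L^2$ convergence of $h_\varepsilon$ (from $L^1$ convergence plus the $L^4$ bound on $\{\phi_\varepsilon>\eta\}$). Your Step 1, via a.e.\ convergence and Vitali, is a valid substitute for the paper's Lipschitz estimates \eqref{eq:critical-primitive-q-Lipschitz}--\eqref{eq:critical-primitive-scale-error}.

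\textbf{The gap is in Step 2.} Your prefactor bound $C_\eta/(1+|Q_\varepsilon|)$ is correct. However, it is entirely used up cancelling the factor $|Q_\varepsilon|$ in $Q_\varepsilon:\nabla Q_\varepsilon$. It gives $|R_\varepsilon|\le C_\eta|\nabla Q_\varepsilon|$ and nothing better, since for large $|q|$ the prefactor is of exact order $1/|q|$. The factor $(1+|Q_\varepsilon|)$ coming from \eqref{eq:anisotropic-norm-growth} is therefore left uncompensated.

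So your claimed inequality $C(1+|Q_\varepsilon|)|R_\varepsilon|\le C_\eta|\nabla Q_\varepsilon|$ is false. For example, suppose $Q_\varepsilon$ varies only in modulus along a fixed direction of $\mathcal S_0$, and its gradient points along an eigenvector of that direction with nonzero eigenvalue. Then $F(Q_\varepsilon,R_\varepsilon)$ is of order $|Q_\varepsilon|\,|\nabla Q_\varepsilon|$. The correct estimate is
\[
2\sqrt{w_p}\,F(Q_\varepsilon,R_\varepsilon)\le C_\eta\bigl(1+|Q_\varepsilon|\bigr)|\nabla Q_\varepsilon|\le\theta\eta^2|\nabla Q_\varepsilon|^2+C_{\eta,\theta}\bigl(1+|Q_\varepsilon|^2\bigr).
\]
Its last term is not a constant, so integrating it does not produce the error $C_{\eta,\theta}|U|$ in \eqref{eq:critical-energy-measure-bound}.

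\textbf{The fix.} You never use the bulk-potential term $\frac{\phi_\varepsilon^2}{2\lambda}F_b(Q_\varepsilon)$ in $\widetilde{\mathcal T}_{\varepsilon,\eta,\theta}$, and it is there precisely for this purpose. On $\{\phi_\varepsilon>\eta\}$, the coercivity \eqref{eq:bulk-potential-coercivity} gives
\[
\frac{\phi_\varepsilon^2}{2\lambda}F_b(Q_\varepsilon)\ge\frac{\eta^2}{2\lambda}\bigl(c_b|Q_\varepsilon|^4-C_b\bigr).
\]
This absorbs $C_{\eta,\theta}|Q_\varepsilon|^2$ up to an additive constant, which is exactly the paper's \eqref{eq:critical-chain-error-absorption}. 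With this correction the rest of your argument goes through.

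Two smaller points:
\begin{itemize}
\item You should justify the chain rule for $H_{\varepsilon,\eta}(Q_\varepsilon,\phi_\varepsilon)$ when $Q_\varepsilon$ is unbounded. The paper applies it to $H_{\varepsilon,\eta}(T_R(Q_\varepsilon),\phi_\varepsilon)$ and lets $R\to\infty$ in $W^{1,1}$.
\item The superlevel window must satisfy $0<a<b<\eta$, not merely $a<\eta$. This ensures $\{\phi_\varepsilon>\eta\}\subset\Omega_\varepsilon^+$, so that $h_\varepsilon\nabla T_M(Q_\varepsilon)=h_\varepsilon\nabla V_\varepsilon^M$.
\end{itemize}
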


\begin{proof}
Fix an arbitrary subsequence and choose $0<a<b<\eta$. By Proposition~\ref{prop:truncated-sbv-compactness} and Remark~\ref{rem:SBV-convergence-notation}, after passing to a further subsequence, not relabeled,
\eqref{eq:strong-l1-Q} holds and
\[
    V_\varepsilon^M\xrightarrow{\mathrm{SBV}}T_M(Q)\qquad\text{for every }M\in\mathbb N.
\]
Since $t_\varepsilon\in(a,b)$ and $b<\eta$, then $S_{\varepsilon,\eta} =\{\phi_\varepsilon>\eta\}\subset\{\phi_\varepsilon>t_\varepsilon\}=\Omega_\varepsilon^+.$ We prove the assertions along this subsequence.

We divide the proof into five steps.

\medskip
\noindent\emph{Step 1: Estimates for the weighted primitive.}
Since $r_\varepsilon\to\ell\in(0,\infty)$, the coefficients $r_\varepsilon^{-1}$ are uniformly bounded above and below. From \eqref{eq:critical-effective-potential}--\eqref{eq:critical-weighted-primitive}, one obtains
\begin{align}
    0\leq H_{\varepsilon,\eta}(q,s)&\leq C\mathbf 1_{\{s>\eta\}}(1+|q|),\label{eq:critical-primitive-growth}\\
    |H_{\varepsilon,\eta}(q,s)-H_{\varepsilon,\eta}(\widetilde q,s)|&\leq C|q-\widetilde q|,\label{eq:critical-primitive-q-Lipschitz}\\
    |H_{\varepsilon,\eta}(q,s)-H_{\varepsilon,\eta}(q,\widetilde s)|&\leq C(1+|q|)|s-\widetilde s|,\label{eq:critical-primitive-s-Lipschitz}\\
    |H_{\varepsilon,\eta}(q,s)-H_{\ell,\eta}(q,s)|&\leq C\left|\frac1{r_\varepsilon}-\frac1\ell\right|^{1/2}|q|.\label{eq:critical-primitive-scale-error}
\end{align}
To justify the chain rule used below, we record more explicitly the regularity of the weighted primitive. Set $c_\varepsilon:=\frac{w_v}{2r_\varepsilon}$. Since $r_\varepsilon\to\ell\in(0,+\infty)$, after discarding finitely many indices there exist constants $0<c_*<c^*<+\infty$ such that $c_*\leq c_\eps\leq c^*$. For $t\in[\eta,1]$, the identity $W(t)=t^2(1-t)^2$ gives
\[
    V_{\varepsilon,q}(t)
    =(1-t)^2\bigl(w_p t^2+c_\varepsilon|q|^2\bigr),\qquad \sqrt{V_{\varepsilon,q}(t)}=(1-t)\sqrt{w_p t^2+c_\varepsilon|q|^2}.
\]
In particular, since $t\ge\eta>0$, the integrand is continuously differentiable with respect to $q$, including at $q=0$.

For $s>\eta$, we have
\[
    D_sH_{\varepsilon,\eta}(q,s)=\sqrt{V_{\varepsilon,q}(s)},\qquad D_qH_{\varepsilon,\eta}(q,s)=\int_\eta^s\frac{c_\varepsilon(1-t)q}{\sqrt{w_p t^2+c_\varepsilon|q|^2}}\,dt.
\]
For $s<\eta$, both derivatives vanish. Thus $H_{\varepsilon,\eta}:\mathcal S_0\times[0,1]\longrightarrow[0,+\infty)$ is locally Lipschitz, although it need not be differentiable with respect to $s$ at $s=\eta$. The value assigned to $D_sH_{\varepsilon,\eta}$ at $s=\eta$ is immaterial for the Sobolev chain rule, since the Stampacchia property gives $\nabla \phi_\eps=0$ a.e. on $\{\phi_\varepsilon=\eta\}$.

The preceding formulas also imply
\begin{align*}
    |D_qH_{\varepsilon,\eta}(q,s)|&\le\sqrt{c^*}\int_\eta^s(1-t)\,dt\le C(s-\eta)_+\le Cs\,\mathbf 1_{\{s>\eta\}},
\end{align*}
and
\[
    |D_sH_{\varepsilon,\eta}(q,s)|\le C(1+|q|)\mathbf 1_{\{s>\eta\}}.
\]
These estimates imply \eqref{eq:critical-primitive-growth}--\eqref{eq:critical-primitive-s-Lipschitz}. Estimate \eqref{eq:critical-primitive-scale-error} follows from $|\sqrt{a}-\sqrt{b}|\le \sqrt{|a-b|}$ and the convergence $r_\varepsilon^{-1}\to\ell^{-1}$. We shall also use the corresponding Lipschitz estimate for the limiting primitive. Since
\[
    \left|D_q\left( (1-s)\sqrt{w_ps^2+\frac{w_v}{2\ell}|q|^2}\right)\right|\le C_\ell(1-s),
\]
the mean-value theorem gives
\begin{equation}
    \left|c_{\ell,\eta}(q)-c_{\ell,\eta}(\widetilde q)\right|\le C_{\ell,\eta}|q-\widetilde q|.
    \label{eq:critical-c-Lipschitz}
\end{equation}

\medskip
\noindent
\textit{Step 2: Chain rule.}
For $R>0$, define $h_\varepsilon^R:=H_{\varepsilon,\eta}\bigl(T_R(Q_\varepsilon),\phi_\varepsilon\bigr)$. Since $T_R(Q_\varepsilon)$ is bounded and $H_{\varepsilon,\eta}$ is Lipschitz on bounded subsets of $\mathcal S_0\times[0,1]$, the standard Sobolev chain rule yields
\begin{align*}
    \nabla h_\varepsilon^R=\mathbf 1_{\{\phi_\varepsilon>\eta\}}\sqrt{V_{\varepsilon,T_R(Q_\varepsilon)}(\phi_\varepsilon)}\,\nabla\phi_\varepsilon+D_qH_{\varepsilon,\eta}\bigl(T_R(Q_\varepsilon),\phi_\varepsilon\bigr):\nabla T_R(Q_\varepsilon)
\end{align*}
almost everywhere in $\Omega$.

We next let $R\to+\infty$. Pointwise almost everywhere,
\[
    T_R(Q_\varepsilon)\to Q_\varepsilon,\qquad \nabla T_R(Q_\varepsilon)\to\nabla Q_\varepsilon.
\]
Moreover,
\begin{align*}
    \left|D_qH_{\varepsilon,\eta}\bigl(T_R(Q_\varepsilon),\phi_\varepsilon\bigr):\nabla T_R(Q_\varepsilon)\right| \le C\phi_\varepsilon|\nabla Q_\varepsilon|,\\ \left|D_sH_{\varepsilon,\eta}\bigl(T_R(Q_\varepsilon),\phi_\varepsilon\bigr)\nabla\phi_\varepsilon\right|\le C(1+|Q_\varepsilon|)|\nabla\phi_\varepsilon|.
\end{align*}
For every fixed $\varepsilon$, both right-hand sides belong to $L^1(\Omega)$. Dominated convergence therefore gives $h_\varepsilon^R\to h_\varepsilon$ strongly in $W^{1,1}(\Omega)$. Consequently,
\begin{equation}\label{eq:critical-chain-rule}
    \nabla h_\varepsilon=\mathbf 1_{\{\phi_\varepsilon>\eta\}}\left(\sqrt{V_{\varepsilon,Q_\varepsilon}(\phi_\varepsilon)}\,\nabla\phi_\varepsilon+B_\varepsilon\right),\quad (B_\varepsilon)_j:=D_qH_{\varepsilon,\eta}(Q_\varepsilon,\phi_\varepsilon):\partial_jQ_\varepsilon.
\end{equation}
The preceding estimate for $D_qH_{\varepsilon,\eta}$ gives
\begin{equation}\label{eq:critical-chain-error}
    |B_\varepsilon|\le C\phi_\varepsilon|\nabla Q_\varepsilon|\qquad\text{a.e. on }\{\phi_\varepsilon>\eta\}.
\end{equation}

Taking $s=1$ in the analogue of \eqref{eq:critical-primitive-q-Lipschitz} gives \eqref{eq:critical-c-Lipschitz}.

\medskip
\noindent\emph{Step 3: Strong convergence of $h_\varepsilon$.}
By \eqref{eq:critical-primitive-q-Lipschitz}--\eqref{eq:critical-primitive-scale-error},\eqref{eq:strong-l1-Q}, and \eqref{eq:phase-field-L1},
\begin{align*}
    \|h_\varepsilon-g_\eta\|_{L^1}\leq&C\|Q_\varepsilon-Q\|_{L^1}+C\left|\frac1{r_\varepsilon}-\frac1\ell\right|^{1/2}\|Q\|_{L^1}+C\int_\Omega(1+|Q|)|\phi_\varepsilon-\chi_A|\,dx
\end{align*}
converges to zero. This proves \eqref{eq:critical-primitive-L1} along the selected subsequence. For completeness,
set $d_\varepsilon:=|\phi_\varepsilon-\chi_A|$. Since $0\le d_\varepsilon\le1$, we have
\[
    \|d_\varepsilon\|_{L^2(\Omega)}^2\le\|d_\varepsilon\|_{L^1(\Omega)}\to0.
\]
Consequently,
\begin{align*}
    \int_\Omega(1+|Q|)d_\varepsilon\,dx\le\|d_\varepsilon\|_{L^1(\Omega)}+\|Q\|_{L^2(\Omega)}\|d_\varepsilon\|_{L^2(\Omega)}\to0,
\end{align*}
which justifies the last term in the estimate leading to \eqref{eq:critical-primitive-L1}.

We shall also need strong $L^2$ convergence. Set $S_{\varepsilon,\eta}:=\{\phi_\varepsilon>\eta\}$. By the energy bound, \eqref{eq:weighted-tensor-L4}, and $\phi_\varepsilon\ge\eta$ on $S_{\varepsilon,\eta}$, we have
\begin{equation}\label{eq:critical-high-phase-bound}
    \sup_{\varepsilon>0}\int_{S_{\varepsilon,\eta}}\left(|\nabla Q_\varepsilon|^2+|Q_\varepsilon|^4\right)\,dx<+\infty.
\end{equation}
By \eqref{eq:critical-primitive-growth} and \eqref{eq:critical-high-phase-bound}, the sequence $\{h_\varepsilon\}_\varepsilon$ is bounded in $L^4(\Omega)$. Since $c_{\ell,\eta}$ has at most linear growth and $Q\in H^1(A)\hookrightarrow L^6(A)$, we also have $g_\eta\in L^4(\Omega)$.

By interpolation between $L^1$ and $L^4$,
\begin{align*}
    \|h_\varepsilon-g_\eta\|_{L^2(\Omega)}\le\|h_\varepsilon-g_\eta\|_{L^1(\Omega)}^{1/3}\|h_\varepsilon-g_\eta\|_{L^4(\Omega)}^{2/3}\to 0.
\end{align*}
Therefore,
\begin{equation}\label{eq:critical-primitive-L2}
    h_\varepsilon\to g_\eta\qquad\text{strongly in }L^2(\Omega).
\end{equation}

\medskip
\noindent\emph{Step 4: Energy estimate and BV compactness.}
By \eqref{eq:critical-chain-error}, \eqref{eq:anisotropic-norm-growth}, and \eqref{eq:bulk-potential-coercivity}, on $S_{\varepsilon,\eta}$, we have
\begin{equation}\label{eq:critical-chain-error-absorption}
    2\sqrt{w_p}\, F(Q_\varepsilon,B_\varepsilon)\leq\theta\phi_\varepsilon^2|\nabla Q_\varepsilon|^2+\frac{\phi_\varepsilon^2}{2\lambda}F_b(Q_\varepsilon)+C_{\eta,\theta}.
\end{equation}

The arithmetic--geometric mean inequality, \eqref{eq:critical-chain-rule}, the triangle inequality for $F$, and \eqref{eq:critical-chain-error-absorption} give
\begin{align*}
    &\theta\phi_\varepsilon^2|\nabla Q_\varepsilon|^2+\frac{\phi_\varepsilon^2}{2\lambda}F_b(Q_\varepsilon)+\varepsilon^{-1}V_{\varepsilon,Q_\varepsilon}(\phi_\varepsilon)+w_p\varepsilon F(Q_\varepsilon,\nabla\phi_\varepsilon)^2\\
    &\quad\geq2\sqrt{w_p}\,F(Q_\varepsilon,\nabla h_\varepsilon)-C_{\eta,\theta}
\end{align*}
a.e. on $S_{\varepsilon,\eta}$.

We regard $\Lambda_{\varepsilon,\eta}$ as an $\mathbb R^3\times\mathbb R^3$-valued Radon measure. Since $h_\varepsilon\in W^{1,1}(\Omega)$, this measure is initially absolutely continuous, and, for every open set $U\subset\Omega$,
\begin{align*}
    |\Lambda_{\varepsilon,\eta}|(U)&=\int_U\sqrt{|\nabla h_\varepsilon|^2+\omega|A(Q_\varepsilon)\nabla h_\varepsilon|^2}\,dx=\int_UF(Q_\varepsilon,\nabla h_\varepsilon)\,dx.
\end{align*}
Integration therefore gives \eqref{eq:critical-energy-measure-bound}.

Taking $U=\Omega$ in \eqref{eq:critical-energy-measure-bound} and using the energy bound, we obtain $\sup_{\varepsilon>0}|\Lambda_{\varepsilon,\eta}|(\Omega)<+\infty$. In particular,
\[
    \sup_{\varepsilon>0}
    |Dh_\varepsilon|(\Omega)
    <+\infty.
\]
Together with \eqref{eq:critical-primitive-L1}, the $BV$ compactness theorem and uniqueness of the distributional derivative give
\begin{equation}\label{eq:critical-Dh-convergence}
    Dh_\varepsilon\stackrel{*}{\rightharpoonup}Dg_\eta\qquad\text{in }\mathcal M(\Omega;\mathbb R^3).
\end{equation}

\medskip
\noindent\emph{Step 5: Identification of the anisotropic measure.}
We first claim that
\begin{equation}\label{eq:critical-weighted-gradient-limit}
    h_\varepsilon\nabla Q_\varepsilon\to g_\eta\nabla Q \quad\text{in }\mathcal D'(\Omega;\mathcal S_0\otimes\mathbb R^3).
\end{equation}

Since $h_\varepsilon=0$ outside $S_{\varepsilon,\eta}\subset\Omega_\varepsilon^+$,
\[
    \int_\Omega h_\varepsilon\Psi:\nabla T_M(Q_\varepsilon)\,dx=\int_\Omega h_\varepsilon\Psi:\nabla V_\varepsilon^M\,dx.
\]

By \eqref{eq:critical-primitive-L2}, $h_\varepsilon\to g_\eta$ strongly in $L^2(\Omega)$, while
\[
    \nabla V_\varepsilon^M\rightharpoonup\chi_A\nabla T_M(Q)\qquad\text{weakly in }L^2(\Omega).
\]
Since \(\{\nabla V_\varepsilon^M\}_\varepsilon\) is bounded in \(L^2\), we obtain
\begin{align*}
    \left|\int_\Omega(h_\varepsilon-g_\eta)\Psi:\nabla V_\varepsilon^M\,dx\right|\leq\|h_\varepsilon-g_\eta\|_{L^2}\|\Psi\|_{L^\infty}\|\nabla V_\varepsilon^M\|_{L^2}\to0.
\end{align*}
The weak \(L^2\)-convergence of \(\nabla V_\varepsilon^M\) then gives
\[
    \int_\Omega h_\varepsilon\Psi:\nabla V_\varepsilon^M\,dx\to\int_Ag_\eta\Psi:\nabla T_M(Q)\,dx.
\]

By \eqref{eq:truncation-gradient-error}, \eqref{eq:critical-primitive-growth}, and \eqref{eq:critical-high-phase-bound},
\[
    \sup_{\varepsilon>0}\left|\int_\Omega h_\varepsilon\Psi:\bigl(\nabla Q_\varepsilon-\nabla T_M(Q_\varepsilon)\bigr)\,dx\right|\leq\frac{C_\eta}{M}\|\Psi\|_{L^\infty}.
\]
Letting first $\varepsilon\to0$ and then $M\to\infty$, using \eqref{eq:truncation-gradient}, proves \eqref{eq:critical-weighted-gradient-limit}.

Furthermore, $h_\varepsilon A(Q_\varepsilon)\to g_\eta A(Q)$ strongly in $L^1(\Omega)$. Indeed, the convergence holds in measure by \eqref{eq:strong-l1-Q} and \eqref{eq:critical-primitive-L1}. Moreover, by \eqref{eq:critical-primitive-growth},
\[
    |h_\varepsilon A(Q_\varepsilon)|^2\le C_\eta(1+|Q_\varepsilon|^4)\mathbf 1_{S_{\varepsilon,\eta}}.
\]
It follows from \eqref{eq:critical-high-phase-bound} that $\{h_\varepsilon A(Q_\varepsilon)\}_\varepsilon$ is bounded in $L^2(\Omega)$ and is therefore uniformly integrable in $L^1(\Omega)$. The asserted strong $L^1$ convergence now follows from Vitali's theorem.

For $\zeta\in C_c^1(\Omega;\mathbb R^3)$, integration by parts gives
\begin{align*}
    \int_\Omega\zeta\cdot A(Q_\varepsilon)\nabla h_\varepsilon\,dx=-\int_\Omega h_\varepsilon A(Q_\varepsilon):\nabla\zeta\,dx-\int_\Omega h_\varepsilon\operatorname{div}Q_\varepsilon\cdot\zeta\,dx,
\end{align*}
where we used the definition of $A$, see \eqref{eq:anchoring-matrix}. The first term passes to the limit by the strong $L^1$ convergence above, while the second passes to the limit by \eqref{eq:critical-weighted-gradient-limit}. Therefore,
\[
    A(Q_\varepsilon)Dh_\varepsilon\to A(Q)Dg_\eta\qquad\text{in }\mathcal D'(\Omega;\mathbb R^3).
\]

We next specify the meaning of the measure on the right-hand side. By \eqref{eq:critical-c-Lipschitz}, the map $c_{\ell,\eta}:\mathcal S_0\to[0,+\infty)$ is globally Lipschitz and has at most linear growth. Since $Q\in H^1(A;\mathcal S_0)$, the Sobolev chain rule gives $c_{\ell,\eta}(Q)\in H^1(A)$. For $g_\eta=c_{\ell,\eta}(Q)\chi_A$, the $BV$ product rule gives
\begin{align*}
    Dg_\eta=\chi_A\nabla(c_{\ell,\eta}(Q))\,dx-c_{\ell,\eta}(Q^-)\nu_A\,\mathcal H^2\!\llcorner\partial A.
\end{align*}
We define
\begin{align*}
    A(Q)Dg_\eta:=\chi_AA(Q)\nabla(c_{\ell,\eta}(Q))\,dx-c_{\ell,\eta}(Q^-)A(Q^-)\nu_A\,\mathcal H^2\!\llcorner\partial A.
\end{align*}
This is a finite Radon measure. Indeed, $A(Q)\in L^6(A)$ and $\nabla(c_{\ell,\eta}(Q))\in L^2(A)$, so that
\[
    A(Q)\nabla(c_{\ell,\eta}(Q))\in L^{3/2}(A)\subset L^1(A).
\]
Moreover,
\[
    c_{\ell,\eta}(q)|A(q)|\le C_\eta(1+|q|^2),
\]
and $Q^-\in L^2(\partial A;\mathcal S_0)$. Hence the boundary density belongs to $L^1(\partial A)$.

The uniform bound on $|\Lambda_{\varepsilon,\eta}|(\Omega)$, together with the preceding distributional identification, now yields
\[
    \Lambda_{\varepsilon,\eta}\stackrel{*}{\rightharpoonup}\left(Dg_\eta,\sqrt{\omega}\,A(Q)Dg_\eta\right)
\]
in $\mathcal M\bigl(\Omega;\mathbb R^3\times\mathbb R^3\bigr)$. This proves \eqref{eq:critical-measure-convergence}, while the preceding formulas give \eqref{eq:critical-Dg-decomposition} and \eqref{eq:critical-ADg-decomposition}.

We have proved the assertions along a further subsequence of an arbitrary initial subsequence. Since all possible subsequential limits are the same, namely $g_\eta$ in \eqref{eq:critical-primitive-L1} and $\Lambda_\eta$ in \eqref{eq:critical-measure-convergence}, the subsequence criterion implies that \eqref{eq:critical-primitive-L1} and \eqref{eq:critical-measure-convergence} hold for the full sequence.
\end{proof}

\begin{proposition}[Critical interfacial lower bound]\label{prop:critical-transition-lower-bound}
Assume that $r_\eps\to\ell\in(0,\infty)$. For every $\eta,\theta\in(0,1)$ and $0<\delta,\rho<\rho_0$,
\begin{align}\label{eq:critical-interface-fixed-eta}
    \liminf_{\eps\to0}\widetilde{\mathcal{T}}_{\eps,\eta,\theta}(N_{\delta,\rho})\geq\int_{\partial A}m_{\ell}^{\eta}(Q^-,\nu_A)\,d\mathcal H^2-C_{\eta,\theta}|N_{\delta,\rho}|,
\end{align}
Consequently,
\[
    \lim_{\eta\downarrow0}\lim_{\delta,\rho\downarrow0}\liminf_{\eps\to0}\widetilde{\mathcal{T}}_{\eps,\eta,\theta}(N_{\delta,\rho})\geq\int_{\partial A}m_\ell(Q^-,\nu_A)\,d\mathcal H^2.
\]
\end{proposition}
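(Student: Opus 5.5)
The plan is to apply \eqref{eq:critical-open-set-liminf} from Lemma~\ref{lem:critical-weighted-transition} with $U=N_{\delta,\rho}$. This reduces the first claim to a lower bound for the total variation of the limiting measure $\Lambda_\eta$ on $N_{\delta,\rho}$.

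Since $N_{\delta,\rho}$ is an open neighbourhood of $\partial A$, the decompositions \eqref{eq:critical-Dg-decomposition}--\eqref{eq:critical-ADg-decomposition} show that $\Lambda_\eta\mres N_{\delta,\rho}$ splits into two mutually singular parts:
\begin{itemize}
\item an absolutely continuous part supported in $U_\delta^{\rm int}$;
\item the singular part $-c_{\ell,\eta}(Q^-)\bigl(\nu_A,\sqrt{\omega}A(Q^-)\nu_A\bigr)\mathcal H^2\mres\partial A$.
\end{itemize}
Dropping the nonnegative absolutely continuous contribution, we get
\[
|\Lambda_\eta|(N_{\delta,\rho})\ge\int_{\partial A}c_{\ell,\eta}(Q^-)\bigl(|\nu_A|^2+\omega|A(Q^-)\nu_A|^2\bigr)^{1/2}d\mathcal H^2=\int_{\partial A}c_{\ell,\eta}(Q^-)G(Q^-,\nu_A)\,d\mathcal H^2 .
\]
Multiplying by $2\sqrt{w_p}$ gives exactly $\int_{\partial A}m_\ell^\eta(Q^-,\nu_A)\,d\mathcal H^2$ by \eqref{eq:critical-truncated-density}. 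Together with \eqref{eq:critical-open-set-liminf}, this yields \eqref{eq:critical-interface-fixed-eta}, with the same error term $C_{\eta,\theta}|N_{\delta,\rho}|$.

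For the consequence, first let $\delta,\rho\downarrow0$ with $\eta,\theta$ fixed. Then $|N_{\delta,\rho}|\le|U_{\max(\delta,\rho)}|\le C(\delta+\rho)\mathcal H^2(\partial A)\to0$ by \eqref{eq:tubular-change-of-variables}. The error term therefore vanishes, and the boundary integral does not depend on $\delta,\rho$. It remains to let $\eta\downarrow0$:
\[
c_{\ell,\eta}(q)=\int_\eta^1\sqrt{V_{\ell,q}(s)}\,ds\uparrow\int_0^1\sqrt{V_{\ell,q}(s)}\,ds ,
\]
monotonically in $\eta$. Hence $m_\ell^\eta(Q^-,\nu_A)\uparrow m_\ell(Q^-,\nu_A)$ pointwise on $\partial A$, and the monotone convergence theorem gives the claimed limit. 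The limit in $\eta$ exists because the lower bounds are monotone, or one may read the statement with $\liminf$.

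The main point requiring care is the mutual singularity step. One has to check that the total variation of the $\mathbb R^3\times\mathbb R^3$-valued measure, taken with the Euclidean norm on the product, is additive over the $\mathcal L^3$-part and the $\mathcal H^2\mres\partial A$-part. It is, since $\mathcal H^2(\partial A)<\infty$ implies $\mathcal L^3(\partial A)=0$. One also needs the density of the singular part to be integrable, which is guaranteed by the bound $c_{\ell,\eta}(q)|A(q)|\le C_\eta(1+|q|^2)$ noted in the proof of Lemma~\ref{lem:critical-weighted-transition}.

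Finally, $G(q,\nu)=F(q,\nu)$ coincides exactly with the Euclidean norm of $(\nu,\sqrt\omega A(q)\nu)$. This is what makes the limiting density come out as $2\sqrt{w_p}\,c_{\ell,\eta}G$ and not something smaller.
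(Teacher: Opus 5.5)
Your proposal is correct and follows the paper's proof essentially step for step. You apply \eqref{eq:critical-open-set-liminf} with $U=N_{\delta,\rho}$, drop the absolutely continuous part of $\Lambda_\eta$, identify $|\Lambda_\eta|\mres\partial A=c_{\ell,\eta}(Q^-)G(Q^-,\nu_A)\,\mathcal H^2\mres\partial A$, and conclude with $|N_{\delta,\rho}|\to0$ and monotone convergence $m_\ell^\eta\uparrow m_\ell$.

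One small correction to your aside about the limits. They exist because $\widetilde{\mathcal T}_{\eps,\eta,\theta}(N_{\delta,\rho})$ is itself monotone, not because its lower bounds are monotone: its integrand is nonnegative and independent of $\eta$, and the domain $N_{\delta,\rho}\cap\{\phi_\eps>\eta\}$ grows with $\delta,\rho$ and as $\eta\downarrow0$.
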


\begin{proof}
By Lemma~\ref{lem:critical-weighted-transition}, from \eqref{eq:critical-open-set-liminf} with $U=N_{\delta,\rho}$,
\[
    \liminf_{\varepsilon\to0}\widetilde{\mathcal{T}}_{\varepsilon,\eta,\theta}(N_{\delta,\rho})\geq2\sqrt{w_p}\,|\Lambda_\eta|(N_{\delta,\rho})-C_{\eta,\theta}|N_{\delta,\rho}|.
\]

From \eqref{eq:critical-Dg-decomposition} and \eqref{eq:critical-ADg-decomposition},
\[
    |\Lambda_\eta|\mres\partial A=c_{\ell,\eta}(Q^-)G(Q^-,\nu_A)\,\mathcal H^2\mres\partial A.
\]
Since $\partial A\subset N_{\delta,\rho}$,
\[
    2\sqrt{w_p}\,|\Lambda_\eta|(N_{\delta,\rho})\geq\int_{\partial A}m_\ell^\eta(Q^-,\nu_A)\,d\mathcal H^2
\]
by \eqref{eq:critical-truncated-density}. This proves the first assertion.

Finally, $|N_{\delta,\rho}|\to0$ as $\delta,\rho\downarrow0$, while $m_\ell^\eta(Q^-,\nu_A)\uparrow m_\ell(Q^-,\nu_A)$ as $\eta\downarrow 0$. The second assertion follows by monotone convergence.
\end{proof}

\subsection{Proof of the lower-bound inequality}
We conclude by combining the localized estimates obtained in the
preceding sections.

\begin{proposition}[Lower bound]
\label{prop:lower-bound}
Let $r_\eps:=\kappa_\eps/\eps\to r\in[0,\infty]$, and let $(Q_\eps,\phi_\eps)\in\mathcal A_\eps$ be an energy-bounded sequence such that $(Q_\eps,\phi_\eps)\to(Q,A)$ with $(Q,A)\in\mathcal A_0^{\rm reg}$. Then
\[
    \liminf_{\eps\to0}E_{\eps,\kappa_\eps}(Q_\eps,\phi_\eps)\ge E_0^r(Q,A).
\]
\end{proposition}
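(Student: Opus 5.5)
We may assume the liminf is finite and pass to a subsequence realizing it. The proof splits the energy into disjoint spatial pieces, so that each localized estimate from the preceding subsections applies to a separate part of the energy without double counting. Fix $0<\delta,\rho<\rho_0$ and $\eta\in(0,1/2)$. We write $\Omega = A_\delta' \cup N_{\delta,\rho}\cup(\text{rest})$, where $A_\delta'\Subset A$ is the interior region $\{d_A<-\delta\}$. On $A_\delta'$ we use the localized bulk bound of Proposition~\ref{prop:bulk-lower-bound}. On $N_{\delta,\rho}$ we collect the interfacial contributions. Since all integrands are nonnegative, the remaining region can be discarded. The interior bulk term then gives $\int_{A_\delta'}(|\nabla Q|^2+\frac1{2\lambda}F_b(Q))$, which tends to the full bulk integral as $\delta\downarrow0$ by monotone convergence.

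\emph{Case $r=+\infty$ (and similarly $r=0$).} On $N_{\delta,\rho}$ the elastic density $\phi_\eps^2|\nabla Q_\eps|^2$ is not needed by the mixing lower bound. We therefore assign a fraction $\beta$ of it to the relaxation estimate: $\beta\int_{N_{\delta,\rho}\cap\{\phi_\eps>\eta\}}\phi_\eps^2|\nabla Q_\eps|^2$. Together with the void energy on $N_{\delta,\rho}\cap\{\phi_\eps\le\eta\}$, this is exactly $R_\eps^{\delta,\rho,\eta,\beta}$. The mixing energy is treated globally by Proposition~\ref{prop:mixing-lower-bound}, applied to the whole of $E_\eps^{\rm mix}$. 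This is legitimate because $E^{\rm mix}$ and $E^{\rm void}$ are separate summands, and the bulk gradient on $N_{\delta,\rho}$ is disjoint from $A_\delta'$. Adding the three liminfs and using \eqref{eq:exterior-relaxation-final} yields $\frac{w_p}{3}\int G+\frac{w_v}{2}\int|Q^-|^2=\int\gamma_\infty$. For $r=0$, Lemma~\ref{lem:strong-anchoring} gives $Q^-=0$, so $G(0,\nu)=(1+\omega s_+^2/9)^{1/2}$ and $\gamma_0(0,\nu)=m_0(0,\nu)$. The same mixing bound then suffices, and the relaxation term may simply be dropped.

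\emph{Critical case $r=\ell$.} This is the main obstacle, because the void potential on the high-phase set must now be shared. The part $\frac{w_v}{2\kappa_\eps}(1-\phi_\eps)^2|Q_\eps|^2$ on $\{\phi_\eps>\eta\}$ is consumed by $\widetilde{\mathcal T}_{\eps,\eta,\theta}$, while the part on $\{\phi_\eps\le\eta\}$ feeds $R_\eps$. These two pieces are disjoint. The bulk gradient on $N_{\delta,\rho}\cap\{\phi_\eps>\eta\}$ is split as $\theta+\beta$ with $\theta+\beta\le1$, and the bulk potential term appears only in $\widetilde{\mathcal T}$. The mixing energy restricted to $\{\phi_\eps>\eta\}$ appears inside $\widetilde{\mathcal T}$, and the rest is discarded. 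We also check that $\frac{w_p}{\eps}W+\frac{w_v}{2\kappa_\eps}(1-\phi)^2|Q|^2=\eps^{-1}V_{\eps,Q_\eps}(\phi_\eps)$, and that $\eps|\nabla\phi|^2+\omega\eps|A\nabla\phi|^2=\eps F^2$, so the density in $\widetilde{\mathcal T}$ is bounded by the actual energy density. Hence
\[
E_{\eps,\kappa_\eps}\ge E^{\rm bulk}_\eps(A_\delta')+\widetilde{\mathcal T}_{\eps,\eta,\theta}(N_{\delta,\rho})+R_\eps^{\delta,\rho,\eta,\beta}.
\]
We then apply Proposition~\ref{prop:critical-transition-lower-bound} and \eqref{eq:exterior-relaxation-final}. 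Letting $\delta,\rho\downarrow0$ first and then $\eta\downarrow0$ produces $\int_{\partial A}(m_\ell(Q^-,\nu_A)+\frac{w_v}{2}|Q^-|^2)$.

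The delicate bookkeeping point is the order of limits. The error term $-C_{\eta,\theta}|N_{\delta,\rho}|$ must vanish before $\eta\downarrow0$, and the relaxation bound requires $\rho\downarrow0$ after $\delta\downarrow0$. We arrange this by taking $\liminf_\eps$, then $\delta\downarrow0$, then $\rho\downarrow0$, and finally $\eta\downarrow0$. Throughout, $\theta,\beta$ stay fixed and positive; neither enters the final constants, since $\theta$ only affects $C_{\eta,\theta}|N|\to0$ and \eqref{eq:exterior-relaxation-final} holds for any $\beta$. The interior region $A_\delta'$ also depends on $\delta$, but its contribution increases monotonically as $\delta\downarrow0$, so it survives the iterated limit.
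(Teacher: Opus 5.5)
Your proposal is correct and follows the paper's proof essentially step for step. It uses the same splitting $E_{\eps,\kappa_\eps}\ge E^{\rm bulk}_\eps(A_\delta)+(\text{interfacial part on }N_{\delta,\rho})+R_\eps^{\delta,\rho,\eta,\beta}$, with the global mixing bound for $r=+\infty$ and for $r=0$ (the latter combined with Lemma~\ref{lem:strong-anchoring}), the term $\widetilde{\mathcal T}_{\eps,\eta,\theta}(N_{\delta,\rho})$ for $r=\ell$, and then $\delta,\rho\downarrow0$ at fixed $\eta$ followed by $\eta\downarrow0$. The only differences are cosmetic: you allow a general $\beta\in(0,1]$ where the paper takes $\beta=1$ for $r=+\infty$, and you require $\theta+\beta\le1$ where the paper takes $\beta=1-\theta$.
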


\begin{proof}
We distinguish the three scaling regimes.

\medskip
\noindent\emph{Case $r=+\infty$.}
Fix $\eta\in(0,1/2)$ and
$0<\delta,\rho<\rho_0$. By nonnegativity of the omitted terms,
\[
    E_{\varepsilon,\kappa_\varepsilon}\geq E_\varepsilon^{\mathrm{bulk}}(A_\delta)+E_\varepsilon^{\mathrm{mix}}+R_\varepsilon^{\delta,\rho,\eta,1}.
\]
Using Propositions~\ref{prop:bulk-lower-bound} and \ref{prop:mixing-lower-bound}, together with \eqref{eq:localized-exterior-relaxation}, and then letting $\delta,\rho\downarrow0$ and $\eta\downarrow0$, we obtain
\begin{align*}
    &\liminf_{\varepsilon\to0}E_{\varepsilon,\kappa_\varepsilon}(Q_\varepsilon,\phi_\varepsilon)\geq{}\\
    &\int_A\left(|\nabla Q|^2+\frac{1}{2\lambda}F_b(Q)\right)\,dx+\int_{\partial A}\left(\frac{w_p}{3}G(Q^-,\nu_A)+\frac{w_v}{2}|Q^-|^2\right)\,d\mathcal H^2=E_0^\infty(Q,A).
\end{align*}

\medskip
\noindent\emph{Case $r=0$.}
By Lemma~\ref{lem:strong-anchoring}, $Q^-=0$ $\mathcal{H}^2-$ a.e. on $\partial A$. For every $0<\rho<\rho_0$,
\[
    E_{\varepsilon,\kappa_\varepsilon}\geq E_\varepsilon^{\mathrm{bulk}}(A_\rho)+E_\varepsilon^{\mathrm{mix}}.
\]
Hence, by
Propositions~\ref{prop:bulk-lower-bound} and \ref{prop:mixing-lower-bound}, followed by $\rho\downarrow0$,
\begin{align*}
    \liminf_{\varepsilon\to0}E_{\varepsilon,\kappa_\varepsilon}(Q_\varepsilon,\phi_\varepsilon)\geq{}&\int_A\left(|\nabla Q|^2+\frac{1}{2\lambda}F_b(Q)\right)\,dx+\frac{w_p}{3}\int_{\partial A}G(0,\nu_A)\,d\mathcal H^2\\
    ={}&E_0^0(Q,A).
\end{align*}

\medskip
\noindent\emph{Case $r=\ell\in(0,\infty)$.}
Fix $\eta\in(0,1/2)$ and $\theta\in(0,1)$, and set \(\beta:=1-\theta\). The restriction \(\eta<1/2\) allows us to apply Proposition~\ref{prop:localized-exterior-relaxation}.

The sets \(A_\delta\) and \(N_{\delta,\rho}\) are disjoint. On \(N_{\delta,\rho}\cap\{\phi_\varepsilon>\eta\}\), the coefficients \(\theta\) and \(\beta=1-\theta\) appearing in \(\widetilde{\mathcal T}_{\varepsilon,\eta,\theta}\) and \(R_\varepsilon^{\delta,\rho,\eta,\beta}\), respectively, sum to one and therefore use exactly the available bulk-gradient energy. On \(N_{\delta,\rho}\cap\{\phi_\varepsilon\leq\eta\}\), \(R_\varepsilon^{\delta,\rho,\eta,\beta}\) uses only the available void energy. All other diffuse-energy terms are nonnegative and may be discarded. Consequently,
\[
    E_{\varepsilon,\kappa_\varepsilon}\geq E_\varepsilon^{\mathrm{bulk}}(A_\delta)+\widetilde{\mathcal T}_{\varepsilon,\eta,\theta}(N_{\delta,\rho})+R_\varepsilon^{\delta,\rho,\eta,\beta}.
\]

Combining Proposition~\ref{prop:bulk-lower-bound}, \eqref{eq:critical-interface-fixed-eta}, and \eqref{eq:localized-exterior-relaxation}, 
\begin{align*}
    \liminf_{\varepsilon\to0}E_{\varepsilon,\kappa_\varepsilon}\geq&\int_{A_\delta}\left(|\nabla Q|^2+\frac{1}{2\lambda}F_b(Q)\right)\,dx+\int_{\partial A}m_\ell^\eta(Q^-,\nu_A)\,d\mathcal H^2\\
    &-C_{\eta,\theta}|N_{\delta,\rho}|+c_{\delta,\rho}\mu_{\delta,\eta,\beta}B_\delta(Q),
\end{align*}
and then letting $\delta,\rho\downarrow0$, gives
\begin{align*}
    \liminf_{\varepsilon\to0}E_{\varepsilon,\kappa_\varepsilon}(Q_\varepsilon,\phi_\varepsilon)\geq{}&\int_A\left(|\nabla Q|^2+\frac{1}{2\lambda}F_b(Q)\right)\,dx\\
    &+\int_{\partial A}\left(m_\ell^\eta(Q^-,\nu_A)+\frac{w_v}{2}(1-\eta)|Q^-|^2\right)\,d\mathcal H^2.
\end{align*}
Finally, $m_\ell^\eta(Q^-,\nu_A)\uparrow m_\ell(Q^-,\nu_A)$ as $\eta\downarrow 0$. Thus, by monotone convergence,
\begin{align*}
    &\liminf_{\varepsilon\to0}E_{\varepsilon,\kappa_\varepsilon}(Q_\varepsilon,\phi_\varepsilon)\\
    &\quad\geq\int_A\left(|\nabla Q|^2+\frac{1}{2\lambda}F_b(Q)\right)\,dx+\int_{\partial A}\left(m_\ell(Q^-,\nu_A)+\frac{w_v}{2}|Q^-|^2\right)\,d\mathcal H^2=E_0^\ell(Q,A).
\end{align*}
\end{proof}

\section{Recovery Sequences and the Upper Bound}\label{sec:upper-bound}
We construct recovery sequences in the three scaling regimes. The phase transition is placed in an $O(\varepsilon)$-neighborhood of a slightly shifted interface and is described by a boundary-dependent one-dimensional profile realizing the transition density $m_r$. For $r>0$, the tensor is then relaxed to zero on the exterior scale $\kappa_\varepsilon$, whereas for $r=0$ it is cut off before the phase-transition layer.
\subsection{Uniform one-dimensional profiles}
\label{sec:profiles}
For $p=(q,\nu)\in\mathcal S_0\times\mathbb S^2$, set
\begin{equation*}
    a(p):=w_pG(q,\nu)^2, \qquad B_\ell(p,s):=w_pW(s)+\frac{w_v}{2\ell}|q|^2(1-s)^2.
\end{equation*}

In the regime $r=+\infty$, define
\begin{equation}\label{eq:classical-profile-energy}
    J_\infty(\eta;p):=\int_{\mathbb R}\left(a(p)|\eta'|^2+w_pW(\eta)\right)\,dz,
\end{equation}
subject to $\eta(-\infty)=1$ and $\eta(+\infty)=0$. The arithmetic--geometric mean inequality gives
\begin{equation*}
    J_\infty(\eta;p)\geq2\int_0^1\sqrt{a(p)w_pW(s)}\,ds=m_\infty(p).
\end{equation*}
Equality is attained by the decreasing equipartition profile $\eta(z)=1/(1+\exp(z/G(p)))$.

For \(R>2\), define
\[
    \eta_R(p,z):=\frac{1}{1+\exp\!\left((z+R/2)/G(p)\right)}.
\]
Let $\vartheta(s):=3s^2-2s^3$ for $s\in [0,1]$, and define
\[
    \eta_{\infty,R}(p,z):=
\begin{cases}
    1,& z\le -R,\\[1mm]
    1-\bigl(1-\eta_R(p,-R+1)\bigr)\vartheta(z+R),& -R<z<-R+1,\\[1mm]
    \eta_R(p,z),& -R+1\le z\le -1,\\[1mm]
    \eta_R(p,-1)\bigl(1-\vartheta(z+1)\bigr),& -1<z<0,\\[1mm]
    0,& z\ge0.
\end{cases}
\]
Then $\eta_{\infty,R}\in H^1(\mathbb{R};[0,1])$, $\eta_{\infty,R}=1$ on $(-\infty,-R]$ and $\eta_{\infty,R}=0$ on $[0,\infty)$. 

\begin{lemma}\label{lem:uniform-classical-profile}
Let $K\subset \mathcal S_0\times \mathbb S^2$ be compact. For \(p=(q,\nu)\in K\), let \(\eta_{\infty,R}(p,\cdot)\) be the profile defined above. Then, for every \(R>2\), $\eta_{\infty,R}\in\operatorname{Lip}\bigl(K\times[-R,0];[0,1]\bigr)$, and, for every \(p\in K\),
\[
    \eta_{\infty,R}(p,-R)=1,\qquad\eta_{\infty,R}(p,0)=0,\qquad\partial_z\eta_{\infty,R}(p,z)\le0\quad\text{for a.e. }z\in(-R,0).
\]

Moreover, there exist constants \(C_K,c_K>0\), depending only on \(K\), such that
\[
    0\le J_\infty\bigl(\eta_{\infty,R}(p,\cdot);p\bigr)-m_\infty(p)\le C_K e^{-c_KR}
\]
for every \(p\in K\) and every \(R>2\). In particular,
\[
    \sup_{p\in K}\left|J_\infty\bigl(\eta_{\infty,R}(p,\cdot);p\bigr)-m_\infty(p)\right|\to0\qquad\text{as }R\to+\infty.
\]
\end{lemma}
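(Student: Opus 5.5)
The plan is to reduce everything to uniform bounds on the explicit logistic profile, using that $G$ is continuous and bounded above and below on $K$. Since $G(q,\nu)\ge|\nu|=1$ and $G\le C(1+|q|)$, compactness of $K$ gives constants $1\le g_*\le G(p)\le g^*<\infty$. Moreover, $p\mapsto G(p)$ is Lipschitz on $K$, because $F(q,\xi)$ is a Euclidean norm of $(\xi,\sqrt\omega A(q)\xi)$ and $A$ is affine in $q$.

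\emph{Lipschitz regularity and monotonicity.} The function $(p,z)\mapsto\eta_R(p,z)=1/(1+\exp((z+R/2)/G(p)))$ is smooth in $(G,z)$ on $[g_*,g^*]\times[-R,0]$, hence Lipschitz on $K\times[-R,0]$. The glued profile $\eta_{\infty,R}$ is built on each of the five pieces from $\eta_R(p,\cdot)$, the constants $\eta_R(p,-R+1)$ and $\eta_R(p,-1)$, and the cubic $\vartheta$. These pieces match continuously at the breakpoints, since $\vartheta(0)=0$ and $\vartheta(1)=1$. A piecewise-Lipschitz function that is continuous across finitely many fixed $z$-breakpoints is Lipschitz on $K\times[-R,0]$. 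The boundary values are immediate. For monotonicity, $\eta_R$ is decreasing in $z$ and $\vartheta'\ge0$ on $[0,1]$. On $(-R,-R+1)$ the profile is $1$ minus a nonnegative multiple of an increasing function, and on $(-1,0)$ it is a nonnegative constant times $1-\vartheta$, which is decreasing. So $\partial_z\eta_{\infty,R}\le0$ a.e.

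\emph{Energy estimate.} The lower bound $J_\infty\ge m_\infty$ is the AM--GM computation recorded before the statement, applied to any decreasing $H^1$ profile from $1$ to $0$. For the upper bound, compare with the exact optimal profile $\eta^*(z)=\eta_R(p,z)$ on all of $\mathbb R$, which is a translate of the equipartition profile and therefore has $J_\infty(\eta^*;p)=m_\infty(p)$. The difference $J_\infty(\eta_{\infty,R})-m_\infty$ is bounded by the energy of $\eta_{\infty,R}$ on the two gluing intervals together with the tail regions $z\le-R$ and $z\ge0$, since the two profiles agree on $[-R+1,-1]$. We may drop the tails of $\eta^*$ because its energy is nonnegative.

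Set $\tau:=1-\eta_R(p,-R+1)$ and $\sigma:=\eta_R(p,-1)$. Both are bounded by $\exp(-(R/2-1)/g^*)$, uniformly in $K$. On $(-R,-R+1)$, the derivative is $\tau\vartheta'$ with $|\vartheta'|\le 3/2$, and the distance to $1$ is at most $\tau$. Since $W(s)\le(1-s)^2$ near $1$, the energy there is at most $a(p)\tfrac94\tau^2+w_p\tau^2$. Symmetrically, on $(-1,0)$, $W(s)\le s^2$ gives a bound $C\sigma^2$. For $z\ge0$ and $z\le-R$, $\eta_{\infty,R}$ is constant at a zero of $W$, so it contributes nothing. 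Since $a(p)\le w_p(g^*)^2$, we obtain $0\le J_\infty-m_\infty\le C_Ke^{-2(R/2-1)/g^*}$, which is of the form $C_Ke^{-c_KR}$ with $c_K=1/g^*$. The supremum statement follows directly.

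The main point needing care is that the equipartition profile $1/(1+e^{z/G})$ has exact energy $m_\infty(p)$ with the normalization $a=w_pG^2$. Indeed, $\eta'=-\eta(1-\eta)/G$ gives $a|\eta'|^2=w_pW(\eta)$, so equipartition holds. I expect this to be routine once verified, so the only real obstacle is making every constant uniform in $p$, which the bounds $g_*\le G\le g^*$ handle.
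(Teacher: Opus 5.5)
Your proposal is correct and follows essentially the same route as the paper: uniform bounds $1\le G\le g^*$ on $K$, exponential smallness of $1-\eta_R(p,-R+1)$ and $\eta_R(p,-1)$, quadratic vanishing of $W$ at the wells to control the cubic gluing pieces, and Lipschitz dependence inherited from $G$. Where the paper estimates the discarded tails of the logistic profile, you drop them using nonnegativity of their energy, which is a slight streamlining of the same argument rather than a different one.
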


\begin{proof}
Since \(K\) is compact and \(G\) is continuous, there exists \(G_K<+\infty\) such that \(1\le G(p)\le G_K\) for every \(p\in K\). The two discarded tails of the translated logistic profile are bounded uniformly by $C_Ke^{-c_KR}$. Since $W(s)\leq Cs^2$ near $0$ and $W(s)\leq C(1-s)^2$ near $1$, the interpolation and discarded-tail energies satisfy the same bound. Hence
\[
    0\leq J_\infty(\eta_{\infty,R}(p,\cdot);p)-m_\infty(p)\leq C_Ke^{-c_KR}.
\]
The asserted Lipschitz dependence follows from the Lipschitz continuity of $G$ on $K$ and the explicit definition of $\eta_{\infty,R}$.
\end{proof}

We next consider the critical regime $r=\ell\in(0,\infty)$. For $R>0$, set
\[
    \mathcal P_R:=\left\{\eta\in H^1((-R,0);[0,1]):\eta(-R)=1,\ \eta(0)=0\right\},
\]
and define
\begin{equation*}
    J_{\ell,R}(\eta;p):=\int_{-R}^{0}\left(a(p)|\eta'|^2+B_\ell(p,\eta)\right)\,dz.
\end{equation*}
By the arithmetic--geometric mean inequality and the chain rule,
\begin{align*}
    J_{\ell,R}(\eta;p)&\geq2\int_{-R}^0\sqrt{a(p)B_\ell(p,\eta)}\,|\eta'|\,dz\geq2\int_0^1\sqrt{a(p)B_\ell(p,s)}\,ds=m_\ell(p).
\end{align*}

When $q\neq0$, however, $B_\ell(p;0)=w_v|q|^2/(2\ell)>0$. Consequently, a whole-line profile converging to $0$ as $z\to+\infty$ would have infinite energy. We therefore use finite-interval near-minimizers that terminate exactly at $\eta=0$.

\begin{lemma}[Uniform critical profiles on compact parameter sets]
\label{lem:uniform-critical-profiles}
Let $K\subset \mathcal S_0\times \mathbb S^2$ be compact, and let $\delta>0$. Then there exist $R_\delta>0$ and a function $\widehat\eta_\delta\in\operatorname{Lip}\bigl(K\times[-R_\delta,0];[0,1]\bigr)$ such that, for every $p=(q,\nu)\in K$,
\[
    \widehat\eta_\delta(p,-R_\delta)=1,\qquad\widehat\eta_\delta(p,0)=0,\qquad\partial_z\widehat\eta_\delta(p,z)\le 0
\]
for a.e. $z\in(-R_\delta,0)$, and
\[
    \int_{-R_\delta}^{0}\left(a(p)|\partial_z\widehat\eta_\delta(p,z)|^2+B_\ell\bigl(p,\widehat\eta_\delta(p,z)\bigr)\right)\,dz\le m_\ell(p)+\delta.
\]
\end{lemma}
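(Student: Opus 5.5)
The plan is to construct the profile explicitly through the equipartition ODE, with the speed regularized so that the transition takes finite time uniformly in $p$.

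\emph{Step 1 (the ODE).} For $p\in K$ and a small parameter $\tau>0$, let $\widehat\eta$ solve
\[
\partial_z\widehat\eta=-\sqrt{(B_\ell(p,\widehat\eta)+\tau)/a(p)},\qquad \widehat\eta(p,z_0(p))=1,
\]
and stop it when it reaches $0$.

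\emph{Step 2 (uniform bounds on the speed).} Since $K$ is compact and $G\ge1$, we have $w_p\le a(p)\le a_K$. The right-hand side of the ODE is then bounded above by a constant $C_K$. It is bounded below by $\sqrt{\tau/a_K}>0$ for $s\in[0,1]$; this regularization removes the degeneracy of $W$ at $s=1$, and at $s=0$ when $q=0$.

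\emph{Step 3 (finite, uniform transition time).} By Step 2 the time to pass from $1$ to $0$ is
\[
T(p)=\int_0^1\sqrt{a(p)/(B_\ell(p,s)+\tau)}\,ds\le\sqrt{a_K/\tau}.
\]
Set $R_\delta:=\sup_K T$. Place the profile so that it hits $0$ at $z=0$, and extend it by $1$ on $[-R_\delta,-T(p)]$. This gives $\widehat\eta_\delta(p,-R_\delta)=1$, $\widehat\eta_\delta(p,0)=0$, and $\partial_z\widehat\eta_\delta\le0$.

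\emph{Step 4 (energy estimate).} On the transition part, equipartition gives $a|\eta'|^2=B_\ell+\tau$. The energy there is therefore
\[
\int_0^1 \frac{2B_\ell+\tau}{\sqrt{(B_\ell+\tau)/a}}\,ds\le 2\int_0^1\sqrt{a(B_\ell+\tau)}\,ds\le m_\ell(p)+2\sqrt{a_K\tau}.
\]
On the constant part $\eta=1$ we have $B_\ell(p,1)=0$, so it costs nothing. Choosing $\tau=\delta^2/(4a_K)$ yields the bound $m_\ell(p)+\delta$.

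\emph{Step 5 (Lipschitz dependence on $(p,z)$).} Equivalently, write $z\mapsto\widehat\eta$ as the inverse of
\[
Z(p,s)=-\int_0^s\sqrt{a(p)/(B_\ell(p,t)+\tau)}\,dt .
\]
The integrand is locally Lipschitz in $p$ because $a$ and $B_\ell$ are Lipschitz on $K$ and the denominator is at least $\tau$. Hence $Z$ is Lipschitz in $p$, and $\partial_sZ$ is bounded above and below uniformly. Consequently the inverse $\widehat\eta(p,z)$, for $z\in[Z(p,1),0]$, is Lipschitz in both variables. The gluing with the constant $1$ at $z=Z(p,1)=-T(p)$ is continuous, and $T$ is Lipschitz in $p$, so the glued function is Lipschitz on $K\times[-R_\delta,0]$.

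The main obstacle is this last uniformity of the Lipschitz bound across the glue point. If the direct argument is awkward, I would instead use the linear reparametrization $z\mapsto zT(p)/R_\delta$, which keeps the energy bounded by $m_\ell(p)+\delta$ up to a factor controlled by $\sup_K T/\inf_K T$. If this factor causes trouble, I would handle it by slightly adjusting $\tau$ and $\delta$.
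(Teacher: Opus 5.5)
Your proof is correct, and it takes a genuinely different route from the paper's.

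The paper keeps the exact potential $B_\ell$ and runs the equipartition flow only on the truncated range $[\alpha,1-\beta]$. There $B_\ell\ge w_pW>0$ keeps the speed away from zero. It then glues on two cubic caps:
one near $s=1$ costing $O(\beta^2)$, using the quadratic vanishing of $B_\ell$ at $s=1$; and one of length $\alpha$ near $s=0$ costing $O(\alpha)$, which absorbs the nonzero value $B_\ell(p,0)=\frac{w_v}{2\ell}|q|^2$. Finally it pads with $1$.

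You instead regularize to $B_\ell+\tau$. The speed is then bounded below by $\sqrt{\tau/a_K}$ on all of $[0,1]$, so a single inverse-function profile covers the whole transition with the explicit error $2\sqrt{a_K\tau}$. Your pointwise bound $(2B_\ell+\tau)\sqrt{a/(B_\ell+\tau)}\le 2\sqrt{aB_\ell}+2\sqrt{a\tau}$, together with $\sqrt{a(p)}=\sqrt{w_p}\,G(q,\nu)$, gives exactly $m_\ell(p)+\delta$ for $\tau=\delta^2/(4a_K)$.

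Your version is shorter and has only one parameter-dependent junction. The paper's version keeps the profile an exact equipartition solution of the true potential on most of the layer, and it parallels the truncated logistic construction of Lemma~\ref{lem:uniform-classical-profile}. The price is several moving junctions and two cap estimates.

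The step you flag as the main obstacle is already settled by your direct argument. Take $T(p)\le T(\tilde p)$.
On the common range $[-T(p),0]$, the inverse-function estimate gives $|\widehat\eta_\delta(p,z)-\widehat\eta_\delta(\tilde p,z)|\le(\operatorname{Lip}_pZ/\inf|\partial_sZ|)\,|p-\tilde p|$.
For $z\in[-T(\tilde p),-T(p)]$, one has $\widehat\eta_\delta(p,z)=1=\widehat\eta_\delta(\tilde p,-T(\tilde p))$. Hence $|\widehat\eta_\delta(p,z)-\widehat\eta_\delta(\tilde p,z)|\le\sup|\partial_z\widehat\eta_\delta|\,|T(\tilde p)-T(p)|\le C|p-\tilde p|$.

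So you should not fall back on the linear reparametrization. With $\lambda=R_\delta/T(p)\ge1$, it turns the energy into $\int_{-T(p)}^0\bigl(\lambda^{-1}a|\eta'|^2+\lambda B_\ell(p,\eta)\bigr)\,dw$. This exceeds $m_\ell(p)$ by roughly $\frac12 m_\ell(p)(\lambda-1)^2/\lambda$. Since $T$ in general varies over $K$ (with $G$ and $|q|$), this error is not small, and shrinking $\tau$ or $\delta$ does not cure it.
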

\begin{proof}
By compactness of \(K\), there exist constants $0<a_*\leq a(p)\leq a^*<+\infty$ for every $p\in K$. Fix \(\alpha,\beta\in(0,1/2)\). Since \(B_\ell\) is continuous and strictly positive on the compact set \(K\times[\alpha,1-\beta]\), there exist constants \(0<b_*\leq b^*<+\infty\) such that $b_*\leq B_\ell(p,s)\leq b^*$ for every $(p,s)\in K\times[\alpha,1-\beta]$. Moreover, $f(p,s):=\sqrt{a(p)/B_\ell(p,s)}$ is Lipschitz on \(K\times[\alpha,1-\beta]\), and there exist constants \(0<c_*\leq c^*<+\infty\) such that $c_*\leq f(p,s)\leq c^*$ on this set.

For \(p\in K\), define
\[
    \Xi_p(s):=\int_\alpha^s f(p,t)\,dt,\qquad L_p:=\Xi_p(1-\beta).
\]
Then \(p\mapsto L_p\) is Lipschitz on \(K\). Furthermore, \(\Xi_p\) is strictly increasing and satisfies $$c_*|s-\widetilde s|\leq|\Xi_p(s)-\Xi_p(\widetilde s)|\leq c^*|s-\widetilde s|.$$ Let $S_p:[0,L_p]\to[\alpha,1-\beta]$ denote the inverse of \(\Xi_p\). The family of inverse maps is jointly Lipschitz on $\mathcal D_{\alpha,\beta}:=\{(p,u):p\in K,\ 0\leq u\leq L_p\}$. Indeed, for \((p,u),(\widetilde p,\widetilde u) \in\mathcal D_{\alpha,\beta}\), the preceding lower bound and the Lipschitz dependence of \(\Xi_p\) on \(p\) give
\[
    |S_p(u)-S_{\widetilde p}(\widetilde u)|\leq C_{\alpha,\beta,K}\bigl(|p-\widetilde p|+|u-\widetilde u|\bigr).
\]

Let $L_{\max}:=\max_{p\in K}L_p$ and $R_{\alpha,\beta}:=1+\alpha+L_{\max}$, and let \(\vartheta(s):=3s^2-2s^3\) for \(s\in[0,1]\). For \(p\in K\), define
\[
\eta_{\alpha,\beta}(p,z):=
\begin{cases}
    1, &-R_{\alpha,\beta}\leq z\leq-L_p-\alpha-1, \\[0.4em]
    1-\beta\, \vartheta\bigl(z+L_p+\alpha+1\bigr), &-L_p-\alpha-1<z<-L_p-\alpha,\\[0.4em]
    S_p(-z-\alpha), &-L_p-\alpha\leq z\leq-\alpha,\\[0.4em]
    \alpha\left( 1-\vartheta\left(\dfrac{z+\alpha}{\alpha}\right)\right), &-\alpha<z<0.
\end{cases}
\]
The values of the neighboring pieces agree at all the junction points. In addition, every piece is nonincreasing in \(z\), and hence $\partial_z\eta_{\alpha,\beta}(p,z)\leq0$ for almost every \(z\in(-R_{\alpha,\beta},0)\).

The Lipschitz dependence of \(L_p\), the joint Lipschitz continuity of \(S_p\), and the uniform Lipschitz bounds on the cubic interpolation pieces imply $\eta_{\alpha,\beta}\in\operatorname{Lip}\bigl(K\times[-R_{\alpha,\beta},0];[0,1]\bigr)$. Here the fact that the definitions agree at the parameter-dependent junction points ensures that the Lipschitz estimate remains valid across the moving interfaces.

On the central interval, $a(p)|\partial_z\eta_{\alpha,\beta}|^2=B_\ell(p,\eta_{\alpha,\beta})$; hence its energy is
\[
    2\int_\alpha^{1-\beta}\sqrt{a(p)B_\ell(p,s)}\,ds.
\]

On the left interpolation interval, one has \(1-\beta\leq\eta_{\alpha,\beta}\leq1\). Compactness of \(K\) gives $B_\ell(p,\eta_{\alpha,\beta})\leq C_K\beta^2$ and $|\partial_z\eta_{\alpha,\beta}|\leq C\beta$. Thus the energy of the left interpolation interval is bounded by \(C_K\beta^2\).

On the right interpolation interval, whose length is \(\alpha\), one has $0\leq\eta_{\alpha,\beta}\leq\alpha$ and $|\partial_z\eta_{\alpha,\beta}|\leq C$. Since \(B_\ell\) is uniformly bounded on \(K\times[0,1]\), the energy of this interval is bounded by \(C_K\alpha\). Consequently,
\begin{align*}
    J_{\ell,R_{\alpha,\beta}}\bigl(\eta_{\alpha,\beta}(p,\cdot);p\bigr)&\leq 2\int_\alpha^{1-\beta}\sqrt{a(p)B_\ell(p,s)}\,ds+C_K(\alpha+\beta^2)\\
    &\leq m_\ell(p)+C_K(\alpha+\beta^2)
\end{align*}
uniformly for \(p\in K\).

Choose \(\alpha,\beta\in(0,1/2)\) sufficiently small that $C_K(\alpha+\beta^2)\leq\delta$. Setting $R_\delta:=R_{\alpha,\beta}$ and $\widehat\eta_\delta:=\eta_{\alpha,\beta}$ proves the result.
\end{proof}

When $r=0$, finite limiting energy requires $q=0$. Since $G(0,\nu)=\left(1+\omega s_+^2/9\right)^{1/2}$ is independent of $\nu$, Lemma~\ref{lem:uniform-classical-profile}, applied with $q=0$, provides a single truncated logistic profile that may be used at every boundary point.

\subsection{Boundary-dependent profiles}
\label{sec:boundary-profiles}
We now evaluate the uniform profile families along the boundary data.

\begin{corollary}[Boundary-dependent profile selection] \label{lem:boundary-profile-selection}
Let $A\Subset\Omega$ have $C^2$ boundary and let $Q^-\in C^1(\partial A;\mathcal S_0)$. For every $\delta>0$, there exist $R_\delta>0$ and $\eta_\delta\in W^{1,\infty}\bigl(\partial A\times(-R_\delta,0);[0,1]\bigr)$ such that
\[
    \eta_\delta(y,-R_\delta)=1,\qquad\eta_\delta(y,0)=0,\qquad\partial_z\eta_\delta\le0.
\]
Moreover, the following regime-dependent estimates hold.

If \(r=+\infty\), then
\[
    w_p\int_{\partial A}\int_{-R_\delta}^{0}\left(G(Q^-,\nu_A)^2|\partial_z\eta_\delta|^2+W(\eta_\delta)\right)\,dz\,d\mathcal H^2\le\int_{\partial A}m_\infty(Q^-,\nu_A)\,d\mathcal H^2+\delta.
\]

If \(r=\ell\in(0,+\infty)\), then
\begin{align*}
    &\int_{\partial A}\int_{-R_\delta}^{0}\biggl(w_pG(Q^-,\nu_A)^2|\partial_z\eta_\delta|^2+w_pW(\eta_\delta)+\frac{w_v}{2\ell}|Q^-|^2(1-\eta_\delta)^2\biggr)\,dz\,d\mathcal H^2\\
    &\qquad\le\int_{\partial A}m_\ell(Q^-,\nu_A)\,d\mathcal H^2+\delta.
\end{align*}

If \(r=0\) and \(Q^-=0\), then \(\eta_\delta\) may be chosen independent of \(y\), and
\[
    w_p\int_{\partial A}\int_{-R_\delta}^{0}\left[G(0,\nu_A)^2|\partial_z\eta_\delta|^2+W(\eta_\delta)\right]\,dz\,d\mathcal H^2\le\int_{\partial A}m_0(0,\nu_A)\,d\mathcal H^2+\delta.
\]
\end{corollary}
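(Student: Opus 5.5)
The corollary follows by composing the uniform profile families of Lemmas~\ref{lem:uniform-classical-profile} and \ref{lem:uniform-critical-profiles} with the boundary map $y\mapsto p(y):=(Q^-(y),\nu_A(y))$. Since $\partial A$ is compact and of class $C^2$, $\nu_A$ is $C^1$; since $Q^-\in C^1(\partial A;\mathcal S_0)$, the map $p:\partial A\to\mathcal S_0\times\mathbb S^2$ is Lipschitz. Its image $K:=p(\partial A)$ is compact. We then set
\[
    \eta_\delta(y,z):=\widehat\eta(p(y),z),
\]
where $\widehat\eta$ is the profile family chosen on $K$ according to the regime. Because $\widehat\eta$ is Lipschitz on $K\times[-R,0]$ and $p$ is Lipschitz, the composition lies in $W^{1,\infty}(\partial A\times(-R_\delta,0);[0,1])$. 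The endpoint values $\eta_\delta(y,-R_\delta)=1$, $\eta_\delta(y,0)=0$ and the monotonicity $\partial_z\eta_\delta\le0$ are inherited pointwise in $y$.

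\emph{Case $r=\ell\in(0,\infty)$.} Apply Lemma~\ref{lem:uniform-critical-profiles} on $K$ with tolerance $\delta':=\delta/\mathcal H^2(\partial A)$ to obtain $R_{\delta'}$ and $\widehat\eta_{\delta'}$, and take $R_\delta:=R_{\delta'}$. Since $a(p)=w_pG(q,\nu)^2$ and $B_\ell(p,s)=w_pW(s)+\frac{w_v}{2\ell}|q|^2(1-s)^2$, the inner integrand at $y$ is exactly $J_{\ell,R_\delta}(\widehat\eta_{\delta'}(p(y),\cdot);p(y))$. This is at most $m_\ell(p(y))+\delta'$. Integrating over $\partial A$ gives the claimed bound, and measurability in $y$ is automatic by continuity.

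\emph{Case $r=+\infty$.} Use Lemma~\ref{lem:uniform-classical-profile} on $K$ and choose $R>2$ with $C_Ke^{-c_KR}\le\delta/\mathcal H^2(\partial A)$. Since $\eta_{\infty,R}$ is constant outside $[-R,0]$, $W(1)=W(0)=0$ and the derivative vanishes there, the whole-line energy $J_\infty$ coincides with the integral over $(-R,0)$. Integrating over $\partial A$ then gives the bound. Lemma~\ref{lem:uniform-classical-profile} asserts that $G\ge1$ on $K$ uniformly; this is needed for the logistic rescaling, and it holds since $G(q,\nu)^2=1+\omega|A(q)\nu|^2\ge1$.

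\emph{Case $r=0$ with $Q^-=0$.} Here $G(0,\nu)$ does not depend on $\nu$, so $K$ may be replaced by the compact set $\{0\}\times\mathbb S^2$, on which $p\mapsto\eta_{\infty,R}(p,\cdot)$ depends only on $G(0,\nu)$. Hence the profile is a single $y$-independent function. The bound follows as in the previous case, because $m_0(0,\nu)=\frac{w_p}{3}G(0,\nu)=m_\infty(0,\nu)$.

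The only point requiring care is the Lipschitz regularity of the composition, and in particular the joint Lipschitz property of $\widehat\eta$ across the moving junction points. This is already supplied by the lemmas, together with the $C^1$ regularity of $Q^-$ and of $\nu_A$, so no real obstacle remains.
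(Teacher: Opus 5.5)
Your proposal is correct and follows the paper's proof essentially step for step. You compose the uniform families of Lemmas~\ref{lem:uniform-classical-profile} and \ref{lem:uniform-critical-profiles} with $p(y)=(Q^-(y),\nu_A(y))$ on the compact image $K$, take the tolerance to be $\delta$ divided by $\mathcal H^2(\partial A)$, integrate the pointwise bound, and for $r=0$ work on $\{0\}\times\mathbb S^2$, using that $G(0,\nu)$ is independent of $\nu$. The differences are minor: the paper divides by $1+\mathcal H^2(\partial A)$, which avoids dividing by $\mathcal H^2(\partial A)$, and you make explicit two points the paper leaves implicit, namely that $J_\infty$ of the truncated profile reduces to the integral over $(-R,0)$ and that $m_0(0,\nu)=m_\infty(0,\nu)$.
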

\begin{proof}
Set $p(y):=(Q^-(y),\nu_A(y))$ and $K:=p(\partial A)$. Since $Q^-\in C^1(\partial A;\mathcal S_0)$ and $\partial A$ is $C^2$, the map $p$ is $C^1$ and $K$ is compact. Set $\delta_0:=\delta/(1+\mathcal H^2(\partial A))$.

If $r=+\infty$, Lemma~\ref{lem:uniform-classical-profile} allows us to choose $R_\delta>2$ such that
\[
    \sup_{p\in K}\left(J_\infty(\eta_{\infty,R_\delta}(p,\cdot);p)-m_\infty(p)\right)\leq\delta_0.
\]
We then set $\eta_\delta(y,z):=\eta_{\infty,R_\delta}(p(y),z)$.

If $r=\ell\in(0,\infty)$, apply Lemma~\ref{lem:uniform-critical-profiles} with error $\delta_0$, and define $\eta_\delta(y,z):=\widehat\eta_{\delta_0}(p(y),z)$.

Finally, if $r=0$, apply Lemma~\ref{lem:uniform-classical-profile} to $K_0:=\{(0,\nu):\nu\in\mathbb S^2\}$. Since $G(0,\nu)$ is independent of $\nu$, the resulting profile may be chosen independent of $y$.

In each case, the asserted $W^{1,\infty}$ regularity follows from the corresponding uniform Lipschitz dependence and the chain rule. Integrating the pointwise profile estimate over $\partial A$ gives the claimed bound.
\end{proof}

\subsection{Construction of the recovery sequence}
\label{sec:recovery}
\begin{proposition}[Recovery sequence]
    Let $(Q,A)\in\mathcal A_0^{\rm reg}$ satisfy $E_0^r(Q,A)<\infty$. Then there exists $(Q_\eps,\phi_\eps)\in\mathcal A_\eps$ such that
\[
    \phi_\eps\to\chi_A \quad\text{strongly in }L^1(\Omega),\qquad Q_\eps\to Q \quad\text{strongly in }L^2(\Omega;\mathcal{S}_0),
\]
and
\[
    \limsup_{\eps\to0}E_{\eps,\kappa_\eps}(Q_\eps,\phi_\eps)\leq E_0^r(Q,A).
\]
\end{proposition}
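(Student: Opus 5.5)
The plan is to reduce to smooth data by a density argument, build the sequence in tubular coordinates, and close with a diagonal argument. First approximate $Q$: since $A$ has $C^2$ boundary, $Q$ can be approximated in $H^1(A;\mathcal S_0)$ by $Q_k\in C^1(\overline A;\mathcal S_0)$. Then $Q_k^-\to Q^-$ in $L^2(\partial A)$ by the trace theorem. Since $F_b$ has quartic growth and $H^1\hookrightarrow L^6$, we also get $\int_A F_b(Q_k)\to\int_A F_b(Q)$. Because $\gamma_r(q,\nu)\le C_r(1+|q|^2)$ is continuous for $r\in(0,\infty]$, dominated convergence gives $E_0^r(Q_k,A)\to E_0^r(Q,A)$. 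For $r=0$, finiteness forces $Q^-=0$, and we approximate by $C^1$ maps vanishing near $\partial A$, e.g.\ $Q$ times cutoffs $1-\chi(d_A/\tau)$, using Hardy's inequality for $H^1_0$-type traces. Since $\Gamma$-limsup is lower semicontinuous in $L^2$, it then suffices to treat $Q\in C^1(\overline A)$, with $Q\equiv0$ near $\partial A$ when $r=0$.

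For such $Q$ we build the phase field from Corollary~\ref{lem:boundary-profile-selection}, with profile $\eta_\delta$ and length $R_\delta$. Writing $x=\Psi(y,t)$, set $\phi_\eps(x)=\eta_\delta\bigl(y,(t-s_\eps)/\eps\bigr)$ in the layer $t\in(s_\eps-\eps R_\delta,s_\eps)$. Set $\phi_\eps=1$ inside that layer and $\phi_\eps=0$ outside it. Since $\partial_z\eta_\delta\le0$, the volume $\int\phi_\eps$ is continuous and monotone in the shift $s_\eps=O(\eps)$, so the intermediate value theorem gives $s_\eps$ with $\int\phi_\eps=V_0$.

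The tensor is $Q_\eps=\widetilde Q$ on $\{t<s_\eps\}$, where $\widetilde Q$ is a $C^1$ extension of $Q$ across $\partial A$ to a thin neighbourhood. For $r>0$, set $Q_\eps(\Psi(y,t))=\widetilde Q(\Psi(y,s_\eps))\,e^{-(t-s_\eps)/\kap}$ for $t>s_\eps$. This is cut off smoothly at distance $\gg\kap$ at negligible cost, and equals $0$ beyond.

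The energy computation uses the tubular change of variables \eqref{eq:tubular-change-of-variables} with $J_A=1+O(t)$, and the fact that $\nabla\phi_\eps=\eps^{-1}\partial_z\eta_\delta\,\nu_A+O(1)$, because the tangential derivatives of $\eta_\delta$ are bounded by its $W^{1,\infty}$ regularity. This reduces the mixing term to the one-dimensional profile energy, with $|A(Q_\eps)\nabla\phi_\eps|^2$ giving $G(Q^-,\nu_A)^2|\partial_z\eta_\delta|^2\eps^{-2}$ up to $o(1)$ errors, since $Q_\eps\to Q^-$ uniformly in the $O(\eps)$ layer. The bulk term tends to $\int_A$, since the layer has measure $O(\eps)$ and $Q_\eps$ is bounded. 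The void term splits into two parts.
\begin{enumerate}
\item[(a)] The exterior exponential, where $\phi_\eps=0$, gives exactly $\frac{w_v}{2}|Q^-|^2$ per unit area, the classical $\tanh\to1$ computation.
\item[(b)] The part inside the phase layer gives $\kap^{-1}\eps\int\frac{w_v}{2}|Q^-|^2(1-\eta_\delta)^2\,dz$, which tends to $\frac{w_v}{2\ell}\int(\cdots)$ when $r=\ell$ and to $0$ when $r=\infty$. Similarly, $\kap|\nabla Q_\eps|^2$ over the layer is $O(\kap\eps)\to0$.
\end{enumerate}
Combining with the corollary yields the bound $\limsup\le E_0^r(Q,A)+C\delta$.

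For $r=0$ the case is simpler: $Q$ vanishes near $\partial A$, so $Q_\eps=Q$ with zero extension, the void term vanishes, and the mixing term gives $m_0(0,\nu)$. Finally, let $\delta\to0$ and $k\to\infty$ diagonally. Strong $L^2$ convergence of $Q_\eps$ follows since the modification lives on sets of measure $O(\eps+\kap)$ with bounded values.

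The main obstacle I expect is the critical regime bookkeeping. The tensor inside the $O(\eps)$ transition layer must be essentially frozen at $Q^-(y)$, so that the $\kap^{-1}(1-\phi_\eps)^2|Q_\eps|^2$ term produces precisely the coupled potential $B_\ell$. At the same time, the exterior relaxation must start exactly where $\eta_\delta$ reaches $0$, so that the two costs add to $m_\ell+\frac{w_v}{2}|q|^2$ without overlap. I would check that the mismatch between $r_\eps$ and $\ell$ and the $O(\eps)$ variation of $\widetilde Q$ across the layer contribute only $o(1)$, using the uniform Lipschitz bounds from Lemma~\ref{lem:uniform-critical-profiles}.
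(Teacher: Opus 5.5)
Your proposal is correct and follows essentially the same route as the paper: reduction to $C^1$ data (compactly supported in the case $r=0$), the boundary-dependent profile of Corollary~\ref{lem:boundary-profile-selection} with an $O(\eps)$ volume-fixing shift, $\kappa_\eps$-scale exterior decay of the trace, and a diagonal argument. The differences are only technical: you keep the $C^1$ extension of $Q$ across the layer, at an $O(\eps)$ cost, rather than freezing it at $Q^-(y)$ with the matching region $J_\eps^\delta$, and you use a cut-off exponential in place of the $\sinh$ profile $\iota_\eps$ on $(0,2\sqrt{\kappa_\eps})$.
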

\begin{proof}
    We first prove the statement for $Q\in C^1(\bar{A})$. The general case follows by approximation.

    Fix $\delta>0$, and let $R_\delta$ and $\eta_\delta$ be given by Corollary~\ref{lem:boundary-profile-selection}. Throughout the proof, $o_\varepsilon^\delta(1)$ denotes a quantity that converges to zero as $\varepsilon\to0$ for every fixed $\delta>0$. The constants implicit in $O_\delta(\cdot)$ and $o_\varepsilon^\delta(1)$ may depend on $\delta$, but are independent of $\varepsilon$. For a shift $\tau$, define in tubular coordinates
\[
    \phi_\epsilon^\tau(\Psi(y,t))=
\begin{cases}
    1, & t\leq \tau-\epsilon R_\delta,\\[0.2em]
    \eta_\delta\!\left(y,\dfrac{t-\tau}{\epsilon}\right),& \tau-\epsilon R_\delta<t<\tau,\\[0.5em]
    0, & t\geq\tau.
\end{cases}
\]
We extend $\phi_\epsilon^\tau$ by $1$ in the remaining interior region and by $0$ in the exterior. The map $M_\epsilon(\tau):=\int_\Omega\phi_\epsilon^\tau\,dx$ is continuous and increasing. Using tubular coordinates and the change of variables $t=\tau+\varepsilon z$, we obtain
\begin{align*}
    M_\varepsilon(0)-V_0=\varepsilon\int_{\partial A}\int_{-R_\delta}^0\bigl(\eta_\delta(y,z)-1\bigr)J_A(y,\varepsilon z)\,dz\,d\mathcal H^2(y),
\end{align*}
and hence $M_\varepsilon(0)-V_0=O_\delta(\varepsilon).$ Moreover, for almost every sufficiently small $\tau$,
\begin{align*}
    M_\varepsilon'(\tau)=\int_{\partial A}\int_{-R_\delta}^0-\partial_z\eta_\delta(y,z)J_A(y,\tau+\varepsilon z)\,dz\,d\mathcal H^2(y).
\end{align*}
Since $\int_{-R_\delta}^0-\partial_z\eta_\delta(y,z)\,dz=1$ uniformly for $y\in\partial A$, and $J_A(y,\tau+\varepsilon z)=1+O_\delta(\varepsilon+|\tau|)$, we obtain
\begin{equation}\label{eq:volume-correction-expansion}
    M_\varepsilon(0)-V_0=O_\delta(\varepsilon),\qquad M_\varepsilon^\prime(\tau)=\mathcal H^2(\partial A)+O_\delta(\varepsilon+|\tau|)
\end{equation}
uniformly for $|\tau|$ sufficiently small. Since \(0\leq\eta_\delta\leq1\), formula \eqref{eq:volume-correction-expansion} gives $M_\varepsilon(0)\leq V_0$ and $0\leq V_0-M_\varepsilon(0)\leq C_\delta\varepsilon$. Moreover, after decreasing the tubular neighborhood if necessary, for all sufficiently small \(\varepsilon\) and \(|\tau|\leq\tau_0\), $M_\varepsilon'(\tau)\geq\mathcal H^2(\partial A)/2>0$ for almost every \(\tau\).

Choose \(C_\delta'>0\) so large that $\mathcal H^2(\partial A)\,C_\delta'/2>C_\delta$. Then, for all sufficiently small \(\varepsilon\),
\begin{align*}
    M_\varepsilon(C_\delta'\varepsilon)-V_0=M_\varepsilon(0)-V_0+\int_0^{C_\delta'\varepsilon}M_\varepsilon'(\tau)\,d\tau\geq-C_\delta\varepsilon+\frac12\mathcal H^2(\partial A)C_\delta'\varepsilon>0.
\end{align*}
Since \(M_\varepsilon\) is continuous, the intermediate value theorem yields $\tau_\varepsilon\in[0,C_\delta'\varepsilon]$ such that $M_\varepsilon(\tau_\varepsilon)=V_0$. In particular, \(\tau_\varepsilon=O_\delta(\varepsilon)\).

We then set $\phi_\epsilon:=\phi_\epsilon^{\tau_\epsilon}$. Since $Q\in C^1(\overline A;\mathcal S_0)$ and $\partial A$ is $C^2$, we fix a $C^1$ extension of $Q$ to a tubular neighborhood of $\partial A$. We continue to denote this extension by $Q$.

Assume first that $r\in(0,\infty]$. Let $\zeta\in C^\infty(\mathbb R;[0,1])$ satisfy $\zeta=0$ on $(-\infty,0]$ and $\zeta=1$ on $[1,\infty)$. In tubular coordinates, define
\[
    Q_\eps(\Psi(y,t))=
    \begin{cases}
    Q(\Psi(y,t)), & t\leq\tau_\eps-\eps(R_\delta+1),\\[0.3em]
    (1-\zeta_\eps(t))Q(\Psi(y,t)) +\zeta_\eps(t)Q^-(y), & \tau_\eps-\eps(R_\delta+1)<t<\tau_\eps-\eps R_\delta,\\[0.3em]
    Q^-(y), & \tau_\eps-\eps R_\delta \leq t\leq\tau_\eps,\\[0.3em]
    Q^-(y)\iota_\eps(t-\tau_\eps),& \tau_\eps<t<\tau_\eps+2a_\eps,\\[0.3em]
    0, & t\geq\tau_\eps+2a_\eps,
\end{cases}
\]
where $\zeta_\eps(t):=\zeta\left((t-\tau_\eps+\eps(R_\delta+1))/\eps\right)$ and $a_\eps:=\sqrt{\kappa_\eps}$. Define, for $0\leq t\leq2a_\varepsilon$,
\[
    \iota_\varepsilon(t):=\frac{\sinh\bigl((2a_\varepsilon-t)/\kappa_\varepsilon\bigr)}{\sinh\bigl(2a_\varepsilon/\kappa_\varepsilon\bigr)}.
\]
Then $\iota_\varepsilon(0)=1$, $\iota_\varepsilon(2a_\varepsilon)=0$, and
\begin{equation}\label{eq:exterior-relaxation-profile}
    \int_0^{2a_\varepsilon}\left(\kappa_\varepsilon|\iota_\varepsilon^\prime|^2+\kappa_\varepsilon^{-1}|\iota_\varepsilon|^2\right)\,dt=\coth\left(\frac{2a_\varepsilon}{\kappa_\varepsilon}\right)=1+o(1),
\end{equation}
because $a_\varepsilon/\kappa_\varepsilon=\kappa_\varepsilon^{-1/2}\to\infty$.

If $r=0$, then finite limiting energy implies $Q^-=0$ $\mathcal H^2-$ a.e. on $\partial A$. Choose $h_\eps\downarrow0$ such that $\eps+|\tau_\eps|=o(h_\eps)$, and set
\[
    Q_\eps(x):=
    \begin{cases}
        \xi\left(-d_A(x)/h_\eps\right)Q(x), &x\in A,\\
        0, &x\in A^c,
    \end{cases}
\]
where $\xi\in C^{\infty}([0,\infty);[0,1])$ satisfies $\xi=0$ near $0$ and $\xi=1$ on $[1,\infty)$.

For $r>0$, we extend $Q_\varepsilon$ by $Q$ in the remaining interior region and by $0$ in the remaining exterior region.

In either case, the pieces match continuously across their common boundaries. Hence $(Q_\varepsilon,\phi_\varepsilon)\in H^1(\Omega;\mathcal S_0)\times H_0^1(\Omega)$. Moreover, $0\leq\phi_\varepsilon\leq1$ and $\int_\Omega\phi_\varepsilon\,dx=V_0$ by the choice of $\tau_\varepsilon$. Therefore, $(Q_\varepsilon,\phi_\varepsilon)\in\mathcal A_\varepsilon$.

For $r\in(0,\infty]$, introduce the three tubular regions
\begin{align*}
    I_\varepsilon^\delta&:=\Psi\Bigl(\bigl\{(y,t)\in\partial A\times\mathbb R:\tau_\varepsilon-\varepsilon R_\delta<t<\tau_\varepsilon\bigr\}\Bigr),\\
    J_\varepsilon^\delta&:=\Psi\Bigl(\bigl\{(y,t)\in\partial A\times\mathbb R:\tau_\varepsilon-\varepsilon(R_\delta+1)<t<\tau_\varepsilon-\varepsilon R_\delta\bigr\}\Bigr),\\
    O_\varepsilon&:=\Psi\Bigl(\bigl\{(y,t)\in\partial A\times\mathbb R:\tau_\varepsilon<t<\tau_\varepsilon+2a_\varepsilon\bigr\}\Bigr).
\end{align*}
Here $I_\varepsilon^\delta$ contains the phase-field transition, $J_\varepsilon^\delta$ is the inner matching region, and $O_\varepsilon$ is the exterior tensor-relaxation region.

All modifications of $\phi_\varepsilon$ are confined to $I_\varepsilon^\delta$, while the modifications of $Q_\varepsilon$ are confined to $J_\varepsilon^\delta\cup I_\varepsilon^\delta\cup O_\varepsilon$.

If $r\in(0,\infty]$, all modifications are supported in $J_\varepsilon^\delta\cup I_\varepsilon^\delta\cup O_\varepsilon$, whose volume tends to zero. Hence
\[
    \phi_\varepsilon\to\chi_A\quad\text{strongly in }L^1(\Omega),\qquad Q_\varepsilon\to Q\quad\text{strongly in }L^2(\Omega;\mathcal S_0).
\]

If $r=0$, the phase transition is again supported in a shrinking tubular neighborhood, while $Q_\varepsilon-Q$ is supported in an interior layer of width $O(h_\varepsilon)$. The same convergences therefore hold.

We next estimate the energy. Since $Q$ and its extension are $C^1$, $Q^-(y)-Q(\Psi(y,t))=O(|t|)$ uniformly near $\partial A$. On $J_\varepsilon^\delta$, $|t|=O_\delta(\varepsilon)$, and hence $|\zeta_\varepsilon'(t)|\,\left|Q^-(y)-Q(\Psi(y,t))\right|\leq C_\delta.$ Thus $\nabla Q_\varepsilon$ is uniformly bounded on $J_\varepsilon^\delta\cup I_\varepsilon^\delta$, and these sets have volume $O_\delta(\varepsilon)$. It follows that
\begin{equation}\label{eq:bulk-upper}
    E_\varepsilon^{\rm bulk}(Q_\varepsilon,\phi_\varepsilon)=\int_A\left(|\nabla Q|^2+\frac{1}{2\lambda}F_b(Q)\right)\,dx+o_\varepsilon^\delta(1).
\end{equation}

We now evaluate the energy on $I_\varepsilon^\delta$. There, $Q_\varepsilon(\Psi(y,t))=Q^-(y)$ and $\phi_\varepsilon(\Psi(y,t))=\eta_\delta\left(y,\frac{t-\tau_\varepsilon}{\varepsilon}\right).$ Using the change of variables $z=\frac{t-\tau_\varepsilon}{\varepsilon}$, the tubular-coordinate formula, and $J_A(y,\tau_\varepsilon+\varepsilon z)=1+o_\varepsilon^\delta(1)$ uniformly for $(y,z)\in\partial A\times[-R_\delta,0]$, we obtain
\begin{equation}\label{eq:mix-void-I}
\begin{aligned}
    &E_\varepsilon^{\rm mix}(Q_\varepsilon,\phi_\varepsilon;I_\varepsilon^\delta)+E_\varepsilon^{\rm void}(Q_\varepsilon,\phi_\varepsilon;I_\varepsilon^\delta)\\
    &=\int_{\partial A}\int_{-R_\delta}^0\Biggl[w_pG(Q^-,\nu_A)^2|\partial_z\eta_\delta|^2+w_pW(\eta_\delta)+\frac{w_v}{2r_\varepsilon}|Q^-|^2(1-\eta_\delta)^2\Biggr]\,dz\,d\mathcal H^2+o_\varepsilon^\delta(1).
\end{aligned}
\end{equation}
In tubular coordinates,
\[
    \nabla\phi_\varepsilon=\frac{1}{\varepsilon}\partial_z\eta_\delta\,\nu_A+\nabla_\tau\eta_\delta+O_\delta(\varepsilon),
\]
where $\nabla_\tau$ denotes the tangential gradient along $\partial A$. Since $\eta_\delta$ is uniformly Lipschitz for fixed $\delta$, after integration over the layer $I_\varepsilon^\delta$, the purely tangential terms are $O_\delta(\varepsilon^2)$ and the normal--tangential cross terms are $O_\delta(\varepsilon)$. Moreover, $Q_\varepsilon=Q^-(y)$ on $I_\varepsilon^\delta$, so that
\[
    \kappa_\varepsilon\int_{I_\varepsilon^\delta}|\nabla Q_\varepsilon|^2\,dx=O_\delta(\kappa_\varepsilon\varepsilon)=o_\varepsilon^\delta(1).
\]
Thus all the omitted terms in \eqref{eq:mix-void-I} are $o_\varepsilon^\delta(1)$.

If $r=\ell\in(0,\infty)$, then
$r_\varepsilon^{-1}\to\ell^{-1}$; if $r=\infty$, the last integrand in \eqref{eq:mix-void-I} converges uniformly to zero. Therefore, Corollary~\ref{lem:boundary-profile-selection} yields
\begin{equation}\label{eq:mix-void-I-2}
    \limsup_{\varepsilon\to0}\Bigl(E_\varepsilon^{\rm mix}(I_\varepsilon^\delta)+E_\varepsilon^{\rm void}(I_\varepsilon^\delta)\Bigr)\leq\int_{\partial A}m_r(Q^-,\nu_A)\,d\mathcal H^2+\delta.
\end{equation}

On $O_\varepsilon$, one has $\phi_\varepsilon=0$ and $Q_\varepsilon(\Psi(y,\tau_\varepsilon+s))=Q^-(y)\iota_\varepsilon(s)$ for $0<s<2a_\varepsilon$. Consequently,
\begin{equation}\label{eq:void-O}
    E_\varepsilon^{\rm void}(O_\varepsilon)=\frac{w_v}{2}\int_{\partial A}|Q^-|^2\int_0^{2a_\varepsilon}\left(\kappa_\varepsilon|\iota_\varepsilon'|^2+\kappa_\varepsilon^{-1}|\iota_\varepsilon|^2\right)J_A(y,\tau_\varepsilon+s)\,ds\,d\mathcal H^2+o_\varepsilon(1).
\end{equation}
The error in \eqref{eq:void-O} contains the tangential derivatives of $Q^-$. Since $Q^-\in C^1(\partial A;\mathcal S_0)$ and $a_\varepsilon=\sqrt{\kappa_\varepsilon}$,
\begin{align*}
    \kappa_\varepsilon\int_{O_\varepsilon}|\nabla_\tau Q_\varepsilon|^2\,dx\le C\kappa_\varepsilon\int_0^{2a_\varepsilon}|\iota_\varepsilon(s)|^2\,ds \le C\kappa_\varepsilon a_\varepsilon=C\kappa_\varepsilon^{3/2}\to 0.
\end{align*}
Furthermore,
\[
    \sup_{\substack{y\in\partial A\\0<s<2a_\varepsilon}} |J_A(y,\tau_\varepsilon+s)-1|\to0.
\]
Therefore, \eqref{eq:exterior-relaxation-profile} implies
\begin{equation}\label{eq:void-O-2}
    \lim_{\varepsilon\to0}E_\varepsilon^{\rm void}(O_\varepsilon)=\frac{w_v}{2}\int_{\partial A}|Q^-|^2\,d\mathcal H^2.
\end{equation}

Outside $I_\varepsilon^\delta$, the phase field is constant, so the mixing energy vanishes. The void potential is supported in $I_\varepsilon^\delta\cup O_\varepsilon$, while all remaining void-gradient contributions are $o_\varepsilon^\delta(1)$ because $\kappa_\varepsilon\to0$.

Combining \eqref{eq:bulk-upper}, \eqref{eq:mix-void-I-2}, and \eqref{eq:void-O-2}, we obtain, for $r\in(0,\infty]$,
\[
    \limsup_{\varepsilon\to0}E_{\varepsilon,\kappa_\varepsilon}(Q_\varepsilon,\phi_\varepsilon)\leq E_0^r(Q,A)+\delta.
\]

It remains to consider $r=0$. Since $Q\in C^1(\overline A;\mathcal S_0)$ and $Q^-=0$, continuity of the trace gives $Q=0$ on $\partial A$. Consequently,
\[
    |Q(\Psi(y,t))|\leq C|t|
\]
for $t<0$ sufficiently close to zero. Hence $\{\nabla Q_\varepsilon\}$ is uniformly bounded in the cutoff region, and
\begin{equation}\label{eq:bulk-upper-1}
    E_\varepsilon^{\rm bulk}(Q_\varepsilon,\phi_\varepsilon)=\int_A\left(|\nabla Q|^2+\frac{1}{2\lambda}F_b(Q)\right)\,dx+o_\varepsilon(1).
\end{equation}

Because $\varepsilon+|\tau_\varepsilon|=o(h_\varepsilon)$, the field $Q_\varepsilon$ vanishes on the support of $\nabla\phi_\varepsilon$ for all sufficiently small $\varepsilon$. Therefore,
\begin{align*}
    E_\varepsilon^{\rm mix}(Q_\varepsilon,\phi_\varepsilon)&=\int_{\partial A}\int_{-R_\delta}^0w_p\left[G(0,\nu_A)^2|\partial_z\eta_\delta|^2+W(\eta_\delta)\right]\,dz\,d\mathcal H^2+o_\varepsilon^\delta(1)\\
    &\leq\int_{\partial A}m_0(0,\nu_A)\,d\mathcal H^2+\delta+o_\varepsilon^\delta(1).
\end{align*}
Furthermore,
\begin{equation}\label{eq:void-upper}
    E_\varepsilon^{\rm void}(Q_\varepsilon,\phi_\varepsilon)=o_\varepsilon(1).
\end{equation}
Indeed, since $\varepsilon+|\tau_\varepsilon|=o(h_\varepsilon)$, the support of $Q_\varepsilon$ is separated from the phase-transition layer. Thus $\phi_\varepsilon=1$ wherever $Q_\varepsilon\neq0$, and the void potential vanishes. Moreover, the cutoff construction gives
\[
    \sup_{\varepsilon>0}\int_\Omega|\nabla Q_\varepsilon|^2\,dx<+\infty,
\]
so the void-gradient contribution is $o(1)$ because $\kappa_\varepsilon\to0$.

Combining \eqref{eq:bulk-upper-1}--\eqref{eq:void-upper} gives
\[
    \limsup_{\varepsilon\to0}E_{\varepsilon,\kappa_\varepsilon}(Q_\varepsilon,\phi_\varepsilon)\leq E_0^0(Q,A)+\delta.
\]

Choose $\delta_k\downarrow0$ and then $\varepsilon_k\downarrow0$ such that, for every $0<\varepsilon\le\varepsilon_k$, all the preceding estimates with $\delta=\delta_k$ have total error at most $2\delta_k$. Defining the recovery sequence with $\delta=\delta_k$ for $\varepsilon\in(\varepsilon_{k+1},\varepsilon_k]$ gives the required recovery sequence for smooth $Q$.

For a general $Q\in H^1(A;\mathcal S_0)$, assume first that $r>0$. Choose $Q_j\in C^1(\overline A;\mathcal S_0)$ such that $Q_j\to Q$ strongly in $H^1(A;\mathcal S_0)$. Since $H^1(A)\hookrightarrow L^4(A)$ in dimension three, $F_b(Q_j)\to F_b(Q)$ strongly in $L^1(A)$. Moreover, the trace theorem gives $Q_j^-\to Q^-$ strongly in $L^2(\partial A;\mathcal S_0)$. In particular, \(Q_j^-\to Q^-\) in measure on \(\partial A\). Since \(\gamma_r\) is continuous for \(r\in(0,\infty]\), we
have $\gamma_r(Q_j^-,\nu_A)\to\gamma_r(Q^-,\nu_A)$ in measure on \(\partial A\).

Furthermore, the quadratic-growth estimate $0\leq\gamma_r(q,\nu)\leq C_r(1+|q|^2)$ holds uniformly for \(\nu\in\mathbb S^2\). Strong \(L^2\)-convergence gives
\begin{align*}
    \bigl\||Q_j^-|^2-|Q^-|^2\bigr\|_{L^1(\partial A)}\leq\bigl(\|Q_j^-\|_{L^2(\partial A)}+\|Q^-\|_{L^2(\partial A)}\bigr) \|Q_j^--Q^-\|_{L^2(\partial A)}\to0.
\end{align*}
Consequently, the family \(\{\gamma_r(Q_j^-,\nu_A)\}_j\) is uniformly integrable on \(\partial A\). Vitali's theorem therefore yields strong $L^1$ convergence. Together with the bulk convergence, this proves
\[
    E_0^r(Q_j,A)\to E_0^r(Q,A).
\]

If $r=0$, finite energy gives $Q^-=0$, and hence $Q\in H_0^1(A;\mathcal S_0)$. In this case, choose $Q_j\in C_c^\infty(A;\mathcal S_0)$ such that $Q_j\to Q$ strongly in $H^1(A;\mathcal S_0)$.

Finally, choose $j(k)\to\infty$ so that
\[
    \|Q_{j(k)}-Q\|_{H^1(A)}+\bigl|E_0^r(Q_{j(k)},A)-E_0^r(Q,A)\bigr|\le\frac1k,
\]
and apply the preceding smooth recovery construction to $Q_{j(k)}$, with diffuse-interface error at most $1/k$. A final diagonal selection yields the asserted recovery sequence.
\end{proof}

\section{Conclusion}\label{sec:conclusion}
We have identified the simultaneous sharp-interface limit within a regular-interface class. The ratio $r_\eps=\kap/\eps$ selects the surface law: separated layers recover the iterated limit when $r_\eps\to\infty$, comparable layers produce a coupled density when $r_\eps\to \ell\in(0,\infty)$, and $r_\eps\to 0$ forces the interior trace of Q to vanish on the interface.

The proof combines truncated-$SBV^2$ compactness, identification of a $Q_\eps$-dependent transition measure, and a one-dimensional capacity comparison for exterior incursions. The present result assumes $Q \in H^1(A; \mathcal{S}_0)$ and $\partial A\in C^2$. Extending it to finite-perimeter droplets and $SBV^2$ tensor fields, including the effective density of possible internal jumps, remains open.

\appendix
\section{Stability of the one-dimensional capacity}
\begin{lemma}[Stability of the one-dimensional capacity]
\label{lem:capacity-stability}
Let \(H_n,H\subset(0,\delta)\) be measurable sets such that $\chi_{H_n}\to\chi_H$ strongly in $L^1(0,\delta)$. For fixed $\alpha_0,c,\kappa>0$ and $\eta\in(0,1/2)$, define
\[
    a_n:=\alpha_0\chi_{H_n}+c\kappa(1-\chi_{H_n}),\qquad b_n:=c\kappa^{-1}(1-\eta)^2(1-\chi_{H_n}),
\]
and
\[
    a:=\alpha_0\chi_H+c\kappa(1-\chi_H),\qquad b:=c\kappa^{-1}(1-\eta)^2(1-\chi_H).
\]
For \(q_0\in\mathbb R^m\), set
\[
    C_{H_n}(q_0):=\inf_{\substack{q\in H^1(0,\delta;\mathbb R^m)\\ q(0)=q_0}}\int_0^\delta\left(a_n|q'|^2+b_n|q|^2\right)\,dt,
\]
and define \(C_H(q_0)\) analogously. Then $C_{H_n}(q_0)\to C_H(q_0)$.
\end{lemma}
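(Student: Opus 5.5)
The plan is to prove that $C_{H_n}(q_0)\to C_H(q_0)$ by establishing the two inequalities $\limsup_n C_{H_n}(q_0)\le C_H(q_0)$ and $\liminf_n C_{H_n}(q_0)\ge C_H(q_0)$ separately. The basic observation is that all coefficients are uniformly bounded above and below:
\[
\min\{\alpha_0,c\kappa\}\le a_n,a\le \max\{\alpha_0,c\kappa\},\qquad 0\le b_n,b\le c\kappa^{-1}.
\]
Consequently every functional $\int_0^\delta(a_n|q'|^2+b_n|q|^2)\,dt$ is coercive on $\{q\in H^1(0,\delta;\mathbb R^m):q(0)=q_0\}$. Indeed, the lower bound on $a_n$ controls $\|q'\|_{L^2}$, and together with $q(0)=q_0$ this controls $\|q\|_{H^1}$. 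As in the main lemma, one may work componentwise, but this is not necessary.

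\emph{Upper bound.} Fix a minimizer $q$ for $C_H(q_0)$. It exists by the direct method, since the functional is convex and coercive. Because $|q'|^2$ and $|q|^2$ are integrable, dominated convergence gives
\[
\int_0^\delta\bigl(a_n|q'|^2+b_n|q|^2\bigr)\,dt\to\int_0^\delta\bigl(a|q'|^2+b|q|^2\bigr)\,dt .
\]
Here we pass to a subsequence on which $\chi_{H_n}\to\chi_H$ a.e.; to avoid subsequences, one can instead use the bound $|a_n-a|\le C|\chi_{H_n}-\chi_H|$ together with uniform integrability of $|q'|^2$. Either way we get $\limsup_n C_{H_n}(q_0)\le C_H(q_0)$, using $q$ as a competitor for every $n$.

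\emph{Lower bound.} This step carries the real content. Take minimizers $q_n$ for $C_{H_n}(q_0)$. By the upper bound and coercivity, $(q_n)$ is bounded in $H^1$. After extracting a subsequence realizing the $\liminf$, we have $q_n\rightharpoonup q$ weakly in $H^1$ and $q_n\to q$ uniformly, and $q(0)=q_0$.

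For the zeroth-order term, uniform convergence of $q_n$ and $b_n\to b$ in $L^1$ give $\int b_n|q_n|^2\to\int b|q|^2$.

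The gradient term has a weak–strong product structure with a varying coefficient. We handle it by writing $\sqrt{a_n}\,q_n'\rightharpoonup\sqrt a\,q'$ weakly in $L^2$. To justify this, note that $\sqrt{a_n}\to\sqrt a$ in every $L^p$, $p<\infty$, with a uniform $L^\infty$ bound. Hence $(\sqrt{a_n}-\sqrt a)q_n'\to0$ when tested against $L^\infty$ functions: for $\psi\in L^\infty$,
\[
\Bigl|\int(\sqrt{a_n}-\sqrt a)\,q_n'\cdot\psi\Bigr|\le\|\psi\|_\infty\|\sqrt{a_n}-\sqrt a\|_{L^2}\|q_n'\|_{L^2}\to0 .
\]
Moreover, $\sqrt a\,q_n'\rightharpoonup\sqrt a\,q'$ weakly in $L^2$, and boundedness in $L^2$ upgrades the test-function convergence to weak $L^2$ convergence. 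Weak lower semicontinuity of the $L^2$ norm then yields
\[
\liminf_n\int a_n|q_n'|^2\ge\int a|q'|^2 .
\]
Combining the two terms gives $\liminf_n C_{H_n}(q_0)\ge\int_0^\delta(a|q'|^2+b|q|^2)\,dt\ge C_H(q_0)$.

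The only delicate point is the varying coefficient $a_n$ in front of a weakly converging gradient. This is precisely what the splitting $\sqrt{a_n}\,q_n'$ together with $\sqrt{a_n}\to\sqrt a$ strongly handles. Since every subsequence has a further subsequence along which the limit equals $C_H(q_0)$, the full sequence converges.
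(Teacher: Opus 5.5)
Your proposal is correct and follows the paper's proof essentially step by step. For the upper bound you use a minimizer of the limit functional as a competitor, with the same uniform-integrability device to pass to the limit in $\int a_n|\bar q'|^2$. For the lower bound you use $H^1$-bounded minimizers $q_n$, uniform convergence to handle the $b_n$-term, and the weak convergence $\sqrt{a_n}\,q_n'\rightharpoonup\sqrt a\,q'$ in $L^2$. The only difference is cosmetic: you justify that weak convergence via $\|\sqrt{a_n}-\sqrt a\|_{L^2}\to0$ against $L^\infty$ test functions plus density, whereas the paper passes to an a.e.-convergent subsequence and shows $\sqrt{a_n}\,\psi\to\sqrt a\,\psi$ strongly in $L^2$ for every $\psi\in L^2$.
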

\begin{proof}
Since the functional is diagonal in the components of \(q\), it is enough to prove the result in the scalar case.

Let $X_{q_0}:=\{q\in H^1(0,\delta):q(0)=q_0\}$ and define
\[
    J_n(q):=\int_0^\delta\left(a_n|q'|^2+b_n|q|^2\right)\,dt,\qquad J(q):=\int_0^\delta\left(a|q'|^2+b|q|^2\right)\,dt.
\]
Then $C_{H_n}(q_0)=\min_{q\in X_{q_0}}J_n(q)$ and $C_H(q_0)=\min_{q\in X_{q_0}}J(q)$.

Let \(q_n\in X_{q_0}\) be a minimizer of \(J_n\). Using the constant function \(q\equiv q_0\) as a competitor gives $J_n(q_n)\le J_n(q_0)\le C|q_0|^2$. Since $a_n(t)\ge a_*:=\min\{\alpha_0,c\kappa\}>0$, we obtain
\[
    \int_0^\delta |q_n'|^2\,dt\le C.
\]
Together with \(q_n(0)=q_0\), this yields $\|q_n\|_{H^1(0,\delta)}\le C$. Hence, after passing to a subsequence, $q_n\rightharpoonup q$ weakly in $H^1(0,\delta)$ and $q_n\to q$ uniformly on $[0,\delta]$. In particular, \(q\in X_{q_0}\).

After passing to a further subsequence, we may assume that $a_n(t)\to a(t)$ and $b_n(t)\to b(t)$ for almost every \(t\in(0,\delta)\). Since \(a_n\) is uniformly bounded, for every \(\psi\in L^2(0,\delta)\), $\sqrt{a_n}\,\psi\to\sqrt a\,\psi$ strongly in $L^2(0,\delta)$. Therefore,
\[
    \sqrt{a_n}\,q_n'\rightharpoonup\sqrt a\,q'\qquad\text{weakly in }L^2(0,\delta),
\]
and weak lower semicontinuity gives
\[
    \int_0^\delta a|q'|^2\,dt\le\liminf_{n\to\infty}\int_0^\delta a_n|q_n'|^2\,dt.
\]

Moreover, the uniform convergence of \(q_n\), together with \(b_n\to b\) strongly in \(L^1(0,\delta)\), implies
\[
    \int_0^\delta b_n|q_n|^2\,dt\to\int_0^\delta b|q|^2\,dt.
\]
Consequently, $J(q)\le\liminf_{n\to\infty}J_n(q_n)$ and hence $C_H(q_0)\le\liminf_{n\to\infty}C_{H_n}(q_0)$.

For the reverse inequality, let $\bar q\in X_{q_0}$ be a minimizer of $J$. We first show that
\[
    \int_0^\delta a_n|\bar q'|^2\,dt\to\int_0^\delta a|\bar q'|^2\,dt.
\]
Set $a^*:=\max\{\alpha_0,c\kappa\}$. Then $|a_n-a|\le 2a^*$ a.e. in $(0,\delta)$. For every $K>0$, writing $f:=|\bar q'|^2\in L^1(0,\delta)$, we obtain
\begin{align*}
    \int_0^\delta |a_n-a|f\,dt\le K\|a_n-a\|_{L^1(0,\delta)}+2a^*\int_{\{f>K\}}f\,dt.
\end{align*}
For fixed $K$, the first term converges to zero as $n\to\infty$. The second term converges to zero as $K\to\infty$, by the absolute continuity of the integral of $f$. Therefore,
\[
    \int_0^\delta a_n|\bar q'|^2\,dt\to\int_0^\delta a|\bar q'|^2\,dt.
\]

Moreover, since $\bar q\in H^1(0,\delta)\hookrightarrow C^0([0,\delta])$, we have
\begin{align*}
    \left|\int_0^\delta(b_n-b)|\bar q|^2\,dt\right|\le\|\bar q\|_{L^\infty(0,\delta)}^2\|b_n-b\|_{L^1(0,\delta)}\to0.
\end{align*}
Consequently, $J_n(\bar q)\longrightarrow J(\bar q)$. Using $\bar q$ as a competitor for $J_n$, we obtain
\[
    \limsup_{n\to\infty} C_{H_n}(q_0)\le\lim_{n\to\infty}J_n(\bar q)=J(\bar q)=C_H(q_0).
\]
\end{proof}

\section*{Acknowledgments}
The authors thank Baoming Shi and Dawei Wu for helpful discussions.
\bibliographystyle{siamplain}
\bibliography{references}
\end{document}